\def\bullet{{\mbox{\tiny$\square$}}}
\newcommand{\rrvert}{\vert}
\newcommand{\llvert}{\vert}
\newtheorem{theorem}{Theorem}[section]
\newtheorem{lemma}[theorem]{Lemma}
\newtheorem{corollary}[theorem]{Corollary}
\newcommand{\Z}{\mathbb{Z}}
\newcommand{\R}{\mathbb{R}}
\newcommand{\E}{\mathbf{E}}
\newcommand{\cov}{\operatorname{cov}}
\def\cdt{ \cdot}
\def\ov{\overline}
\def\t{\theta}
\def\D{\Delta}
\def\L{\Lambda}
\def\s{\sigma}
\def\b{\beta}
\def\a{\alpha}
\def\g{\gamma}
\def\P{\mathbf{P} }
\def\O{\mathcal{O}}
\def\na{\mapsto}
\def\={\stackrel{\mathrm{def}}{=}}
\def\ovln#1{ {\overline{ #1}}}
\newcommand{\eqref}[1]{(\ref{#1})}
\begin{document}
\begin{frontmatter}

\title{Explicit rates of approximation in the CLT for quadratic forms}
\runtitle{Explicit rates of approximation in the CLT}

\begin{aug}
\author[A]{\fnms{Friedrich} \snm{G\"otze}\corref{}\thanksref{t1}\ead[label=e1]{goetze@math.uni-bielefeld.de}}
\and
\author[B]{\fnms{Andrei Yu.} \snm{Zaitsev}\thanksref{t1,t2}\ead[label=e2]{zaitsev@pdmi.ras.ru}}
\thankstext{t1}{Supported by the SFB 701 in Bielefeld and by Grant
RFBR-DFG 09-01-91331.}
\thankstext{t2}{Supported by Grants NSh-1216.2012.01, RFBR
09-01-12180, 10-01-00242 and 11-01-12104, by the Alexander von
Humboldt Foundation and by a program of fundamental researches of
Russian Academy of Sciences ``Modern problems of fundamental
mathematics.''}
\runauthor{F.~G\"otze and A.~Yu. Zaitsev}
\affiliation{Universit\"at Bielefeld and St.~Petersburg Department of
Steklov Mathematical Institute}
\address[A]{Fakult\"at f\"ur Mathematik\\
Universit\"at Bielefeld\\
Postfach 100131, D-33501\\
Bielefeld\\
Germany\\
\printead{e1}} %adresu isvedimo komanda gale!
\address[B]{St.~Petersburg Department\\
\quad of V. A. Steklov Mathematical Institute\\
Fontanka 27\\
St.~Petersburg 191023\\
Russia\\
\printead{e2}}
\end{aug}

% HISTORY:
\received{\smonth{12} \syear{2011}}
\revised{\smonth{2} \syear{2013}}

% ABSTRACT
%
\begin{abstract}
Let $X, X_1, X_2, \ldots$ be {i.i.d.}
${\mathbb R}^d$-valued real random vectors. Assume that ${\mathbf{E}
X=0}$, $\cov X =\mathbb C$,
$\mathbf{E} \Vert X\Vert^2=\s^2 $ and
that $X $ is
not concentrated in a proper subspace
of $\mathbb R^d$. Let $G $ be a mean zero Gaussian random vector
with
the same covariance operator as that of~$X$.
We study the distributions of nondegenerate quadratic forms
$ \mathbb Q [S_N ] $ of the normalized sums~${S_N=N^{-1/2} (X_1+\cdots+X_N)} $
and show that,
without any additional conditions,
\[
\Delta_N\= \sup_x \bigl|\mathbf{P} \bigl\{ \mathbb
Q [S_N]\leq x \bigr\}- \mathbf{P} \bigl\{ \mathbb Q [G ]\leq x
\bigr\} \bigr| = {\mathcal O} \bigl(N^{-1} \bigr),
\]
provided that $d\geq5 $ and
the fourth moment of $X $ exists.
Furthermore, we provide explicit bounds of order~${\mathcal O}
(N^{-1} )$
for $\Delta_N$ for the rate of approximation by short asymptotic
expansions and for the concentration functions of the random
variables $\mathbb Q [S_N+a ]$, $a\in{\mathbb R}^d$. The order of the
bound is optimal.
It extends previous
results of Bentkus and G\"otze [\emph{Probab. Theory Related Fields}
\textbf{109} (1997a)
367--416]
(for ${d\ge9}$) to the case $d\ge5$,
which is the smallest possible dimension for such a bound.
Moreover, we show that, in {the} finite
dimensional case and for isometric~$ \mathbb Q $, the {implied}
constant in
${\mathcal O} (N^{-1} )$ has the form $c_d
\s^d (\det\mathbb C)^{-1/2} \E\|\mathbb C^{-1/2} X\|^4$ with
some $c_d$ depending on $d$ only.
This answers a long standing question about optimal rates in
the central limit theorem for quadratic forms starting with a
seminal paper by Ess\'een [\emph{Acta Math.} \textbf{77} (1945) 1--125].
\end{abstract}

% KEYWORDS
% Pirmas kwd is didziosios raides
%
\begin{keyword}[class=AMS]
\kwd[Primary ]{60F05}
\kwd[; secondary ]{62E20}
\end{keyword}
\begin{keyword}
\kwd{Central Limit theorem}
\kwd{concentration functions}
\kwd{convergence rates}
\kwd{multidimensional spaces}
\kwd{quadratic forms}
\kwd{ellipsoids}
\kwd{hyperboloids}
\kwd{lattice point problem}
\kwd{theta-series}
\end{keyword}

\end{frontmatter}

%s1 #&#
\section{Introduction}
\label{s1}

Let $\mathbb R^d $ be the $d$-dimensional space of real vectors
$x=(x_1,\ldots,x_d) $ with scalar product $\langle x,y\rangle
=x_1y_1+\cdots+x_dy_d$ and norm $\|x\|= \langle x,x\rangle^{1/2}$.
We also denote by $\mathbb R^\infty$ a separable Hilbert space
consisting of all real sequences ${x=(x_1,x_2,\ldots)}$ such that
$\|x\|^2= x_1^2+x_2^2+\cdots<\infty$.

Let $X,X_1,X_2,\ldots$ be a sequence of i.i.d. $\mathbb R^d$-valued
random vectors. Assume that ${\E X= 0}$ and $\s^2\=\E
\|X\|^2<\infty$. Let $G$ be a mean zero Gaussian random vector
such that its covariance operator $\mathbb C= \cov G\dvtx\mathbb R^d
\to
\mathbb R^d $
is equal to $\cov X$. It is well known that the distributions
$\mathcal L(S_N)$ of sums
%
%e1.1 #&#
%
\begin{equation}
\label{SN}S_N\=N^{-1/2} ( X_1+\cdots+
X_N)
\end{equation}
converge weakly to $\mathcal L(G)$.

Let $\mathbb Q\dvtx\mathbb R^d\to\mathbb R^d$ be a linear symmetric
bounded operator, and let $\mathbb Q [x]= \langle\mathbb Q
x,x\rangle$ be the corresponding quadratic form. We say that
$\mathbb Q $ is nondegenerate if $\ker\mathbb Q= \{
0 \}$.

Denote, for $q> 0$,
\[
\b_q\=\E\|X\|^q,\qquad \b\=\b_4.
\]
Introduce the distribution functions
%
%e1.2 #&#
%
\begin{equation}
F (x) \= \P\bigl\{ \mathbb Q [S_N]\leq x \bigr\}, \qquad H(x) \= \P
\bigl\{ \mathbb Q [G]\leq x \bigr\}.\label{eq11j}
\end{equation}
Write
%
%e1.3 #&#
%
\begin{equation}
\label{eqdel}\D_N\= \sup_{x\in\mathbb R} \bigl| F(x) - H(x) \bigr|.
\end{equation}

%th1 #&#
%
\begin{theorem}{\label{T11}} Assume that
$\mathbb Q $ and $\mathbb C $ are nondegenerate and that
$d\geq5$ or $d = \infty$.
Then
\[
\D_N \leq c ( \mathbb Q, \mathbb C ) \b/N.
\]
The constant $c(\mathbb Q, \mathbb C )$ in this bound depends
on $ \mathbb Q $ and $ \mathbb C $ only.
\end{theorem}

%th2 #&#
%
\begin{theorem}{\label{T11a}} Let the conditions of
Theorem \ref{T11} be satisfied, and let $5\le d<\infty$.
Assume that the operator $\mathbb Q$ is isometric. Then
\[
\D_N \leq c_d \s^d (\det\mathbb
C)^{-1/2} \E\bigl\|\mathbb C^{-1/2} X\bigr\|^4 /N.
\]
The constant $c_d$ in this bound depends
on $d$ only.
\end{theorem}

Theorems \ref{T11} and \ref{T11a} are simple consequences of
the main result of this paper, Theorem \ref{T15}; see also
Theorem \ref{T13}. Theorem \ref{T11} was proved in G\"otze and
Zaitsev (\citeyear{GotZai08}). It confirms a conjecture of Bentkus and G{\"o}tze
(\citeyear{BenGot97N1}) [below BG (\citeyear{BenGot97N1})]. It
generalizes to the case $d\ge5$ the
corresponding result of BG (\citeyear{BenGot97N1}). In their Theorem
1.1, it was
assumed that $d\ge9$, while our Theorem \ref{T11} is proved for
$d\ge5$. Theorem \ref{T11a} yields an explicit bound in terms
of the distribution $\mathcal L(X)$.

The distribution function of $ \|S_N\|^2 $ (for bounded $X$
with values in $\mathbb R^d$) may have jumps of order $\O
(N^{-1} )$, for all $1\leq d\leq\infty$; see, for example,
BG [(\citeyear{BenGot97N1}), page~468]. Therefore, the bounds of
Theorems \ref{T11}
and \ref{T11a} are optimal with respect to the order in $N$.

Theorems \ref{T11}, \ref{T11a} and the method of their proof are closely
related to the lattice point problem in number theory. Suppose
that $d<\infty$ and that $\langle\mathbb Q x,x\rangle>0$,
for $x\ne0$.
Let $\mathrm{vol} E_r $ be the volume of the ellipsoid
\[
E_r = \bigl\{ x\in\mathbb R^d\dvtx\mathbb Q [x] \leq
r^2 \bigr\} \qquad\mbox{for } r\geq0.
\]
Write $\mathrm{vol}_{\mathbb Z} E_r $ for the number of points in
$E_r\cap\mathbb Z^d$, where $ \mathbb Z^d \subset\mathbb\R^d$ is the
standard lattice of
points with integer coordinates.

The following result due to G\" otze (\citeyear{Got04}) is related to Theorems
\ref{T11} and \ref{T11a}; see also BG (\citeyear{BenGet95,BenGot97N2}).

%
%th3 #&#
%
\begin{theorem}\label{T12}
For all dimensions $ d\geq5$,
\[
\sup_{a\in\mathbb R^d} \biggl\llvert\frac{ \mathrm{vol}
_{\mathbb Z} (E_r+a) -\mathrm{vol} E_r }{ \mathrm{vol} E_r
} \biggr\rrvert= \O
\bigl(r^{-2}\bigr)\qquad \mbox{for } r\geq1,
\]
where the constant in $\O(r^{-2})$ depends on the dimension
$d$ and on the lengths of axes of the ellipsoid $E_1$ only.
\end{theorem}

Theorem \ref{T12} solves the lattice point problem for $d\geq5$.
It improves the classical estimate $ \O( r^{-2d/(d+1) } ) $ due to
\citet{Lan15}, just as Theorem \ref{T11} improves the bound
${\O(N^{-d/(d+1)})}$ by Ess\'een (\citeyear{Ess45}) in the CLT for ellipsoids
with axes parallel to coordinate axes. A related result for
indefinite forms may be found in G\" otze and Margulis (\citeyear{GotMar10}).

Work on the
estimation of the rate of approximation under the
conditions of Theorem \ref{T11} for Hilbert spaces
started in
the second half of the last century. See Zalesski\u\i, Sazonov and
Ulyanov (\citeyear{ZalSazUly88}) and \citet{Nag89} for
{optimal bounds} of order $\O(N^{-1/2})$
(with respect to eigenvalues of $\mathbb C$)
assuming finiteness of the third moment. For a more
detailed discussion see Yurinskii (\citeyear{Yur82}), Zalesski\u\i,
Sazonov and
Ulyanov (\citeyear{ZalSazUly89}),
Bentkus, G\"otze, Paulauskas and Ra\v ckauskas (\citeyear{Benetal91}),
BG {(\citeyear{BenGet95N2,BenGot96,BenGot97N1})} and
Senatov (\citeyear{Sen97,Sen98}).

Under some more restrictive moment
and dimension conditions the estimate of order $\O(N^{-1+\varepsilon})$,
with $\varepsilon\downarrow0$ as $d\uparrow\infty$, was obtained by
G\"
otze (\citeyear{Got79}). The proof in G\" otze (\citeyear{Got79}) was
based on a new
symmetrization inequality for characteristic functions of
quadratic forms. This inequality is related to Weyl's (\citeyear{Wey15})
inequality for trigonometric sums. This inequality and its
extensions (see Lemma~\ref{L51}) play
a crucial role in the
proofs of bounds in the CLT
for ellipsoids and hyperboloids in
finite and infinite dimensional cases. Under some additional
smoothness assumptions, error bounds $\O(N^{-1})$ (and, moreover,
Edgeworth type expansions) were obtained in G\" otze (\citeyear{Got79}),
\citet{Ben84}, Bentkus, G\" otze and Zitikis (\citeyear
{BenGotZit93}). BG {(\citeyear{BenGet95N2,BenGot96,BenGot97N1})}
established the bound of order $\O(N^{-1})$
without smoothness-type conditions. Similar bounds for the rate of
infinitely divisible approximations were obtained by Bentkus, G\"
otze and Zaitsev (\citeyear{BenGotZai97}). Among recent publications,
we should
mention the papers of \citeauthor{NagChe99} (\citeyear{NagChe99,NagChe05})
(${d\ge13}$, providing a more precise dependence of constants on
the eigenvalues of $\mathbb C$) and Bogatyrev, G\"otze and
Ulyanov (\citeyear{BogGotUly06}) (nonuniform bounds for $d\ge12$);
see also G\"otze
and Ulyanov (\citeyear{GotUly00}). The proofs of bounds of order $\O
(N^{-1})$ are
based on discretization (i.e., a reduction to lattice valued
random vectors) and the symmetrization techniques mentioned above.

Assuming
the matrices $\mathbb Q$ and $\mathbb C$ to be diagonal, and the
independence of the first five coordinates of $X$, BG (\citeyear{BenGot96}) have already reduced the dimension
requirement for
the bound $\O(N^{-1})$ to $d\ge5$. The independence assumption in
BG (\citeyear{BenGot96}) allowed to apply an adaption of the Hardy--Littlewood
circle method. For the general case considered in
Theorem \ref{T11},
one needs to develop new techniques. Some yet
unpublished results of G\"otze (\citeyear{Go94}) provide the rate
$\mathcal
O(N^{-1})$ for sums of two independent \textit{arbitrary} quadratic
forms (each of rank $d \ge3$). G\"otze and Ulyanov (\citeyear{GotUly03})
obtained bounds of order $\mathcal O(N^{-1})$ for some ellipsoids
in $\mathbb R^d$ with $d\ge5$ in the case of lattice distributions of $X$.

The optimal possible dimension condition for this rate is just
$d\geq5$, due to the lower bounds of order $\mathcal O(N^{-1}\log{
N})$ for dimension $d=4$ in the corresponding lattice point
problem. The question about precise convergence rates in
dimensions
$2\leq d \leq4$ still remains completely open (even in
the simplest case where $\mathbb Q$ is the identity operator $\mathbb
I_d$, and for random
vectors with
independent Rademacher coordinates). It should be mentioned that,
in the case $d=2$, a precise convergence rate would imply a
solution of the famous circle problem. Known lower bounds in the
circle problem correspond to the bound of
order $\mathcal O (N^{-3/4} \log^\delta
N )$, $\delta>0$, for $\Delta_N$. Hardy (\citeyear{Har}) conjectured
that up to
logarithmic factors this is the optimal order.

Now we describe the most important elements of the proof. We have
to mention that a big part of the proof repeats the arguments of
BG (\citeyear{BenGot97N1}); see BG (\citeyear{BenGot97N1}) for the
description and application of
symmetrization inequality and
discretization procedure.
In our proof we do not use the multiplicative inequalities of BG
(\citeyear{BenGot97N1}).
Here we replace those techniques
by arguments from the geometry of numbers, developed in
G\"otze (\citeyear{Got04}), combined with
effective equidistribution results by G\"otze and Margulis
(\citeyear{GotMar10}) for suitable actions of
unipotent subgroups of $\operatorname{SL}(2,\R)$; see Lemma \ref{GM}.
These new techniques (compared to previous results) are mainly
concentrated in Sections~\mbox{\ref{s4}--\ref{s7}}.

Using the Fourier inversion formula [see \eqref{eq31} and
\eqref{eq31o}], we have to estimate some integrals of the
absolute values of differences of characteristic functions of
quadratic forms. In Section \ref{s5}, we reduce the estimation of
characteristic functions to the estimation of a theta-series; see
Lemma \ref{L75} and inequality \eqref{koren}. To this end, we
write the expectation with respect to Rademacher random variables
as a sum with binomial weights $p(m)$ and $p(\ov m)$. Then we
estimate $p(m)$ and $p(\ov m)$ from above by discrete Gaussian
exponential weights $c_s q(m)$ and $c_s q(\ov m)$; see~\eqref{qm}, \eqref{pm}, \eqref{eq79} and \eqref{eq710}. Together
with the nonnegativity of some characteristic functions [see
\eqref{eq78} and \eqref{eq713}], this allows us to apply then
the Poisson summation formula from Lemma \ref{Le32}. This formula
reduces the problem to an estimation of integrals of theta-series.
Section \ref{s6} is devoted to some facts from number theory. We
consider the lattices, their $\alpha$-characteristics [which are
defined in \eqref{alp} and \eqref{alp3}] and Minkowski's
successive minima. In Section \ref{s7}, we reduce the estimation
of integrals of theta-series to some integrals of
$\alpha$-characteristics. An application of the crucial Lemma \ref{GM},
mentioned above,
ends the proof.

%s2 #&#
\section{Results}
\label{s11}

To formulate the results we need more notation repeating most
part of the notation used in BG (\citeyear{BenGot97N1}). Let $\s
_1^2\geq
\s_2^2\geq\cdots$ be the eigenvalues of~$\mathbb C$, counting
their multiplicities. We have $\s^2=\s_1^2+\s_2^2+\cdots$.

We identify the linear operators and corresponding matrices. By
$\mathbb I_d\dvtx\mathbb R^{d}\to\mathbb R^{d}$ we denote the identity
operator and, simultaneously, the diagonal matrix with entries 1
on the diagonal. By $\mathbb O_{d}$ we denote the $(d\times d)$
matrix with zero entries.

Throughout $ \mathcal S=\{ {e}_{1},\ldots, {e}_{s} \}\subset\mathbb
R^d$ denotes a finite
set of cardinality $s$. We write $\mathcal S_o$ instead of
$\mathcal S$ if the system $\{ {e}_{1},\ldots, {e}_{s} \}$ is
orthonormal. Let $p>
0$ and $\delta\geq0$. Denote
%
%e2.1 #&#
%
\begin{equation}
P(\delta,\mathcal S, Y)=\min_{e\in
\mathcal S} \P\bigl\{ \|Y -e\|\leq\delta
\bigr\}. \label{eq13uuu}
\end{equation}
Similarly to BG (\citeyear{BenGot97N1}),
we use the following nondegeneracy condition for the distribution
of a $d$-dimensional vector $Y$:
%
%e2.2 #&#
%
\begin{equation}
P_{\mathbb Q}(\delta,\mathcal S, Y)\=\min\bigl\{P(\delta,\mathcal S,
Y),P(\delta,\mathbb Q\mathcal S, Y) \bigr\} \geq p, \label{eq1.3}
\end{equation}
where $p>0$ is a parameter involved in the condition. Note that
%
%e2.3 #&#
%
\begin{equation}
\label{eq13}P(\delta,\mathcal S, Y) =P_{\mathbb
I_d}(\delta,\mathcal S, Y).
\end{equation}

Introduce truncated random vectors
%
%e2.4 #&#
%
\begin{eqnarray}
\label{eq14a}\qquad  X^\diamond&= &X {\mathbf I}\bigl \{ \|X\|\leq\s \sqrt{N} \bigr\}
,\qquad
X_\diamond=X {\mathbf I} \bigl\{ \|X\|> \s \sqrt{N} \bigr\},
\\
\label{eq14t} X^\bullet&=& X {\mathbf I} \bigl\{\bigl\| \mathbb
C^{-1/2} X\bigr\|\leq\sqrt{dN} \bigr\},\qquad X_\bullet=X {\mathbf I}
\bigl\{ \bigl\|\mathbb C^{-1/2} X\bigr\|> \sqrt{dN} \bigr\},
\end{eqnarray}
and their
moments (for $q>0$)
%
%e2.6 #&#
%
\begin{eqnarray}
\L_4^\diamond&=& \frac{ 1 }{ \s^{4} N } \E\bigl\|X^\diamond
\bigr\|^4,\qquad \Pi_q^\diamond= \frac{ N }{ (\s
\sqrt{N})^{q} } \E
\|X_\diamond\|^q,\label{eq15}\\
\L_4^\bullet&=& \frac{ 1 }{ d^{2} N } \E\bigl\|\mathbb C^{-1/2}
X^\bullet\bigr\|^4,\qquad \Pi_q^\bullet=
\frac{ N }{ ( \sqrt{dN})^{q} } \E\bigl\|\mathbb C^{-1/2} X_\bullet\bigr\|^q.
\label{eq15t}
\end{eqnarray}
Here and below ${\mathbf I} \{ A \} $ denotes the indicator
of an
event $A$. Of course, definitions~\eqref{eq14t} and \eqref{eq15t}
have sense if $d<\infty$ and the covariance operator
$\mathbb C $ is nondegenerate.

Clearly, we have
%
%e2.8 #&#
%
\begin{equation}
\label{eq14u} X^\diamond+X_\diamond=X^\bullet+X_\bullet=X,\qquad
\bigl\Vert X^\diamond\bigr\Vert\Vert X_\diamond\Vert=\bigl\Vert X^\bullet
\bigr\Vert\Vert X_\bullet\Vert=0.
\end{equation}
Generally speaking, $X^\bullet$ and $X^\diamond$ are
different truncated vectors. In BG (\citeyear{BenGot97N1}) the i.i.d.
copies of
the vectors $X^\diamond$ and $X_\diamond$ only were involved.
Truncation \eqref{eq14t} was there applied to the vector
$X^\diamond$. The use of $X^\bullet$ is more natural for
the estimation of constants in the case $d<\infty$.
It is easy to see that
%
%e2.9 #&#
%
\begin{equation}
\label{eq14rr} \bigl(\mathbb C^{-1/2} X \bigr)^{\diamond}= \bigl(
\mathbb C^{-
1/2} X \bigr)^{\bullet}= \mathbb C^{-1/2}
X^{\bullet}
\end{equation}
and
%
%e2.10 #&#
%
\begin{equation}
\label{eq14ru} \bigl(\mathbb C^{-1/2} X \bigr)_{\diamond}= \bigl(
\mathbb C^{-
1/2} X \bigr)_{\bullet}= \mathbb C^{-1/2}
X_{\bullet}.
\end{equation}
Equalities \eqref{eq14rr} and \eqref{eq14ru} provide a
possibility to apply auxiliary results obtained in BG (\citeyear
{BenGot97N1}) for
truncated vectors $X^\diamond$ and $X_\diamond$ to truncated
vectors $\mathbb C^{-1/2} X^{\bullet}$ and $\mathbb C^{-
1/2} X_{\bullet}$. However, one should take into account that
$\s^2$, $\L_4^\diamond$, $\Pi_q^\diamond$, $G$, $\ldots$ have to
be replaced by corresponding objects related to the vector $\mathbb
C^{-1/2} X$ (i.e., by
$d$, $\L_4^\bullet$, $\Pi_q^\bullet$, $\mathbb
C^{- 1/2} G, \ldots$).

By $c, c_1,c_2,\ldots$ we denote absolute positive constants. If
a constant depends on, say, $s$, then we point out the
dependence writing $c_s$ or $c(s)$. We denote by $c$ universal
constants which might be different in different places of the
text. Furthermore, in the conditions of theorems and lemmas (see,
e.g., Theorem \ref{T13} and the proofs of Theorems \ref{T15},
\ref{T16} and \ref{T21}) we write $c_0$ for an \textit{arbitrary}
positive absolute constant; for example, one may choose $c_0=1$.
We write $A\ll B$ if there exists an
absolute constant $c$ such that $A\leq c B$. Similarly, $A\ll_s
B$ if $A\leq c(s) B$. We also write $A\asymp_s B$ if $A\ll_s
B\ll_s A$. By $ \lfloor\a\rfloor$ we denote the largest integer
not greater than $\a$.

Throughout we assume that all random vectors and variables are
independent in aggregate if the contrary is not clear from the
context.
By $X_1,X_2,\ldots$ we
shall denote independent copies of a random vector $X$.
Similarly, $G_1,G_2,\ldots$ are
independent copies of $G$ and so on. By $\mathcal
L(X)$ we denote the distribution of $X$. Define the symmetrization
$\widetilde X $ of a random vector $X$ as a random vector with
distribution $ {\mathcal L (\widetilde X)= \mathcal L (X_1-X_2)}$.

Instead of normalized sums $S_N$, it is sometimes more convenient
to consider the sums $Z_N= {X}_{1}+\cdots+ {X}_{N} $. Then
$S_N=N^{-1/2} Z_N$.
Similarly, by $Z_N^{(\diamond)}$ (resp., $Z_N^{(\bullet)}$) we
shall denote sums of $N$ independent copies of $X^\diamond$ (resp.,~$X^\bullet$).
For example,
$Z_N^{(\bullet)}={X_1^\bullet}+\cdots+X_N^\bullet$.

The expectation ${\mathbf E}_Y$
with respect to a random vector $Y$
we define as the conditional expectation
\[
{\mathbf E}_Y f(X,Y,Z,\ldots) = \E\bigl( f(X,Y,Z\ldots) | X,Z,
\ldots\bigr)
\]
given all
random vectors but $Y$.

Throughout we write
$\operatorname{e}\{ x \}\=
\exp\{i x \}$.
By
%
%e2.11 #&#
%
\begin{equation}
\label{Fur}\widehat F(t)=\int_{-\infty}^\infty
\operatorname{e}\{tx\} \,dF(x),
\end{equation}
we denote the Fourier--Stieltjes transform
of a function $F$ of bounded variation or, in other words, the
Fourier transform of the measure which has the distribution
function $F$.

Introduce the distribution functions
%
%e2.12 #&#
%
\begin{eqnarray}\label{eq11}
F_a (x) \= \P\bigl\{ \mathbb Q [S_N-a]\leq x \bigr
\},\qquad  H_a(x) \= \P\bigl\{ \mathbb Q [G-a]\leq x \bigr\},
\nonumber
\\[-8pt]
\\[-8pt]
 \eqntext{a
\in\mathbb R^d, x\in\mathbb R.}
\end{eqnarray}
Furthermore, define, for $d=\infty$ and $a\in\mathbb R^d$, the Edgeworth
correction
\[
E_a(x)=E_a(x; \mathbb Q, X )
\]
as
a function of bounded variation such that
$E_a(-\infty) =0 $ and its
Fourier--Stieltjes transform
is given by
%
%e2.13 #&#
%
\begin{eqnarray}\label{eq12}
\widehat E_a(t)= \frac{ 2 (it)^2 }{ 3 \sqrt{N} } \E\operatorname{e}
\bigl\{t
\mathbb Q [Y] \bigr\} \bigl( 3 \langle\mathbb Q X, Y \rangle\langle
\mathbb Q
X,X\rangle+2 i t \langle\mathbb Q X, Y \rangle^3 \bigr),
\nonumber
\\[-8pt]
\\[-8pt]
\eqntext{Y= G-a.}
\end{eqnarray}

In finite dimensional spaces (for $1\le d<\infty$) we define the
Edgeworth correction as follows; see Bhattacharya and Rao (\citeyear
{BhaRan86}).
Let $\phi$ denote the standard normal density in $\mathbb R^d$. Then
$p(y) =\phi(\mathbb C^{-1/2} y)/\sqrt{\operatorname{det}
\mathbb C } $, $y\in\mathbb R^d$, is the density of~$G$, and, for
$a\in\mathbb R^d$, $b=\sqrt N a$, we have
%
%e2.14 #&#
%
\begin{eqnarray}\label{eq121}
E_a(x) &\=& \Theta_b (Nx) \= \frac{ 1 }{ 6 \sqrt{N} }
\chi(A_x),
\nonumber
\\[-8pt]
\\[-8pt]
\nonumber
 A_x& = &\bigl\{ u\in\mathbb
R^d\dvtx\mathbb Q [u-a]\leq x \bigr\},
\end{eqnarray}
with the signed measure
%
%e2.15 #&#
%
\begin{equation}
\chi(A) \=\int_A \E p'''(y)
X^3 \,dy \qquad\mbox{for the Borel sets } A\subset\mathbb
R^d,\label{eq122}
\end{equation}
and where
%
%e2.16 #&#
%
\begin{equation}
p'''(y) u^3 = p(y)
\bigl( 3 \bigl\langle\mathbb C^{-1} u,u\bigr\rangle\bigl\langle
\mathbb
C^{-1} y,u\bigr\rangle- \bigl\langle\mathbb C^{-1} y,u\bigr
\rangle^3 \bigr) \label{eq123}
\end{equation}
denotes the third Frechet derivative of $p$
in direction $u$.

Notice
that $E_a =0$
if $a=0$ or if ${\E\langle X,y\rangle^3=0}$, for all $y\in
\mathbb R^d$. In particular, $E_a =0$ if $X $ is symmetric
[i.e., $\mathcal L(X)=\mathcal L(-X)$].

We can write similar representations for $E_a^{\bullet}(x) =
\Theta_b^{\bullet} (Nx)$ and $E_a^{\diamond}(x) =
\Theta_b^{\diamond} (Nx)$ just replacing $X$ by $X^\bullet$ and
$X^\diamond$ in \eqref{eq12} or \eqref{eq122}
with $Y=G-a$.

For $b\in\R^d$, introduce the distribution functions
%
%e2.17 #&#
%
\begin{equation}
\Psi_b(x) \= \P\bigl\{ \mathbb Q [Z_N-b] \leq x
\bigr\}=F_a(x/N)\label{eq118}
\end{equation}
and
%
%e2.18 #&#
%
\begin{equation}
\Phi_b(x) \= \P\bigl\{ \mathbb Q [\sqrt{N} G -b]\leq x \bigr
\}=H_a(x/N).\label{eq119}
\end{equation}
Define, for $a\in\mathbb R^d$, $b=\sqrt N a$,
%
%e2.19 #&#
%
\begin{equation}
\label{edg} \D_N^{(a)}\=\sup_{x\in\mathbb R} \bigl|
F_a (x) -H_a(x)-E_a(x) \bigr| =\sup
_{x\in\mathbb R} \bigl|\Psi_b (x) - \Phi_b(x)-
\Theta_b(x) \bigr|;\hspace*{-35pt}
\end{equation}
see \eqref{eq11},
\eqref{eq121}, \eqref{eq118} and \eqref{eq119} to justify the
last equality in \eqref{edg}. We write $\D_{N,\bullet}^{(a)}$ and
$\D_{N,\diamond}^{(a)}$ replacing $E_a$ by $E_a^{\bullet}$
and $E_a^{\diamond}$ in \eqref{edg}.

The aim of this paper is to derive for $\D_N^{(a)}$ explicit
bounds of order $\mathcal O(N^{-1})$ without any additional
smoothness type assumptions. Theorem \ref{T13} [which was proved
in BG (\citeyear{BenGot97N1})] solved this problem in the case $13\leq
d\leq
\infty$.

In Theorems \ref{T13}--\ref{T21} we assume that the symmetric
operator $\mathbb Q$ is isometric, that is, that $\mathbb Q^2$ is
the identity operator $\mathbb I_d$. This does not restrict
generality; see Remark~1.7 in BG (\citeyear{BenGot97N1}). Indeed, any symmetric
operator $\mathbb Q $ may be decomposed as $\mathbb Q=\mathbb
Q_1\mathbb Q_0\mathbb Q_1$, where $\mathbb Q_0 $ is symmetric
and isometric and $\mathbb Q_1 $ is symmetric bounded and
nonnegative, that is, $\langle\mathbb Q_1 x,x\rangle\geq0$,
for all $x\in\mathbb R^d$. Thus, for any symmetric $\mathbb Q$, we can
apply
all our bounds replacing the random vector
$X $ by $\mathbb Q_1X, $ the Gaussian random vector $G $
by $\mathbb Q_1G$, the shift $a $ by $\mathbb Q_1a$, etc. In
the case of concentration functions (see Theorems \ref{T16} and
\ref{T21}), we have
${Q(X; \lambda; \mathbb Q )=
Q(\mathbb Q_1X; \lambda; \mathbb Q_0 )}$, and we may apply the
results provided $\mathbb Q_1X $ (instead of $X$) satisfies the
conditions.

%th4 #&#
%
\begin{theorem}[{[BG (\citeyear{BenGot97N1}), Theorem
1.3]}]\label{T13} Assume
that
$\delta= 1/300$, $\mathbb Q^2=\mathbb I_d$, ${s=13}$ and $13\leq
d\leq\infty$. Let $ P_{\mathbb Q}(\delta, \mathcal
S_o,c_0 G/\s) \geq p>0$, where $c_0$ is an arbitrary positive absolute
constant. Then
%
%e2.20 #&#
%
\begin{equation}
\D_N^{(a)}\leq C \bigl( \Pi_3^{\diamond}
+ \L_4^{\diamond} \bigr) \bigl( 1 + \Vert a/\s
\Vert^6 \bigr) \label{eq16}
\end{equation}
and
%
%e2.21 #&#
%
\begin{equation}
\D_{N,\diamond}^{(a)}\leq C \bigl( \Pi_2^{\diamond}
+ \L_4^{\diamond} \bigr) \bigl( 1 + \Vert a/\s
\Vert^6 \bigr) \label{eq16w}
\end{equation}
with $C=c p^{-6} + c (\s/\t_{8})^{8}$, where $\t_1^4 \geq
\t_2^4 \geq\cdots$ are the eigenvalues of $(\mathbb C \mathbb
Q)^2$.
\end{theorem}

Unfortunately, we cannot apply Theorem \ref{T13} for
$d=5,6,\ldots,12$. Moreover, the quantity $C$ depends on $p$
which is exponentially small with respect to eigenvalues of
$\mathbb C$.

The main result of the paper is Theorem \ref{T15}. It is valid
for $5\le d<\infty$ in finite-dimensional spaces $\mathbb R^d $ only.
However, the bounds of Theorem \ref{T15}
depend on the smallest $\s_j$'s. This makes them
unstable if one or more of coordinates of $X$ degenerates. In our
finite dimensional results, Theorems
\ref{T15}, \ref{T16}, \ref{T21} and Corollary \ref{T15c},
we always assume that the covariance operator
$\mathbb C $ is nondegenerate.

%th5 #&#
%
\begin{theorem}{\label{T15}} Let
$\mathbb Q^2=\mathbb I_d$, $5\le d<\infty$.
Then
%
%e2.22 #&#
%
\begin{equation}
\D_{N}^{(a)} \leq C \bigl( \Pi_3^\bullet+
\L_4^\bullet\bigr) \bigl( 1 + \Vert a/\s\Vert^3
\bigr) \label{eq18}
\end{equation}
and
%
%e2.23 #&#
%
\begin{equation}
\D_{N,\bullet}^{(a)} \leq C \bigl( \Pi_2^\bullet+
\L_4^\bullet\bigr) \bigl( 1 + \Vert a/\s\Vert^3
\bigr), \label{eq18w}
\end{equation}
with $C= c_d \s^d (\det\mathbb C)^{-1/2}$.
\end{theorem}

In G\"otze and Zaitsev (\citeyear{GotZai10}) [see also a preprint of
G\"otze and
Zaitsev (\citeyear{GotZai09}) which is available in Internet], an
analogue of
Theorem \ref{T15} was proved in the case $s=5$ and ${5\le
d<\infty}$ with bounds for constants which are not optimal. It
extends to the case $d\ge5$ Theorem 1.5 of BG (\citeyear{BenGot97N1}) which
contains the corresponding bounds for $d\ge9$. Unfortunately, in
both papers, the quantity $C$ depends on $p$ which is
exponentially small with respect to $\s_9/\s^2$ [in BG (\citeyear
{BenGot97N1})]
and to $\s_5/\s^2$ [in G\"otze and Zaitsev (\citeyear{GotZai10})].
Under some
additional conditions, $C$ may be estimated from above by
$c_d \exp(c\s^2\s_9^{-2})$ and by
$c_d \exp(c\s^2\s_5^{-2})$, respectively. The case $a=0$ was
considered earlier in G\"otze and Zaitsev (\citeyear{GotZai08}). As a
consequence, we have proved Theorem~\ref{T11}.

It is easy to see that, according to \eqref{eq14t} and \eqref{eq15t},
%
%e2.24 #&#
%
\begin{equation}
\label{eq18fc}\quad \Pi_3^\bullet+ \L_4^\bullet
\le\E\bigl\| \mathbb C^{-1/2} X\bigr\|^{3+\delta}/\bigl(d^{(3+\delta)/2}
N^{(1+\delta)/2}\bigr)\qquad \mbox{for } 0\le\delta\le1
\end{equation}
and
%
%e2.25 #&#
%
\begin{equation}
\label{eq18y}\Pi_2^\bullet+ \L_4^\bullet
\le\E\bigl\|\mathbb C^{-
1/2} X\bigr\|^{2+\delta}/\bigl(d^{(2+\delta)/2}
N^{\delta/2}\bigr)\qquad \mbox{for } 0\le\delta\le2.
\end{equation}
Therefore, Theorem \ref{T15} implies the following Corollary \ref{T15c}.

%co6 #&#
%
\begin{corollary}{\label{T15c}} Let
$\mathbb Q^2=\mathbb I_d$, $5\le d<\infty$.
Then
%
%e2.26 #&#
%
\begin{equation}\qquad
\D_{N}^{(a)} \ll_d C \bigl( 1 + \Vert a/\s
\Vert^3 \bigr) \E\bigl\|\mathbb C^{-1/2} X\bigr\|^{3+\delta}/N^{(1+\delta)/2}
\qquad \mbox{for } 0\le\delta\le1 \label{eq18c}
\end{equation}
and
%
%e2.27 #&#
%
\begin{equation}\qquad
\D_{N,\bullet}^{(a)} \ll_d C \bigl( 1 + \Vert a/\s
\Vert^3 \bigr) \E\bigl\|\mathbb C^{-
1/2} X\bigr\|^{2+\delta}/N^{\delta/2}\qquad
\mbox{for } 0\le\delta\le2, \label{eq18wc}
\end{equation}
with $C= \s^d (\det\mathbb C)^{-1/2}$. In particular,
%
%e2.28 #&#
%
\begin{equation}
\label{eq18f}\max\bigl\{\D_{N}^{(a)},\D
_{N,\bullet
}^{(a)} \bigr\} \ll_d C \bigl( 1 + \Vert
a/\s\Vert^3 \bigr) \E\bigl\|\mathbb C^{-1/2} X\bigr\|^4/N.
\end{equation}
\end{corollary}

Theorem \ref{T13} and Corollary \ref{T15c} yield Theorems \ref
{T11} and
\ref{T11a},
using that $E_0(x)\equiv0$, $\E\|\mathbb C^{-1/2} X\|^4\le\b/\s
_d^4$, and
$\Pi_2^{\diamond}+\L_4^{\diamond}\leq\Pi_3^{\diamond}+\L
_4^{\diamond
}\leq
\b/(\s^4 N)$.

Comparing Theorem \ref{T15} and Corollary \ref{T15c} with the
main results of BG (\citeyear{BenGot97N1}) and G\"otze and Zaitsev
(\citeyear{GotZai10}), we see
that the constants in Theorem \ref{T15} and Corollary \ref{T15c}
are written explicitly in terms of moment characteristics of
$\mathcal L(X)$. In the case of nonpositive definite quadratic
forms $\mathbb Q$ such kind of estimates were unknown.

If, in the conditions of Theorem \ref{T15}, the distribution of
$X$ is symmetric or $a=0$, then the Edgeworth corrections
$E_a(x)$ and $E_a^\bullet(x)$ vanish and
%
%e2.29 #&#
%
\begin{equation}\label{eq18ff}
\D_{N}^{(a)} =\D_{N,\bullet}^{(a)}\leq C
\bigl( \Pi_2^\bullet+ \L_4^\bullet\bigr)
\bigl( 1 + \Vert a/\s\Vert^3 \bigr),\qquad C=
c_d \s^d (\det\mathbb C)^{-1/2}.\hspace*{-35pt}
\end{equation}
The corresponding
inequality from Theorem 1.4 of BG (\citeyear{BenGot97N1}) yields in
the case $s=9$
and $9\leq d\leq\infty$ under the condition $ P_{\mathbb Q}(
\delta, \mathcal S_o,c_0 G/\s) \geq p>0 $ with $\delta= 1/300$ the
bound
%
%e2.30 #&#
%
\begin{equation}
\D_N^{(a)}\leq C \bigl( \Pi_2^{\diamond}
+ \L_4^{\diamond}\bigr) \bigl(1+\Vert a/\s\Vert^{4 }
\bigr),\qquad C=c p^{-4}. \label{eq18fg}
\end{equation}
It is clear that sometimes the bound
\eqref{eq18fg} may be sharper than \eqref{eq18ff} but,
unfortunately, it depends on $p$ which is usually exponentially
small with respect to $\s_9/\s^2$.

Several authors have obtained more precise estimates of constants
in the case of $d$-dimensional balls with $d\ge12$, including the case
$d=\infty$. For balls,
$\mathbb Q=\mathbb I_d$. In the papers mentioned above, the
authors have used the aproach of BG (\citeyear{BenGot97N1}) and
obtained bounds
with constants depending on $s\le d$ largest eigenvalues
$\s_1^2\ge\s_2^2\ge\cdots\ge\s_s^2$ of the covariance
operator $\mathbb C$; see \citeauthor{NagChe99} (\citeyear{NagChe99,NagChe05}),
with $d\ge s=13$, and G\"otze and Ulyanov (\citeyear{GotUly00}), and Bogatyrev,
G\"otze and Ulyanov (\citeyear{BogGotUly06}), with $d\ge s=12$. It should be
mentioned, that, in a particular case, where $\mathbb Q=\mathbb
I_d$ and $d\ge12$, these results may be sharper than \eqref{eq18},
for some covariance operators $\mathbb C$.
The lower bounds for $\Delta^{(a)}_N$ under different conditions on $a$
and $\mathcal L(X)$
are given in G\"otze and Ulyanov (\citeyear{GotUly00}). See the upper
bounds for $\Delta^{(a)}_N$ with $s=12$ and $d=\infty$ in Ulyanov and
G\"otze (\citeyear{Uly11}), where the dependence on the eigenvalues of
$\mathbb C$
is given
in the upper bound in an explicit form which
coincides with that in the lower bound. See also the review
of recent results for ``almost'' quadratic forms in \citet{ProUly13}.

Thus we see that the statement of Theorem \ref{T15} is
especially interesting for $d=5,\ldots,11$. It is new even in
the case of $d$-dimensional balls. It is plausible that the bounds
for constants in Theorem \ref{T15} could be also improved for
balls with $d\ge5$, especially in the case where $d$ is large.
It seems, however, that this is
impossible in the case of general $\mathbb Q$ even if $\mathbb
Q^2=\mathbb I_d$. For example, consider the operator $\mathbb Q$
such that $\mathbb Q e_j=e_{d-j+1}$, where $\mathbb C
e_j=\s_j^2e_j$, $j=1,2,\ldots,d$, are eigenvectors of $\mathbb C$.
Following the proof of Theorem \ref{T15}, we see that the bounds
for the modulus of the characteristic function $ | \widehat\Psi_b
(t) |= |\E\operatorname{e} \{ t \mathbb Q [Z_N
-b] \} |$ behave as the bounds for the modulus of the
characteristic function $ |\E\operatorname{e} \{ t
\mathbb I_d [Z_N -b] \} |$, but with eigenvalues of the
covariance operator $\s_1\s_d$, $\s_2\s_{d-1}$,
$\s_3\s_{d-2}$, \ldots which may be essentially smaller than
$\s_1^2\ge\s_2^2\ge\s_3^2\ge\cdots$. Therefore, it is natural that
the bounds for constants in Theorem \ref{T15} depends on the
smallest eigenvalues of the covariance operator $\mathbb C$.

Note that, in the proof of Theorem \ref{T13} in BG (\citeyear{BenGot97N1}),
inequalities \eqref{eq16} and~\eqref{eq16w} were derived for the
Edgeworth correction $E_a(x)$ defined by \eqref{eq12}. However,
from Theorems \ref{T13} and \ref{T15} it follows that, at least
for $13\le d<\infty$, definitions \eqref{eq12} and \eqref{eq121}
determine the same function $E_a(x)$. Indeed, both functions may
be represented as $N^{-1/2} K_j(x)$, where $K_j(x)$ are some
functions of bounded variation which are independent of $N$.
Furthermore, inequalities \eqref{eq16} and \eqref{eq18} provide
both bounds of order $\mathcal O(N^{-1})$. This is possible only if
the Edgeworth corrections $E_a(x)$ are the same in these
inequalities.

On the other hand, it is proved (for $d\geq9$) that definition
\eqref{eq12} determines
a function of bounded variation [see BG (\citeyear{BenGot97N1}, Lemma
5.7)], while definition~\eqref{eq121} has no sense for ${d=\infty}$.

Introduce the concentration function
%
%e2.31 #&#
%
\begin{eqnarray}
\label{con} Q(X; \lambda)&=&Q(X; \lambda; \mathbb Q )
\nonumber
\\[-8pt]
\\[-8pt]
\nonumber
& = &\sup
_{a\in\mathbb R^d, x\in\R} \P\bigl\{ x \leq\mathbb Q [X-a]\leq
x+\lambda\bigr\}\qquad \mbox{for } \lambda\geq0.
\end{eqnarray}
Note that, evidently, $Q(X+Y; \lambda)\le Q(X;
\lambda)$, for any $Y$ which is independent of~$X$.

We say that a random vector $Y$ is concentrated in $\mathbb
L\subset\mathbb R^d $ if $\P\{ Y\in\mathbb L\} =1$. In BG [(\citeyear
{BenGot97N1}), item
(iii) of Theorem 1.6] it was shown that if
$\widetilde X$ is not concentrated in a proper closed linear
subspace of $\mathbb R^d$, $1\leq d\leq\infty$, then for any $\delta
>0$ and
$\mathcal S$,
there exists a natural number $m$ such that the condition
$ P_{\mathbb Q}(\delta,\mathcal S, m^{-1/2} \widetilde Z_m ) \geq p$
holds with
some $ p>0$.

In this paper, we shall prove the following Theorems \ref{T16}
and \ref{T21}.

%th7 #&#
%
\begin{theorem}{\label{T16}} Let $\mathbb Q^2=\mathbb I_d$,
$5\leq s= d< \infty$ and $0\leq\delta\leq1/(5 s)$. Then:
\begin{longlist}[(ii)]
\item[(i)]
%
%e2.32 #&#
%
\begin{equation}
 Q(Z_N; \lambda)\ll_d (p N)^{-1} \max\bigl\{
1; \lambda\s^{-2} \bigr\} \s^d (\det\mathbb
C)^{-1/2}\qquad\mbox{for all } \lambda\geq0, \label{eq19}\hspace*{-35pt}
\end{equation}
if $ P( \delta
,\mathcal S_o, \mathbb C^{- 1/2} \widetilde X )\ge p$ for some $\mathcal S_o$ and $p>0$.

\item[(ii)]
%
%e2.33 #&#
%
\begin{equation}
 Q(Z_N; \lambda)\ll_d (p N)^{-1} \max\bigl\{
m; \lambda\s^{-2} \bigr\} \s^d (\det\mathbb
C)^{-1/2} \qquad \mbox{for all } \lambda\geq0, \label{eq110}\hspace*{-35pt}
\end{equation}
if, for
some $\mathcal S_o$ and positive integer $m$,
$ P( \delta,\mathcal S_o, m^{-1/2} \mathbb C^{-
1/2} \widetilde Z_m )\ge p>0$.
\end{longlist}
\end{theorem}

%th8 #&#
%
\begin{theorem}{\label{T21}} Assume that $5\leq d< \infty$
and that $\mathbb Q^2=\mathbb I_d$.
Then
%
%e2.34 #&#
%
\begin{eqnarray}\label{eq21}
Q(Z_N; \lambda)\ll_d \max\bigl\{ \Pi_2^\bullet+
\L_4^\bullet; \lambda \s^{-2} N^{-1}
\bigr\} \s^d (\det\mathbb C)^{-1/2}
\nonumber
\\[-8pt]
\\[-8pt]
\eqntext{\mbox{for all } \lambda\geq
0.}
\end{eqnarray}
In particular,
$Q(Z_N; \lambda)\ll_d N^{-1} \max\{ \E\|\mathbb C^{-
1/2} X\|^4; \lambda\s^{-2} \} \s^d (\det\mathbb C)^{-1/2}$.
\end{theorem}

Theorems \ref{T16} and \ref{T21} yield more explicit
versions of Theorems 1.5 and 2.1 from G\"otze and Zaitsev (\citeyear{GotZai10})
[which extend to the case $5\le d\le\infty$ Theorems~1.6 and 2.1
of BG (\citeyear{BenGot97N1}) which were proved for $9\le d\le\infty
$]. We should
mention that the results of G\"otze and Zaitsev (\citeyear{GotZai10})
do not
follow from Theorems
\ref{T15}, \ref{T16} and \ref{T21}.
For example, they may be sharper than Theorems
\ref{T15}, \ref{T16} and \ref{T21}, in a particular case, where
$\mathbb Q=\mathbb I_d$ and
$\s_5\asymp_d\s$. Under some additional conditions, $\s^d (\det
\mathbb
C)^{-1/2}$ is replaced by $\exp(c\s^2\s_5^{-2})\asymp_d1$. On the
other hand, $\s^d (\det\mathbb
C)^{-1/2}$ provides a power-type dependence on eigenvalues of $\mathbb C$
and the results are valid for $\mathbb Q$ which might be not
positive definite.

In Theorems \ref{T15} and \ref{T21}, we do not assume
conditions $ P(\cdt)\ge p>0$ or $ P_{\mathbb Q}(\cdt)\ge p>0$. In
the proofs, we use, however, that, for any fixed absolute positive
constant $c_0$ and any positive quantity $c_d$ depending on $d$
only, condition $ P(\delta,\mathcal S_o, c_0 \mathbb C^{- 1/2}
G)\ge p$ is fulfilled with $s=d$, $\delta=c_d$ and $p\asymp_{d}1$, for
any orthonormal system $\mathcal S_o$.

Similarly to BG
(\citeyear{BenGot97N1}), in Section \ref{s2}, we prove bounds for
concentration
functions. The proof is technically simpler as that of Theorem
\ref{T15}, but it shows how to apply the principal ideas. This
proof repeats almost literally the corresponding proof of BG
(\citeyear{BenGot97N1}). The only difference consists in the use of
new Lemma
\ref{GZ2} which allows us to estimate characteristic functions of
quadratic forms for relatively large values of argument $t$. In
Sections \ref{s3} and \ref{s4}, Theorem \ref{T15} is proved. We
replace Lemma 9.4 of BG (\citeyear{BenGot97N1}) by its improvement,
Lemma \ref{L94}. Another difference is in another choice of $k$
in \eqref{eq156a} and \eqref{eq156c} in comparison with that in
BG (\citeyear{BenGot97N1}). In Sections \ref{s5}--\ref{s7} we prove
estimates for
characteristic functions which were discussed in
Section~\ref{s1}.\looseness=-1

%s3 #&#
\section{Proofs of bounds for concentration functions}
\label{s2}

\subsection*{Proof of Theorems \protect\ref{T16} and \protect\ref{T21}} Below we
prove assertions \eqref{eq19};
\eqref{eq19}${}\Longrightarrow$ \eqref{eq110} and \eqref{eq110}${}\Longrightarrow{}$\eqref{eq21}.
The proof repeats almost literally the
corresponding proof of BG (\citeyear{BenGot97N1}). It is given here for
the sake of completeness. The only essential difference is in the use
of Lemma \ref{GZ2} in the proof of Lemma \ref{L23}. We have also to
replace everywhere 9 by 5 and $\diamond$ by $\bullet$.\vadjust{\goodbreak} %$\square$

For $ 0\leq t_0\leq T$ and $b\in\R^d$,
define the integrals
\[
I_0=\int_{-T}^{T} \bigl| \widehat
\Psi_b (t)\bigr | \,dt, \qquad I_1= \int_{ t_0 \leq|t|\leq T}
\bigl| \widehat\Psi_b (t) \bigr| \frac{ dt }{ | t | },
\]
where
%
%e3.1 #&#
%
\begin{equation}
\label{eq823} \widehat\Psi_b(t)=\E\operatorname{e} \bigl\{ t
\mathbb Q [Z_N -b] \bigr\}
\end{equation}
denotes the Fourier--Stieltjes
transform of
the distribution function $\Psi_b $
of $ \mathbb Q [Z_N-b]$. Note that $ | \widehat\Psi_b
(-t) |= | \widehat\Psi_b (t) |$.

%le9 #&#
%
\begin{lemma}\label{L23} Assume
that $ P(\delta, \mathcal S_o, \mathbb C^{-1/2} \widetilde X)\ge p>0$
with some $
0\leq\delta\leq1/(5 s)$ and $5\le s=d<\infty$. Let $\s^2=1$ and
%
%e3.2 #&#
%
\begin{equation}
\label{eq23} t_0=c_1(s) \s_1^{-2}(p
N)^{-1+ 2/s},\qquad  c_2(s) \s_1^{-2}\leq
T\leq c_3(s) \s_1^{-2}
\end{equation}
with some positive constants $c_j(s)$,
$1\leq j\leq3$. Then
%
%e3.3 #&#
%
\begin{equation}
I_0\ll_s (\det\mathbb C)^{-1/2} (p
N)^{-1},\qquad I_1 \ll_s (\det\mathbb
C)^{-1/2} ( p N)^{-1}. \label{eq2.3}
\end{equation}
\end{lemma}

\begin{pf}
Note that the condition $\s^2=1$ implies that
%
%e3.4 #&#
%
\begin{equation}
\label{MTq}T\asymp_s\s_1^2
\asymp_s\s^2=1 \quad\mbox{and}\quad \det\mathbb C
\leq 1.
\end{equation}
Denote $k =p N$. Without loss of
generality we assume that
$k\geq c_s$, for a sufficiently large quantity $c_s$ depending on $s$ only.
Indeed, if $k\leq c_s$, then one can prove~\eqref{eq2.3} using
\eqref{MTq} and $|\widehat\Psi_b |\leq1$. Choosing $c_s$ to be large
enough, we ensure that $k\geq c_s$ implies $1/k\leq t_0 \leq T$.

Lemma \ref{GZ2} and \eqref{MTq} imply now that
%
%e3.5 #&#
%
\begin{equation}
\label{MTN38} \int_{c_4(s)k^{-1+2/s}}^{T } \bigl| \widehat
\Psi_b (t) \bigr| \frac{ dt }{ t } \ll_s \frac{ (\det\mathbb C)^{-1/2}
}{ k }
\end{equation}
for any $c_4(s)$ depending on $s$ only. Inequalities
\eqref{MTq} and \eqref{MTN38} imply \eqref{eq2.3} for $I_1$.

Let us prove inequality \eqref{eq23} for $I_0$.
By \eqref{MTq} and by Lemma \ref
{GZ},
for any $\g>0$ and any fixed $t\in\mathbb R$
satisfying $k^{1/2}\llvert t\rrvert\le c_5(s)$, where $c_5(s)$ is
an arbitrary quantity depending on $s$ only, we have (taking into
account that $|\widehat\Psi_b |\leq1$)
%
%e3.6 #&#
%
\begin{equation}
\label{MTN}\quad  \bigl| \widehat\Psi_b (t)\bigr | \ll_{\g, s} \min\bigl
\{ 1; k^{-\g}+ k^{-s/2} \llvert t\rrvert^{-s/2} (\det
\mathbb C)^{-1/2} \bigr\}, \qquad k= p N.
\end{equation}
Furthermore, choosing an appropriate $\g$ and using \eqref
{MTq}--\eqref
{MTN}, we obtain
%
%e3.7 #&#
%
\begin{equation}
\label{MTN23}(\det\mathbb C)^{1/2} I_0 \ll_s
\int_{0}^{ 1/k} {dt} + \frac{ 1 }{ k } +\int
_{1/k}^\infty\frac{ dt }{ ( t k )^{s/2} } \ll_s
\frac{ 1 }{ k },
\end{equation}
proving \eqref{eq23} for $I_0$.
\end{pf}

\begin{pf*}{Proof of \eqref{eq19}}
Let $\s^2=1$. Using a well-known
inequality for concentration functions [see, e.g., Petrov
(\citeyear{Pet75}), Lemma 3 of Chapter 3], we have
%
%e3.8 #&#
%
\begin{equation}
Q(Z_N; \lambda)\leq4 \sup_{b\in\mathbb R^d} \max\{ {
\lambda}; 1 \} \int_{0}^1 \bigl| \widehat
\Psi_b (t) \bigr| \,dt.\label{eq25}
\end{equation}
To
estimate the integral in \eqref{eq25} we apply Lemma
\ref{L23} which implies that
%
%e3.9 #&#
%
\begin{equation}
\label{MTN89}Q(Z_N; \lambda)\ll_d \max\{ {\lambda}; 1
\} (p N)^{-1} (\det\mathbb C)^{-1/2},
\end{equation}
proving \eqref{eq19}
in the case $\s^2=1$. If $\s^2\ne1$, we obtain \eqref{eq19}
applying \eqref{MTN89} to $Z_N/\s$.
\end{pf*}

\begin{pf*}{Proof of \eqref{eq19}${}\Longrightarrow{}$\eqref{eq110}}
Without loss of generality we can assume that $N/m\geq2$. Let
$Y_1,Y_2,\ldots$ be independent copies of $m^{-1/2} Z_m$.
Denote $W_k= Y_1+\cdots+Y_k$. Then $\mathcal L (Z_N)= \mathcal L
(\sqrt{m} W_k +y)$,
where $k=\lfloor N/m\rfloor$ is the
largest integer not greater than $N/m$ and
$y$ is independent of $W_k$. Therefore,
$Q(Z_N; \lambda) \leq Q(W_k; \lambda/m )$. In order to estimate
$Q(W_k;
\lambda/m )$ we apply \eqref{eq19} replacing $Z_N$ by~$W_k$. We
have
%
%e3.10 #&#
%
\begin{eqnarray}
Q(W_k; \lambda/m )&\ll_s &(p k )^{-1}
\max\bigl\{ 1; \lambda\s^{-2} /m \bigr\} \s^d (\det\mathbb
C)^{-1/2}
\nonumber
\\[-8pt]
\\[-8pt]
\nonumber
&\ll&(p N )^{-1} \max\bigl\{ m; \lambda\s^{-2} \bigr\}
\s^d (\det\mathbb C)^{-1/2}.
\end{eqnarray}
\upqed\end{pf*}

Recall that truncated random vectors and their moments are
defined by \eqref{eq14a}--\eqref{eq15t} and that $\mathbb C=\cov
X=\cov G$.

%le10 #&#
%
\begin{lemma}\label{L24}
The random vectors $X^\bullet$, $X_\bullet$ satisfy
\[
\langle\mathbb C x,x\rangle= \bigl\langle\cov X^\bullet x,x\bigr
\rangle+\E\langle X_\bullet,x\rangle^2+ \bigl\langle\E
X^\bullet,x\bigr\rangle^2.
\]
There exist independent centered Gaussian vectors $G_\ast$ and
$W$ such that
\[
\mathcal L (G) =\mathcal L (G_\ast+W )
\]
and
\[
2 \cov G_\ast=2 \cov X^\bullet=\cov\widetilde{X}^\bullet,\qquad
\langle\cov W x,x\rangle=\E\langle X_\bullet,x\rangle^2+
\bigl\langle\E X^\bullet,x\bigr\rangle^2.
\]
Furthermore,
\[
\E\bigl\| \mathbb C^{-1/2} G \bigr\|^2= d=\E\bigl\| \mathbb
C^{-1/2} G_\ast\bigr\|^2 + \E\bigl\|\mathbb C^{-1/2}
W \bigr\|^2
\]
and
$\E\|\mathbb C^{-1/2} W \|^2\leq2 d \Pi_2^\bullet$.
\end{lemma}
We omit the simple proof of this lemma; see BG [(\citeyear{BenGot97N1}), Lemma
2.4] for the same statement with $\diamond$ instead of $\bullet$.

Recall that $Z_N^{(\bullet)}$ and $Z_N^{(\diamond)}$ denote sums
of $N$ independent copies of $X^\bullet$ and $X^\diamond$,
respectively.

%le11 #&#
%
\begin{lemma}\label{L25} Let $\varepsilon>0$.
There exist absolute positive constants $c$ and $c_1$ such that
the condition $\Pi_2^\bullet\leq c_1 p \delta^2 / (d\varepsilon^2)$
implies that
\[
P\bigl( \delta, \mathcal S, \varepsilon\mathbb C^{-1/2} G \bigr
)\ge p\quad
\Longrightarrow \quad P \bigl( 4 \delta, \mathcal S, \varepsilon(2
m)^{-1/2} \mathbb C^{-1/2} \widetilde{Z}_m^{(\bullet)}
\bigr)\ge p/4
\]
for
$m\geq c\varepsilon^4 d^2 N \L_4^\bullet/ (p \delta^4 )$.
\end{lemma}

Lemmas \ref{L24} and \ref{L25} are in fact the statements of Lemmas
2.4 and 2.5 from BG~(\citeyear{BenGot97N1}) applied to the vectors
$\mathbb
C^{-1/2} X$ instead of the vectors $X$. We use in this connection
equalities \eqref{eq13}, \eqref{eq14rr} and \eqref{eq14ru}
replacing in the formulation $\s^2$, $\L_4^\diamond$,
$\Pi_q^\diamond$, $G$, $Z_m^{(\diamond)}$, $\ldots$
by $d$, $\L_4^\bullet$, $\Pi_q^\bullet$, $\mathbb C^{-1/2} G$,
$Z_m^{(\bullet)}, \ldots,$ respectively.

\begin{pf*}{Proof of \eqref{eq110}${}\Longrightarrow{}$\eqref{eq21}} By
a standard truncation argument, we have
%
%e3.11 #&#
%
\begin{equation}\quad
\bigl| \P\{ Z_N \in A \} - \P\bigl\{ Z_N^{(\bullet)} \in
A \bigr\} \bigr| \leq N \P\bigl\{ \bigl\|\mathbb C^{-1/2} X\bigr\|> \sqrt{dN}
\bigr\}
\leq\Pi_2^\bullet\label{eq28}
\end{equation}
for any Borel set $A$, and
%
%e3.12 #&#
%
\begin{equation}
Q(Z_N,\lambda) \leq\Pi_2^\bullet+ Q
\bigl(Z_N^{(\bullet)},\lambda\bigr). \label{eq29}
\end{equation}
Recall
that we are proving \eqref{eq21} assuming that $5\le d<\infty$.
It is easy to see that, for any absolute positive constant $c_0$
and for any orthonormal system $ \mathcal S_o=\{ {e}_{1},\ldots,
{e}_{s} \}\subset\mathbb R^d$, condition
%
%e3.13 #&#
%
\begin{equation}\qquad
P\bigl(\delta,\mathcal S_o, c_0 \mathbb
C^{- 1/2} G\bigr)\ge p \quad\mbox{with } p\asymp_d1, 5\le s=d<
\infty, \delta=1/(20 s)\label{eq33ss}
\end{equation}
is in fact
fulfilled automatically since the vector $\mathbb C^{-1/2} G$
has standard Gaussian distribution in $\mathbb R^d$ and, therefore,
\[
\P\bigl\{\bigl \Vert c_0 \mathbb C^{-1/2} G-e\bigr\Vert\le\delta
\bigr\}=\P\bigl\{ \bigl\Vert\mathbb C^{-1/2} G-c_0^{-1}
e\bigr\Vert\le c_0^{-1} \delta\bigr\}=c(d,c_0)
\]
for any vector $e\in\mathbb R^d$ with $\Vert e\Vert=1$. For any fixed $c_0$,
the $c(d,c_0)$ may be considered as a quantity depending on $d$
only. Clearly, $4\delta=1/(5s)$. Write $K= \varepsilon/ \sqrt{2} $
with $\varepsilon= c_0 $.
Then, by \eqref{eq33ss} and Lemma \ref{L25}, we have
%
%e3.14 #&#
%
\begin{equation}
P \bigl( \delta,\mathcal S_o,\varepsilon \mathbb
C^{-1/2} G \bigr)\ge p \quad\Longrightarrow \quad P \bigl( 4 \delta,\mathcal
S_o,m^{-1/2} K \mathbb C^{-1/2}
\widetilde{Z}_m^{(\bullet)} \bigr)\ge p/4, \label{eq210}\hspace*{-35pt}
\end{equation}
provided that
%
%e3.15 #&#
%
\begin{equation}
\Pi_2^\bullet\leq c_1(d),\qquad m\geq
c_2(d) N \L_4^\bullet.\label{eq211}
\end{equation}
Without loss of generality
we may assume that $\Pi_2^\bullet\leq c_1(d)$, since otherwise
the result follows easily from the trivial inequality
$Q(Z_N;\lambda)\leq1$.

The nondegeneracy
condition \eqref{eq210} for $K \widetilde{Z}_m^{(\bullet)} $
allows us to apply inequality~\eqref{eq110}
of Theorem \ref{T16}, and, using \eqref{eq33ss}, we obtain
%
%e3.16 #&#
%
\begin{eqnarray}\label{eq212}
Q\bigl(Z_N^{(\bullet)},\lambda\bigr) &=& Q\bigl( K
Z_N^{(\bullet)},K^2 \lambda\bigr)
\nonumber
\\[-8pt]
\\[-8pt]
\nonumber
&\ll_d& N^{-1} \max\bigl\{ m; K^2
\lambda/K^2\s^2 \bigr\} \s^d (\det\mathbb
C)^{-1/2}
\end{eqnarray}
for any $m$
such that \eqref{eq211} is fulfilled. Choosing the minimal $m$ in
\eqref{eq211}, we obtain
%
%e3.17 #&#
%
\begin{equation}
Q\bigl(Z_N^{(\bullet)},\lambda\bigr) \ll_d \max\bigl
\{ \L_4^\bullet; \lambda/\bigl(\s^2 N\bigr)\bigr
\} \s^d (\det\mathbb C)^{-1/2}. \label{eq213}
\end{equation}
Combining the estimates
\eqref{eq29} and \eqref{eq213}, we complete the proof.
\end{pf*}

%%%%%%3%%%%%%%%%%%%%%%%%%%%%%%%%%%%%%%%%%%%%%%%%%%%%%%%%%%%%%%%%%%%%%%%%%%%%%%%%%%%%%
%%%%

%s4 #&#
\section{Auxiliary lemmas}
\label{s3}

In Sections \ref{s3} and \ref{s4} we prove Theorem
\ref{T15}. Therefore, we assume that its conditions are
satisfied. We consider the case $d<\infty$
assuming that the following conditions are
satisfied:
%
%e4.1 #&#
%
\begin{equation}
\mathbb Q^2=\mathbb I_d,\qquad \s^2 =1,\qquad d\ge5,\qquad
b=\sqrt N a.\label{eq33}
\end{equation}

Notice that the assumption $\s^2=1$ does not restrict generality
since from Theorem \ref{T15} with $\s^2=1$, we can derive the
general result
replacing $X$, $G$ by $X/\s$,
$G/\s$, etc. Other assumptions in \eqref{eq33} are included as
conditions in Theorem \ref{T15}.
Section \ref{s3} is devoted to some auxiliary lemmas which are similar to
corresponding lemmas of BG (\citeyear{BenGot97N1}).

In several places, the proof of Theorem \ref{T15} repeats almost
literally the proof of Theorem 1.5 in BG (\citeyear{BenGot97N1}).
Note, however,
that we use truncated vectors $X^\bullet_j$,\vspace*{-1pt} while in BG (\citeyear
{BenGot97N1})
the vectors $X^\diamond_j$ were involved. We start with
an application of the Fourier transform
to the functions $\Psi_b$ and $\Phi_b$, where $b=\sqrt N a$. We
estimate integrals over the Fourier transforms
using results of Sections \ref{s2}, \ref{s5}--\ref{s7} and some technical
lemmas of BG (\citeyear{BenGot97N1}).
We also apply some methods of estimation of the rate of
approximation in the CLT in multidimensional spaces; cf., for example,
Bhattacharya and Rao (\citeyear{BhaRan86}).

Below we use the following formula for the Fourier inversion; see,
for example, BG (\citeyear{BenGot97N1}). A smoothing inequality of
\citet{Pra72}
implies [see BG (\citeyear{BenGot96}), Section 4] that
%
%e4.2 #&#
%
\begin{equation}
F(x) = \frac{ 1 }{ 2 } + \frac{ i }{ 2\pi} \operatorname{V.P.} \int
_{|t|\le K} \operatorname{e} \{ -xt \} \widehat F (t)
\frac{ dt }{ t } +R \label{eq31}
\end{equation}
for any $K>0$ and any distribution
function $F$ with characteristic function~$\widehat F $ [see~\eqref{Fur}], where
%
%e4.3 #&#
%
\begin{equation}
\label{eq31o}|R|\leq\frac{ 1 }{ K } \int_{|t|\le K}\bigl |
\widehat F (t)\bigr |\, {dt}.
\end{equation}
Here $ \operatorname{V.P.} \int f(t)
\,dt =\lim_{\varepsilon\to0} \int_{|t| > \varepsilon}f(t) \,dt$
denotes the principal value of the integral.

In Sections \ref{s3} and \ref{s4}, we denote
%
%e4.4 #&#
%
\begin{equation}
X'= X^\bullet-\E X^\bullet+W,\label{eq120}
\end{equation}
where $W$ is a centered
Gaussian random vector which is independent of all other random
vectors and variables and is chosen so that $\cov X'= \cov G$.
Such a vector~$W$ exists by Lemma \ref{L24}. We define
$E_a^{\prime}(x) = \Theta_b^{\prime} (Nx)$ replacing $X$ by
$X^\prime$ in~\eqref{eq12} or \eqref{eq122} with $Y=G-a$.

Recall that the random vector $X^\bullet$ is defined in
\eqref{eq14t} and $Z_N^{(\bullet)}$ is a sum
of its $N$ independent copies. Similarly, $Z_N'={X_1'}+\cdots+X_N'$.
Write $\Psi^\bullet_b $
and $\Psi'_b $ for the distribution function of $\mathbb Q
[Z_N^{(\bullet)}-b ]$ and $\mathbb Q [Z_N'-b ]$, respectively.
For $0\leq k\leq N$ introduce the distribution function
%
%e4.5 #&#
%
\begin{equation}
\Psi^{(k)}_b (x) =\P\bigl\{ \mathbb Q \bigl[
{G}_{1}+\cdots+ {G}_{k} + X_{k+1}'
+\cdots+ X_{N}' -b \bigr]\leq x \bigr\}.
\label{eq316}
\end{equation}
Notice that
$\Psi^{(0)}_b=\Psi'_b$, $\Psi^{(N)}_b=\Phi_b $.

The proof of the following lemma repeats the proof of Lemma 3.1 of BG
(\citeyear{BenGot97N1}).
The difference is that here we use the truncated vectors $X_j^\bullet$ instead
of~$X_j^\diamond$.

%le12 #&#
%
\begin{lemma}\label{L31}Let $c_d$ be
a quantity depending on $d$ only. There exist positive quantities
$c_1(d)$ and $c_2(d)$ depending on $d$ only such that the
following statement is valid.
Let $\Pi_2^\bullet\leq c_1(d) p $
and let
an integer $1\leq m\leq N$
satisfy ${m\geq c_2(d) N \L_4^\bullet/p}$. Write
\[
K= c_0^2 /(2 m),\qquad t_1 =
c_d (p N/m )^{-1+2/d}.
\]
Let $F$ denote any of the
functions $\Psi^\bullet_b$, $\Psi'_b$, $\Psi^{(k)}_b$ or
$\Phi_b$. Then we have
%
%e4.6 #&#
%
\begin{equation}
F (x) = \frac{ 1 }{ 2 } + \frac{ i }{ 2\pi} \operatorname{V.P.}
\int
_{|t|\leq t_1 } \operatorname{e} \{ -x t K \} \widehat F (t K )
\frac{ dt }{ t } +R_1, \label{eq317}
\end{equation}
with $
|R_1|\ll_d (p N )^{-1} m (\det\mathbb C)^{-1/2}$.
\end{lemma}

\begin{pf}
 We assume that $ (p N )^{-1} m\le c_3(d)$ with
sufficiently small $c_3(d)$ since otherwise the statement of
Lemma \ref{L31} is trivial; see \eqref{MTq}, \eqref{eq31} and
\eqref{eq31o}. Let us prove \eqref{eq317}. We combine
\eqref{eq31} and Lemma \ref{L23}. Changing the variable $t= \tau
K $
in formula \eqref{eq31}, we obtain
%
%e4.7 #&#
%
\begin{equation}
F (x) = \frac{ 1 }{ 2 } + \frac{ i }{ 2\pi} \operatorname{V.P.}
\int
_{|t|\leq1} \operatorname{e} \{ -x t K \} \widehat F (t K )
\frac{ dt }{ t } +R, \label{eq319}
\end{equation}
where
%
%e4.8 #&#
%
\begin{equation}
\llvert R\rrvert\leq\int_{|t|\leq1 } \bigl| \widehat F (t K ) \bigr|\,dt. \label{eq319a}
\end{equation}
Notice that $\Psi_b^\bullet$,
$\Psi'_b$, $\Psi_b^{(k)}$ and $\Phi_b$ are distribution
functions of random variables which may be written in the following form:
%
%e4.9 #&#
%
\[
\mathbb Q [V+T ], \qquad V\= G_1 +\cdots+ G_k
+X_{k+1}^\bullet+\cdots+X_N^\bullet,
\]
with some $k$, $0\leq k\leq N$, and some random vector $T$ which
is independent of $X_j^\bullet$ and $G_j$, for all $j$. Let us
consider separately two possible cases, $k\geq N/2$ and
$k<N/2$.\vadjust{\goodbreak}

\textit{The case $k< N/2$}. Let
$Y$ denote a sum of $m$ independent copies of
$K^{1/2} X^\bullet$. Let ${Y}_1, {Y}_2,\ldots$
be independent copies of $Y$.
Then
we have
%
%e4.10 #&#
%
\begin{equation}
\mathcal L\bigl(K^{1/2} V\bigr) =\mathcal L( {Y}_{1}+
\cdots+ {Y}_{l} +T_1) \label{eq320}
\end{equation}
with $l= \lfloor N/(2 m)\rfloor$ and some
random $T_1$ independent of $ {Y}_{1},\ldots, {Y}_{l} $. By \eqref
{eq33ss} and by
Lemma \ref{L25}, we have
%
%e4.11 #&#
%
\begin{equation}
P\bigl( \delta, \mathcal S, c_0 \mathbb C^{-1/2} G
\bigr)\ge p\quad  \Longrightarrow\quad  P\bigl( 4 \delta, \mathcal S, \mathbb
C^{-1/2} \widetilde Y \bigr)\ge p/4\label{eq321}
\end{equation}
provided that
%
%e4.12 #&#
%
\begin{equation}
\Pi_2^\bullet\ll p/d^{3} \quad\mbox{and}\quad m\gg
d^6N \L_4^\bullet/ p.\label{eq322}
\end{equation}
The inequalities in \eqref{eq322} follow
from conditions of Lemma \ref{L31} if we choose some sufficiently
small (resp., large) $c_1(d)$ [resp. $c_2(d)$]. Due to
\eqref{eq33ss}, \eqref{eq33}, \eqref{eq320} and~\eqref{eq321},
we can apply Lemma \ref{L23} in order to estimate the integrals
in \eqref{eq319} and~\eqref{eq319a}.
Replacing in Lemma \ref{L23} $X$ by $Y$ and $N$ by $l$, we obtain
\eqref{eq317} in the case $k<N/2$.

\textit{The case $k\geq N/2$}. We can argue as in the previous
case defining\vspace*{1pt} now $Y$ as a sum of $m$ independent copies of
$K^{1/2} G$.
Condition $ P( 4 \delta, \mathcal S, \mathbb C^{-1/2}
\widetilde Y
)\ge p/4$ is satisfied by \eqref{eq33ss}, since now $\mathcal L(
\widetilde Y)=\mathcal L( c_0 G )$. %$\square$

Following BG (\citeyear{BenGot97N1}), introduce the upper bound
$\varkappa(
t; N, X )$ for the characteristic function of quadratic forms; {cf.
} \citet{Ben84} and Bentkus, G\" otze and Zitikis (\citeyear
{BenGotZit93}).
We define
$\varkappa( t; N, X ) =\varkappa^* ( t; N, X )
+ \varkappa^* ( t; N,G )$, where
%
%e4.13 #&#
%
\begin{equation}
\varkappa^* ( t; N, X) = \sup_{x\in\mathbb R^d } \bigl| \E\operatorname{e}
\bigl\{ t \mathbb Q [Z_j ]+ \langle x, Z_j \rangle
\bigr\} \bigr|,\qquad  Z_j= {X}_{1}+\cdots+ {X}_{j},\hspace*{-35pt}
\label{eq323}
\end{equation}
with $ j=
\lfloor(N-2)/14 \rfloor$. Note that $ |\E
\operatorname
{e} \{ t \mathbb Q [Z_j ]+ \langle x, Z_j \rangle
\} |= |\E\operatorname{e} \{ t \mathbb Q [Z_j
-y] \} |$ with $y=-\mathbb Q x/(2t)$. In the sequel, we
use that
%
%e4.14 #&#
%
\begin{equation}
\varkappa\bigl(t; N, X' \bigr)\leq\varkappa\bigl(t; N,
X^\bullet\bigr).\label{eq1app}
\end{equation}
For the proof, it suffices to note
that $X'=X^\bullet-\E X^\bullet+W$ and $W$ is independent of $
X^\bullet$.

%le13 #&#
%
\begin{lemma}\label{L32}
Let the conditions of Lemma $\ref{L31}$ be satisfied. Then
%
%e4.15 #&#
%
\begin{eqnarray}
\label{eee}&&\int_{|t|\leq t_1 }\bigl( |t| K \bigr)^{\alpha}
\varkappa\bigl(t K; N, X^\bullet\bigr) \frac{ dt }{ |t| }
\nonumber
\\[-8pt]
\\[-8pt]
\nonumber
& &\qquad\ll_{\a,d} (\det\mathbb C)^{-1/2} \cases{ (Np )
^{-\a}, \qquad \mbox{for } 0\leq\a< d/2, \vspace*{2pt}
\cr
(N p)
^{-\a} \bigl(1+ \bigl\llvert\log(N p/m)\bigr\rrvert\bigr),\vspace*{2pt}\cr
\hspace*{63pt}\mbox{for } \a= d/2,\vspace*{2pt}
\cr
(N p) ^{-\a} \bigl(1+(N
p/m)^{(2 \a- d)/d} \bigr), \vspace*{2pt}\cr
\hspace*{63pt}\mbox{for } \a> d/2. }
\end{eqnarray}
\end{lemma}

Lemma \ref{L32} is a generalization of Lemma 3.2 from BG (\citeyear
{BenGot97N1})
which contains the same bound for $0\leq\a< d/2$. In this paper,
we have to estimate the left-hand side of \eqref{eee} in the case
$d/2\leq\a$ too.

\begin{pf}
We assume again that $ (p N )^{-1} m\le c_3(d)$
with sufficiently small $c_3(d)$ since otherwise \eqref{eee} is an
easy consequence of $\llvert\varkappa\rrvert\le1$.

By \eqref{eq33ss} and \eqref{eq321}, the condition
$ P( 4 \delta, \mathcal S_o, K^{1/2} \mathbb C^{-1/2} \widetilde
{Z}_m^{(\bullet)} )\ge p/4$ is fulfilled. Therefore, collecting
independent copies of $K^{1/2} X^\bullet$
in groups as in \eqref{eq320}, we can
apply inequality \eqref{equ71p} of Lemma \ref{GZ}.
By \eqref{MTq}, \eqref{eq33ss} and \eqref{equ71p}, for any $\g>0$
and $|t|\leq t_1$,
\[
\varkappa^* \bigl(t K; N, X^\bullet\bigr) \ll_{\g, d} (p
N/m)^{-\g}+ \min\bigl\{ 1; (Np/m)^{-d/2} \llvert t\rrvert
^{-d/2} (\det\mathbb C)^{-
1/2} \bigr\}.
\]
We have used that $\sigma^2=1$ implies
$\sigma_1^2\asymp_d1$. A similar upper bound is valid for the
quantity $\varkappa^* (t K; N, G)$; cf. the proof of
\eqref{eq317} for $k>N/2$. Thus we get for any $\g>0$ and
$|t|\leq t_1$,
\[
\varkappa\bigl(t K; N, X^\bullet\bigr)\ll_{\g, d} (p
N/m)^{-\g}+ \min\bigl\{ 1; (\det\mathbb C)^{-1/2} \bigl(m/(|t|
p N) \bigr)^{d/2} \bigr\}.
\]
Integrating this bound
(cf. the estimation of $I_1$ in Lemma \ref{L23}), we obtain
\eqref{eee}.
\end{pf}

%s5 #&#
\section{\texorpdfstring{Proof of Theorem \protect\ref{T15}}
{Proof of Theorem 2.2}}\label{s4}

To simplify notation, in Section \ref{s4} we write
$\Pi=\Pi_2^{\bullet}$ and $\L=\L_4^{\bullet}$. The assumption
$\s^2=1$ and equalities $\E\| \mathbb C^{-1/2} X \|^2=d$,
\eqref{eq14t} and~\eqref{eq15t} imply
%
%e5.1 #&#
%
\begin{equation}
\Pi+\L N\gg1,\qquad \Pi+\L\leq1,\qquad \s_j^2 \leq1,\qquad \det
\mathbb C\le1. \label{eq34}
\end{equation}

Recall that $\Delta_N^{(a)}$ and functions $\Psi_b$, $\Phi_b$ and
$\Theta_b$ are defined in \eqref{eq121} and
\eqref{eq118}--\eqref{edg}. Note now that
$\Theta_b^{\bullet}(x)=E_a^{\bullet}(x/N)$ and, according to
\eqref{edg},
%
%e5.2 #&#
%
\begin{equation}
\D_N^{(a)}\le\D_{N,\bullet}^{(a)}+\sup
_{x\in
\mathbb R} \bigl|\Theta_b (x)-\Theta_b^{\bullet}
(x) \bigr|, \label{eq310w}
\end{equation}
where $b=\sqrt N a$ and
%
%e5.3 #&#
%
\begin{equation}
\D_{N,\bullet}^{(a)}= \sup_{x\in\mathbb R }\bigl |
\Psi_b (x) - \Phi_b(x)-\Theta_b^{\bullet}(x)
\bigr|. \label{eq310ww}
\end{equation}

Let us verify that
%
%e5.4 #&#
%
\begin{equation}
\sup_{x\in\mathbb R }\bigl |\Theta_b (x)- \Theta_b^\bullet(x)
\bigr|\ll_d \Pi_3^\bullet. \label{eq312}
\end{equation}
To this end we apply representation
\eqref{eq121}--\eqref{eq122}
of the Edgeworth correction as a signed measure and estimate the
variation of that measure. Indeed, using
\eqref{eq121}--\eqref{eq122}, we have
%
%e5.5 #&#
%
\begin{eqnarray}\label{eq1}
&\displaystyle\sup_{x\in\mathbb R
} \bigl|\Theta_b (x)- \Theta_b^\bullet(x)
\bigr|\ll N^{-1/2} I,&
\nonumber
\\[-8pt]
\\[-8pt]
\nonumber
&\displaystyle I\= \int_{\mathbb R^d} \bigl|\E
p'''(x) X^3- \E
p'''(x) {X^\bullet}^3
\bigr| \,dx.&
\end{eqnarray}
By the explicit formula
\eqref{eq123}, the function $u \na p'''(x) u^3$ is a $3$-linear
form in the variable $u$. Therefore, using
$X= X^\bullet+X_{\bullet} $ and $ \|X^\bullet\| \|X_{\bullet}\|=0 $,
we have $ p'''(x) X^3- p'''(x) {X^\bullet}^3 = p'''(x)
{X_\bullet^3} $, and
%
%e5.6 #&#
%
\begin{equation}
N^{-1/2} I\leq3d^{3/2} \Pi_3^\bullet
\int_{\mathbb R^d} \bigl( \bigl\| \mathbb C^{-1/2} x\bigr \| + \bigl\|
\mathbb C^{-1/2} x \bigr\|^3 \bigr) p(x) \,dx = c_d
\Pi_3^\bullet.\label{eq222}
\end{equation}
Inequalities
\eqref{eq1} and
\eqref{eq222} imply now \eqref{eq312}.

To prove the statement of Theorem $\ref{T15}$, we have to derive
that
%
%e5.7 #&#
%
\begin{equation}
\D_{N,\bullet}^{(a)}\ll_d ( \Pi+ \L) \bigl(1+\Vert a
\Vert\bigr)^3 (\det\mathbb C)^{-1/2}. \label{eq310}
\end{equation}
While proving \eqref{eq310} we assume that
%
%e5.8 #&#
%
\begin{equation}
\Pi\leq c_d \quad \mbox{and} \quad\L\leq c_d,
\label{eq314}
\end{equation}
with a
sufficiently small positive constant $c_d$ depending on $d$ only.
These assumptions do not restrict generality.
Indeed, we
have $ |\Psi_b (x) - \Phi_b(x) | \leq1$. If
conditions~\eqref{eq314} do not hold, then the estimate
%
%e5.9 #&#
%
\begin{equation}
\sup_{x\in\mathbb R } \bigl|\Theta_b^\bullet(x) \bigr|
\ll_d N^{-1/2} \E\bigl\|\mathbb C^{-1/2}
X^\bullet\bigr\|^3\ll_d \L^{1/2}
\label{eq315}
\end{equation}
immediately implies \eqref{eq310}.
In order to prove \eqref{eq315} we can use \eqref{eq15t} and representation
\eqref{eq121}--\eqref{eq122} of the Edgeworth correction.
Estimating the variation of that measure and using
%
%e5.10 #&#
%
\begin{eqnarray}
\E\bigl\|\mathbb C^{-1/2} X^\bullet\bigr\|^2&\leq&\E\bigl\|\mathbb
C^{-1/2} X \bigr\|^2=d,\label{eq310u}
\\
\bigl(\E\bigl\|\mathbb C^{-1/2} X^\bullet\bigr\|^3
\bigr)^2 &\leq&\E\bigl\|\mathbb C^{-1/2} X^\bullet
\bigr\|^2 \E\bigl\|\mathbb C^{-1/2} X^\bullet\bigr\|^4,
\label{eq310uu}
\end{eqnarray}
we obtain \eqref{eq315}.

It is clear that
%
%e5.12 #&#
%
\begin{eqnarray}\label{eq310x}
&&\D_{N,\bullet}^{(a)}\le\sup_{x\in\mathbb R} \bigl(\bigl |
\Psi_b (x)-\Psi_b' (x)\bigr |+\bigl |\Theta
_b^\bullet(x)-\Theta_b' (x)\bigr |
\nonumber
\\[-8pt]
\\[-8pt]
\nonumber
&&\hspace*{89pt}{}+ \bigl|\Psi_b' (x) - \Phi_b(x)-
\Theta'_b(x) \bigr| \bigr).
\end{eqnarray}
Similarly to \eqref{eq1}, we have
%
%e5.13 #&#
%
\begin{eqnarray}\label{eq2}
&\displaystyle\sup_{x\in\mathbb R
} \bigl|\Theta_b^\bullet(x)-
\Theta_b' (x) \bigr|\ll N^{- 1/2} J,&
\nonumber
\\[-8pt]
\\[-8pt]
\nonumber
 & \displaystyle J\= \int
_{\mathbb R^d} \bigl|\E p'''(x)
{X^\bullet}^3- \E p'''(x)
{X'}^3 \bigr| \,dx.&
\end{eqnarray}
Recall that vector $X'$ is
defined in \eqref{eq120}.
By Lemma \ref{L24}, we have\break
$\E\|\mathbb C^{-1/2} W \|^2\leq2d \Pi$ (hence, $\E
\|\mathbb C^{-1/2} W \|^q\ll_d \Pi^{q/2}$, for $0\le q\le2$).
Using the well-known equivalence of moments of Gaussian random
vectors, we conclude that
%
%e5.14 #&#
%
\begin{equation}
\E\bigl\|\mathbb C^{-1/2} W \bigr\|^q\ll_q \bigl(\E\bigl\|
\mathbb C^{-1/2} W \bigr\|^2 \bigr)^{q/2}
\ll_{q, d} \Pi^{q/2},\qquad  q\ge0.\label{eq442}
\end{equation}
Furthermore, according to
\eqref{eq14t}, \eqref{eq15t} and \eqref{eq314},
%
%e5.15 #&#
%
\begin{equation}
\label{eq234}\E\bigl\|\mathbb C^{-1/2} X_\bullet\bigr\| \ll_d
\Pi N^{-
1/2}\ll_d\Pi^{1/2}N^{-1/2}.
\end{equation}
Hence, by \eqref{eq15t}, \eqref{eq120}, \eqref{eq34}, \eqref
{eq442} and
\eqref{eq234},
%
%e5.16 #&#
%
\begin{equation}
\label{eq2345} \E\bigl\|X' \bigr\|^4\ll\ovln\beta\=\E\bigl\| \mathbb
C^{-
1/2} X' \bigr\|^4\ll_d N\L+
\Pi^2.
\end{equation}
Using \eqref{eq123}, \eqref{eq34}, \eqref{eq314},
\eqref{eq310u} and \eqref{eq2}--\eqref{eq234}, we get
%
%e5.17 #&#
%
\begin{eqnarray}\label{eq223}\qquad
N^{-1/2} J&\ll_d &\Pi^{1/2}\bigl(N^{-1/2}
\Pi+\Lambda^{1/2}\bigr) \int_{\mathbb R^d} \bigl( \bigl\|
\mathbb C^{-1/2} x \bigr\| + \bigl\| \mathbb C^{-1/2} x\bigr \|^3
\bigr) p(x)\,dx
\nonumber
\\[-8pt]
\\[-8pt]
\nonumber
&\ll_d& \Pi+\Lambda.
\end{eqnarray}
Thus, according to
\eqref{eq2} and \eqref{eq223},
%
%e5.18 #&#
%
\begin{equation}
\sup_{x\in\mathbb R
} \bigl|\Theta_b^\bullet(x)-
\Theta_b' (x) \bigr|\ll_d \Pi+\Lambda.
\label{eqpo}
\end{equation}

The same approach is applicable for the estimation of
$ |\Theta_b' |$. Using \eqref{eq121}--\eqref{eq123},
\eqref{eq120}, \eqref{eq34}, \eqref{eq310u}, \eqref{eq310uu},
\eqref{eq442} and \eqref{eq234}, we get
%
%e5.19 #&#
%
\begin{eqnarray}
\sup_{x\in\mathbb R
} \bigl|\Theta_b' (x)\bigr |&\ll&
N^{-1/2} \int_{\mathbb R^d} \bigl| \E p'''(x)
{X'}^3\bigr | \,dx
\nonumber
\\[-8pt]
\\[-8pt]
\nonumber
&\ll_d&\Lambda^{1/2} +N^{-1/2}\Pi^{3/2}.
\label{eq999}
\end{eqnarray}

Let us prove that
%
%e5.20 #&#
%
\begin{equation}
\sup_{x\in\mathbb R } \bigl|\Psi_b (x) - \Psi'_b
(x) \bigr|\ll(\det\mathbb C)^{-1/2} p^{-2} (\Pi+\L) \bigl(1+\Vert
a\Vert^2\bigr). \label{eq325}
\end{equation}
Using truncation [see
\eqref{eq28}], we have
$ | \Psi_b - \Psi_b^\bullet|\leq\Pi$,
and
%
%e5.21 #&#
%
\begin{equation}
\sup_{x\in\mathbb R } \bigl|\Psi_b (x) - \Psi'_b
(x) \bigr| \leq\Pi+ \sup_{x\in\mathbb R } \bigl| \Psi_b^\bullet(
x ) - \Psi'_b (x) \bigr|. \label{eq326}
\end{equation}
In order to estimate $| \Psi_b^\bullet- \Psi'_b |$, we
apply Lemmas \ref{L31} and \ref{L32}. The number $m$ in these
Lemmas exists and $N \L/p\gg_d 1$, as it follows from
\eqref{eq34} and \eqref{eq314}. Let us choose the minimal
$m$, that is,
$m\asymp_d N \L/p $. Then $(p N )^{-1} m \ll_d \L/p^2 $ and
$m/N\ll_d \L/p$. Therefore,
using
Lemma \ref{L31}, we have
%
%e5.22 #&#
%
\begin{eqnarray}\label{eq327}
&&\sup_x \bigl| \Psi_b^\bullet( x ) -
\Psi_b ' (x) \bigr|
\nonumber\hspace*{-35pt}
\\[-8pt]
\\[-8pt]
\nonumber
&&\qquad\ll_d p^{-2}
\L(\det\mathbb C)^{-1/2}+\int_{|t|\leq t_1}\bigl | \widehat
\Psi_b^\bullet(\tau)- \widehat\Psi'_b
(\tau)\bigr | \frac{ dt }{ |t| },\qquad \tau=t K.\hspace*{-35pt}
\end{eqnarray}

We shall prove that
%
%e5.23 #&#
%
\begin{equation}
\bigl|\widehat\Psi_b^\bullet(\tau)-\widehat
\Psi_b ' (\tau) \bigr| \ll_d \varkappa\Pi|\tau|
N \bigl(1+|\tau| N\bigr) \bigl(1+\Vert a\Vert^2\bigr) \label{eq328}
\end{equation}
with
$\varkappa= \varkappa(\tau; N, X^\bullet)$. Combining
\eqref{eq326}--\eqref{eq328}, using $\tau= t K$ and
integrating
inequality \eqref{eq328}
with the help of Lemma \ref{L32},
we derive \eqref{eq325}.

Let us prove \eqref{eq328}.
Writing $ D= Z_N^{(\bullet)} -\E Z_N^{(\bullet)}-b $,
we have
\[
Z_N^{(\bullet)}-b = D+\E Z_N^{(\bullet)},\qquad
\mathcal L\bigl(Z_N'-b\bigr) = \mathcal L(D+ \sqrt{N}
W)
\]
and
%
%e5.24 #&#
%
\begin{equation}
\label{eq327m} \bigl|\widehat\Psi_b^\bullet(\tau)-\widehat
\Psi'_b(\tau) \bigr| \leq\bigl| f_1(\tau) \bigr|+ \bigl|
f_2(\tau) \bigr|
\end{equation}
with
%
%e5.25 #&#
%
\begin{eqnarray}\label{eq329}
 f_1(\tau)&=& \E\operatorname{e} \bigl\{ \tau \mathbb Q [
D +\sqrt{N} W ] \bigr\}- \E\operatorname{e} \bigl\{ \tau \mathbb
Q [ D ]
\bigr\},
\nonumber
\\[-8pt]
\\[-8pt]
\nonumber
f_2(\tau)&=& \E\operatorname{e} \bigl\{ \tau \mathbb Q \bigl[ D +
\E Z_N^{(\bullet)} \bigr] \bigr\}- \E\operatorname{e} \bigl\{
\tau
\mathbb Q [ D] \bigr\}.
\end{eqnarray}
Now we have to prove that
both $ | f_1(\tau) |$ and $ | f_2(\tau) |$ may be
estimated by the right-hand side of \eqref{eq328}.

Let us consider $f_1$. We can write $\mathbb Q [ D +\sqrt{N}
W ]= \mathbb Q [ D ]+A+B$ with $A=2 \sqrt{N} \langle\mathbb
Q D, W\rangle$ and $B=N \mathbb Q [ W ]$. Taylor's
expansions of the exponent in \eqref{eq329} in powers of $i\tau
B$ and $i \tau A$ with remainders $\O( \tau B)$ and
$\O( \tau^2 A^2)$, respectively, imply (recall that $\E W=0$ and
$\mathbb Q^2=\mathbb I_d$)
%
%e5.26 #&#
%
\begin{equation}
\bigl| f_1(\tau) \bigr|\ll\varkappa |\tau| N \E\|W\|^2 +
\varkappa \tau^2 N \E\|W\|^2 \E\|D\|^2,
\label{eq330}
\end{equation}
where $\varkappa= \varkappa
(\tau; N, X^\bullet)$. The estimation of the remainders of these
expansions is based on the splitting
and conditioning techniques described in Section 9 of BG (\citeyear
{BenGot97N1});
see also Bentkus, G\" otze and Zaitsev (\citeyear{BenGotZai97}). Using
the relations
$\E\|W\|^2 \ll\E\|\mathbb C^{-1/2}W\|^2 \ll_d \Pi$, $\s^2=1$
and $\E\|D\|^2 \ll N(1+\Vert a\Vert^2)$, we derive from
\eqref{eq330} that
%
%e5.27 #&#
%
\begin{equation}
\bigl| f_1(\tau) \bigr| \ll_d \varkappa \Pi |\tau| N \bigl( 1
+ |\tau| N  \bigr) \bigl(1+\Vert a\Vert^2\bigr).\label{eq331}
\end{equation}
Note that $\E Z_N^{(\bullet)} = N \E
X^\bullet= -N \E X_\bullet$. Expanding the exponent $\operatorname
{e} \{
\tau \mathbb Q [ D + \E Z_N^{(\bullet)} ] \}$, using
\eqref{eq234} and proceeding similarly to the proof
of \eqref{eq331}, we obtain
%
%e5.28 #&#
%
\begin{equation}
\bigl| f_2(\tau) \bigr|\ll_d \varkappa \Pi|\tau| N\bigl(1+\Vert
a\Vert\bigr).\label{eq331f}
\end{equation}
Inequalities \eqref{eq327m}, \eqref{eq331}
and \eqref{eq331f} imply now \eqref{eq328}.

It remains to estimate $ |\Psi'_b - \Phi_b - \Theta_b' |$.
Recall that the distribution functions $\Psi_b^{(l)}(x)$, for $0\leq
l\leq N$,
are defined in \eqref{eq316}.

Fix an integer $k$, $1\leq k\leq N$. Clearly, we have
%
%e5.29 #&#
%
\begin{equation}
\label{eq156}\sup_{x\in\mathbb R } \bigl|\Psi'_b
(x) - \Phi_b(x) - \Theta_b' (x)\bigr |\le
I_1+I_2+I_3,
\end{equation}
where
%
%e5.30 #&#
%
\begin{eqnarray}
\label{eq156b}I_1&=&\sup_{x\in\mathbb R } \bigl|\Psi
_b^{(k)} (x) - \Phi_b(x)- (N-k)
\Theta_b' (x)/N \bigr|,
\\
\label{eq156a}I_2&=&\sup_{x\in\mathbb R } \bigl|
\Psi'_b (x) - \Psi_b^{(k)}(x)\bigr |
\end{eqnarray}
and
%
%e5.32 #&#
%
\begin{equation}
\label{eq156c}I_3=\sup_{x\in\mathbb R } kN^{-1} \bigl|
\Theta_b' (x) \bigr|.
\end{equation}

Let estimate $I_1$.
Define the distributions
%
%e5.33 #&#
%
\begin{eqnarray}
\label{eq123aq}\mu(A)&=& \P\Biggl\{ U_k +\sum
_{j=k+1}^N X_j' \in\sqrt{N}
A \Biggr\},
\nonumber
\\[-8pt]
\\[-8pt]
\nonumber
 \mu_0(A)&=& \P\{ U_N \in\sqrt{N}
A \}= \P\{ G \in A \},
\end{eqnarray}
where $U_l
=G_1+\cdots+G_l $. Introduce the measure $\chi'$ replacing $X$ by
$X'$ in \eqref{eq122}. For the Borel sets $A\subset\mathbb R^d$ define
the Edgeworth correction (to the distribution $\mu$) as
%
%e5.34 #&#
%
\begin{equation}
\mu_1^{(k)} (A)= (N-k) N^{-3/2}\chi'
(A)/6.
\end{equation}
Introduce
the signed measure
%
%e5.35 #&#
%
\begin{equation}
\nu=\mu-\mu_0-\mu_1^{(k)}.\label{eq122aq}
\end{equation}

It is easy to see that a re-normalization of random vectors
implies [see relations~\eqref{eq121},
{\eqref{eq118}--\eqref{edg}}, \eqref{eq316} and
\eqref{eq123aq}--\eqref{eq122aq}]
%
%e5.36 #&#
%
\begin{eqnarray}\label{eq156d}
\bigl|\Psi_b^{(k)} (x) - \Phi_b(x)- (N-k)
\Theta_b' (x)/N \bigr| &=&\nu\bigl( \bigl\{u\in\mathbb
R^d\dvtx\mathbb Q[u-a]\le x/N \bigr\} \bigr)
\nonumber\hspace*{-35pt}
\\[-8pt]
\\[-8pt]
\nonumber
&\le&\delta_N\= \sup_{A \subset\mathbb R^d } \bigl| \nu(A) \bigr|.\hspace*{-35pt}
\end{eqnarray}

%le14 #&#
%
\begin{lemma}\label{L94} Assume that $ d<\infty$
and $1\leq k\leq N$. Then there exists a $c(d)$ depending on $d$ only
and such that $\delta_N$
defined in $\eqref{eq156d}$ satisfies the inequality
%
%e5.37 #&#
%
\begin{equation}
\delta_N \ll_d \frac{ \ovln\b}{ N } + \frac{ N^{d/2} }{ k^{d/2} }
\exp\bigl\{ - c(d) k/\ovln\beta\bigr\} \label{eq914}
\end{equation}
with $\ovln\b=\E\|\mathbb C^{-1/2}X'\|^4$.
\end{lemma}

\textit{An outline of the proof}. We repeat and slightly improve the
proof of Lemma 9.4 in BG (\citeyear{BenGot97N1}); cf. the proof of
Lemma 2.5 in BG
(\citeyear{BenGot97N1}). We shall prove \eqref{eq914} assuming that
$\cov X= \cov
X'=\cov G=\mathbb I_d$. Applying it to $\mathbb C^{-1/2} X'$ and
$\mathbb C^{-1/2} G$, we obtain \eqref{eq914} in general case.\vadjust{\goodbreak}

While proving \eqref{eq914} we assume
that $\ovln\b/N \leq c_d$ and $N \geq1/ c_d$ with a sufficiently
small positive constant $c_d$. Otherwise \eqref{eq914} follows
from the obvious bounds $\ovln\b\ge\s^4=d^2$ and
\[
\delta_N\ll_d 1 + (\ovln\b/N)^{1/2} \int
_{\mathbb R^d
} \|x\|^3 p(x) \,dx \ll_d 1 + (
\ovln\b/N)^{1/2}.
\]

Set $n=N-k$.
Denoting by $Z_j^\prime$ and $U_j^\prime$ sums of $j$ independent
copies of $X^\prime$ and~$G^\prime$, respectively, introduce the
multidimensional characteristic functions
%
%e5.38 #&#
%
\begin{eqnarray}
g(t)&=&\E\operatorname{e} \bigl\{ \bigl\langle N^{-1/2} t, G\bigr
\rangle
\bigr\},\qquad h(t)=\E\operatorname{e} \bigl\{ \bigl\langle N^{-1/2} t,
X'\bigr\rangle\bigr\},\label{eqe1}
\\
\label{eq917a}f(t)&=& \E\operatorname{e} \bigl\{ \bigl\langle
N^{-1/2}
t, Z_{n}^\prime\bigr\rangle\bigr\}=h^{n}(t),
\nonumber
\\[-8pt]
\\[-8pt]
\nonumber
f_0(t)&=& \E\operatorname{e} \bigl\{ \bigl\langle N^{-1/2}
t, U_{n}^\prime\bigr\rangle\bigr\}=g^{n}(t),
\\
f_1(t)&=& n m(t) f_0(t)\qquad \mbox{where } m(t)=
\frac
{ 1 }{ 6 {N^{3/2}} } \E\bigl\langle i t, X^\prime\bigr
\rangle^3,
\\
\widehat\nu(t)&=&\bigl(f(t)-f_0(t)-f_1(t)\bigr) g(\rho
t), \qquad\rho^2 =k.\label{eq916s}
\end{eqnarray}
It is easy to see that
%
%e5.42 #&#
%
\begin{equation}
\label{Fur1}\widehat\nu(t)=\int_{\mathbb R^d} \operatorname
{e}\bigl
\{\langle t, x\rangle\bigr\} \nu(dx).
\end{equation}
Using a truncation, we obtain
%
%e5.43 #&#
%
\begin{equation}
\E\bigl\| Z_l^\prime/\sqrt N \bigr\|^\gamma
\ll_{\gamma,d} 1,\qquad  \gamma>0, 1\le l\le N. \label{eq915}
\end{equation}

By an extension of the proof of Lemma 11.6 in Bhattacharya and Rao
(\citeyear{BhaRan86}) [see also the proof of Lemma 2.5 in BG
(\citeyear{BenGot96})], we obtain
%
%e5.44 #&#
%
\begin{equation}
\delta_N\ll_d \max_{|\a| \leq2d} \int
_{\mathbb R^d} \bigl|\partial^\a\widehat\nu(t) \bigr| \,dt.
\label{eq916}
\end{equation}
Here
$|\a|=|\a_1 |+\cdots+|\a_d |$, $\a=(\a_1,\ldots,\a_d)$,
$\a_j\in\mathbb Z$, $\a_j\ge0$. In order to derive~\eqref{eq914}
from \eqref{eq916}, it suffices to prove that, for $|\a|\leq
2d$,
%
%e5.45 #&#
%e5.46 #&#
%
\begin{eqnarray} \quad 
\bigl|\partial^\a\widehat\nu(t)\bigr |&\ll_d& g( c_1
\rho t), \label{eq917}
\\
\qquad\bigl|\partial^\a\widehat\nu(t) \bigr| &\ll_d& \ovln\b
N^{-1} \bigl(1+\|t\|^{6} \bigr) \exp\bigl\{ -
c_2 \|t\|^2\bigr\}\qquad \mbox{for } \|t\|^2
\leq c_3(d) N/\ovln\b.\label{eq918}
\end{eqnarray}
Indeed, using \eqref{eq917} and denoting
$T= \sqrt{c_3(d) N/\ovln\b}$, we obtain
%
%e5.47 #&#
%
\begin{eqnarray}\label{eq919}
\qquad\int_{\|t\|\geq T} \bigl|\partial^\a\widehat\nu(t) \bigr| \,dt
&\ll_d& \int_{\|t\|\geq T} g( c_1\rho t) \,dt
\nonumber
\\[-8pt]
\\[-8pt]
\nonumber
&
\ll_d& \frac{ N^{d/2} }{ \rho^{ d} } \exp\biggl\{ - \frac{ c_1^2
\rho^{2} T^2 }{ 8N } \biggr\}
\int_{\mathbb R^d} \exp\bigl\{ - c_1^2 \|t
\|^2/8\bigr\} \,dt,
\end{eqnarray}
and it is easy to see that
the right-hand side of \eqref{eq919} is bounded from above by the
second summand on the right-hand side of \eqref{eq914}.
Similarly, using \eqref{eq918}, we can integrate
$ |\partial^\a\widehat\nu(t) |$ over $\|t\|\leq T$, and the
integral is bounded from above by $c_d \ovln\b/N$.

In the proof of \eqref{eq917}--\eqref{eq919} we applied standard
methods of estimation which are provided in Bhattacharya and Rao
(\citeyear{BhaRan86}). In particular, we used a Bergstr\" om type identity
%
%e5.48 #&#
%
\begin{equation}\qquad
f-f_0-f_1=\sum_{j=0}^{n-1}(h-g-m)
h^j g^{n-j-1} +\sum_{j=0}^{n-1}
m\sum_{l=0}^{j-1}(h-g) h^l
g^{n-l-1}, \label{eqe2}
\end{equation}
relations \eqref{eqe1}--\eqref{eq915}, $1\leq
k\leq N$, $ |\partial^\a\exp\{ -c_4 \|t\|^2\} |\ll_\a
\exp\{ -c_5 \|t\|^2\}$,\break  ${\sqrt{N}/\ovln\b^{1/2}\gg_d1}$ and
$y^{c_d} \exp\{ -y\} \ll_d 1 $, for $y>0$. %$\square$

Applying \eqref{eq156b}, \eqref{eq156d}
and Lemma \ref{L94}, we get
%
%e5.49 #&#
%
\begin{equation}
I_1\ll_d \frac{ \ovln\b}{ N } + \frac{ N^{d/2} }{ k^{d/2} } \exp
\bigl\{ - c(d) k/\ovln\b\bigr\}. \label{eq340}
\end{equation}

For the estimation of $I_2$ we shall use Lemma \ref{L93} which is
an easy consequence of BG [(\citeyear{BenGot97N1}), Lemma 9.3], \eqref
{eq1app} and
\eqref{eq2345}.

%le15 #&#
%
\begin{lemma}\label{L93}
We have
\[
\bigl|\widehat\Psi_b' (t)-\widehat\Psi_b^{(l)}(t)
\bigr| \ll\varkappa t^2 l \bigl( \ovln\b+|t| N \ovln\b+|t| N
\sqrt{N \ovln\b} \bigr) \bigl(1+\Vert a\Vert^3\bigr)\qquad \mbox{for }0\leq
l\leq N,
\]
where $\varkappa= \varkappa( t; N, X^\bullet)$; cf. \eqref{eq323}.
\end{lemma}

As in the proof of \eqref{eq327},
applying Lemma \ref{L31} [choosing $m\asymp_d N (\L+\Pi) /p $]
and using \eqref{eq33ss},
we obtain
\[
I_2\ll_d (\L+\Pi) (\det\mathbb C)^{-1/2} + \int
_{|t|\leq t_1
} \bigl|\widehat\Psi'_b (\tau) -
\widehat\Psi_b^{(k)} (\tau)\bigr | \,dt /|t|,\qquad \tau=t K.
\]
The existence of such an $m$ is ensured
by \eqref{eq33ss}, \eqref{eq34} and \eqref{eq314}. Applying
Lemma~\ref{L93} and replacing in that lemma $t$ by $\tau$, we
have
%
%e5.50 #&#
%
\begin{equation}
\label{eq48} \bigl|\widehat\Psi_b '(\tau) - \widehat
\Psi_b^{(k)}(\tau)\bigr | \ll\varkappa \tau^2 k
\bigl( \ovln\b+| \tau| N \ovln\b+| \tau| N \sqrt{N \ovln\b} \bigr) \bigl
(1+\Vert
a\Vert^3\bigr).
\end{equation}
Integrating with the help of Lemma \ref{L32} and using \eqref{eq33ss},
we obtain
%
%e5.51 #&#
%
\begin{eqnarray}\label{eq339}
I_2&\ll_d &(\det\mathbb C)^{-1/2} \bigl(\Pi+\L+ k
N^{-2} ( \ovln\b+ \sqrt{N \ovln\b} )
\nonumber
\\[-8pt]
\\[-8pt]
\nonumber
&&{}\times
 \bigl( 1 + (\Pi+
\L)^{-1/d} \bigr) \bigl(1+\Vert a\Vert^3\bigr) \bigr).
\end{eqnarray}

Let us choose
$k\asymp_d N^{1/4} {\ovln\b}^{3/4} $.
Such $k\leq N$ exists by $\ovln\b\gg_d\s^4=1$, by \eqref{eq2345}
and by
assumption \eqref{eq314}.
Then \eqref{eq340} and \eqref{eq339} turn
into
%
%e5.52 #&#
%
\begin{equation}
I_1\ll_d \frac{ \ovln\b}{ N } + \biggl( \frac{ N }{ {\ovln\b} }
\biggr)^{3d/8} \exp\biggl\{ -c_d \biggl( \frac{ N }{ {\ovln\b} }
\biggr)^{1/4} \biggr\} \ll_d \frac{ \ovln\b}{ N }
\label{eq4899}
\end{equation}
and
%
%e5.53 #&#
%
\begin{eqnarray}\label{eq3399}
I_2&\ll_d &(\det\mathbb C)^{-1/2} \biggl(\Pi+\L+
\biggl( \biggl( \frac{ \ovln\b}{ N } \biggr)^{5/4}+ \biggl(
\frac{
\ovln\b}{ N } \biggr)^{7/4} \biggr)
\nonumber
\\[-8pt]
\\[-8pt]
\nonumber
&&{}\times \bigl( 1 + (\Pi+
\L)^{-1/d} \bigr) \bigl(1+\Vert a\Vert^3\bigr) \biggr).
\end{eqnarray}
Using \eqref{eq33ss},
\eqref{eq314},
\eqref{eq2345} and \eqref{eq3399}, we get
%
%e5.54 #&#
%
\begin{equation}
I_2\ll_d (\det\mathbb C)^{-1/2} \biggl(\Pi+\L+
\frac{ \ovln\b}{ N } \bigl(1+\Vert a\Vert^3\bigr) \biggr).
\label{eq33999}
\end{equation}

Finally, by \eqref{eq314}, \eqref{eq2345}, \eqref{eq999} and \eqref
{eq156c},
%
%e5.55 #&#
%
\begin{equation}
I_3\ll_d \frac{ k }{ N } \bigl(\Lambda^{1/2}
+N^{-1/2}\Pi^{3/2} \bigr) \ll\Lambda+\Pi.\label{eq777}
\end{equation}

Inequalities \eqref{eq314}, \eqref{eq310x}, \eqref{eq2345}, \eqref{eqpo},
\eqref{eq325}, \eqref{eq156},
\eqref{eq4899}, \eqref{eq33999} and \eqref{eq777} imply now \eqref{eq310}
[and, hence, \eqref{eq18w}] by an application of
$\Pi+\L\leq1$. Note that, by \eqref{eq15t}, we have $\Pi\leq\Pi
_3^\bullet$.
Together with
\eqref{eq310w} and \eqref{eq312}, inequality \eqref{eq310}
yields~\eqref{eq18}. The statement of Theorem \ref{T15}
is proved.
\end{pf}

%
%%%%%%6%%%%%%%%%%%%%%%%%%%%%%%%%%%%%%%%%%%%%%%%%%%%%%%%%%%%%%%%%%%%%%%%%%%%%%%%%%%%%%
%%%%

%s6 #&#
\section{From probability to number theory}
\label{s5}

In Section \ref{s5} we reduce the estimation of the integrals of
the modulus of characteristic functions $\widehat\Psi_b(t)$ to the
estimation the integrals of some theta-series. We shall use the
following lemmas.

%le16 #&#
%
\begin{lemma}[{[BG (\citeyear{BenGot97N1}), Lemma 5.1]}]\label{L51}
Let $L,C\in
\mathbb
R^d$ and let $\mathbb Q\dvtx\mathbb R^d\to\mathbb R^d$ be a symmetric
linear operator.
Let $Z,U,V$ and $ W$ denote independent random vectors taking
values in $\mathbb R^d$. Denote by
\[
P(x) = \langle\mathbb Q x,x\rangle+ \langle L,x\rangle+C,\qquad x \in
\mathbb
R^d,
\]
a real-valued polynomial of second order. Then
\[
2 \bigl|\E\operatorname{e} \bigl\{ t P(Z+U+V+W) \bigr\} \bigr|^2 \leq\E
\operatorname{e} \bigl\{ 2 t \langle\mathbb Q \widetilde
Z,\widetilde U\rangle
\bigr\} + \E\operatorname{e} \bigl\{ 2 t \langle\mathbb Q
\widetilde Z,
\widetilde V\rangle\bigr\}.
\]
\end{lemma}

Let $\delta>0$, $\mathcal S = \{ e_{1}, \ldots, e_{s} \} \subset
\mathbb R^d$ and let $\mathbb D\dvtx\mathbb R^d\to\mathbb R^d$ be a linear
operator.
Usually, we take $\mathbb D=\mathbb C^{-1/2}$. Denote
%
%e6.1 #&#
%
\begin{equation}
\label{eq7yy} \qquad{\bolds{\Gamma}} (\delta;\mathbb D, \mathcal S)=
\bigl\{(z_1,\ldots,z_s)\dvtx z_j\in\mathbb
R^d, \|\mathbb Dz_j -e_j\|\leq\delta,
\mbox{ for all }1\leq j\leq s \bigr\}.
\end{equation}
Recall that $\mathcal S_o=\{e_{1}, \ldots, e_{s}\}\subset\mathbb R^d
$ denotes
an orthonormal system.

Let $ \{\varepsilon_{jk}, j=1, 2\ldots, s; k=1,
2\ldots\}\cup\{\varepsilon_{jk}', j=1, 2\ldots, s; k=1,
2\ldots\}$ be i.i.d. symmetric Rademacher random
variables.

%le17 #&#
%
\begin{lemma}\label{L63} Assume that $\mathbb Q^2= \mathbb I_d$
and that the condition $ P(\delta, \mathcal S,\mathbb D
\widetilde X
)\ge p$ holds with some $p>0$ and $\delta>0$. Write $m = \lfloor
{p N }/ (5 s) \rfloor$.
Then, for
any $0<A\leq B$, $b\in\mathbb R^d$ and $\gamma>0$, we have
%
%e6.2 #&#
%
\begin{equation}
\label{eq71qq}\int_A^B\bigl | \widehat
\Psi_b (t) \bigr| \frac{ dt }{ | t | } \leq I+ c_\g(s) (p
N)^{-\g} \log\frac{ B }{ A },
\end{equation}
with
%
%e6.3 #&#
%
\begin{equation}
\label{pppp} I= \sup_\Gamma\sup_{b\in\mathbb R^d} \int
_A^B \sqrt{\varphi_b ( t/4)}
\frac{ dt }{ | t | },\qquad \varphi_b (t) \= \bigl| \E\operatorname{e} \bigl
\{ t \mathbb Q [Y + b] \bigr\} \bigr|^2,
\end{equation}
where $Y= \sum_{k=1}^{m}U_{k}$
denote a sum of independent (non i.i.d.) vectors
$U_k=\sum_{j=1}^{s}\varepsilon_{jk}z_{jk}$, and
$\sup_\Gamma$ is taken over all
$ \{(z_{1k},\ldots,z_{sk}) \in
{\bolds{\Gamma}} (\delta;\mathbb D, \mathcal S ), k=\break 1,\ldots
, m \}$.
\end{lemma}

Lemma \ref{L63} is an analogue of Corollary 6.3 from BG (\citeyear
{BenGot97N1}).
Its proof is even simpler than that in BG (\citeyear{BenGot97N1}).
Therefore it is
omitted.

%le18 #&#
%
\begin{lemma}\label{L73}
Assume that $\mathbb Q^2= \mathbb I_d$
and that the condition $ P(\delta, \mathcal S,\mathbb D
\widetilde X
)\ge p$ holds with some $p>0$ and $\delta>0$. Let
%
%e6.4 #&#
%
\begin{equation}
\label{dfn}n \= \bigl\lfloor{p N}/({16 s}) \bigr\rfloor\ge1.
\end{equation}
Then, for
any $0<A\leq B$, $b\in\mathbb R^d$ and
$\gamma>0$,
%
%e6.5 #&#
%
\begin{equation}
\int_A^B\bigl | \widehat\Psi_b (t)
\bigr| \frac{
dt }{ | t | } \leq c_\g(s) (p N)^{-\g} \log
\frac{ B }{ A } + \sup_\Gamma\int_A^B
\sqrt{\E\operatorname{e} \bigl\{ t \bigl\langle\mathbb Q
\widetilde W,\widetilde
W' \bigr\rangle/2 \bigr\}} \frac{ dt }{ | t | },\label{eq71}\hspace*{-35pt}
\end{equation}
and
for any fixed $t\in\mathbb R$,
%
%e6.6 #&#
%
\begin{equation}
\bigl| \widehat\Psi_b (t)\bigr | \leq c_\g(s) (p
N)^{-\g}+ \sup_\Gamma\sqrt{\E\operatorname{e} \bigl\{
t \bigl\langle\mathbb Q \widetilde W,\widetilde W' \bigr\rangle/2
\bigr\}},\label{equ71}
\end{equation}
where
$W= {V}_{1}+\cdots+ {V}_{n} $ and $W'=V_1'+\cdots+ V_n' $ are
independent sums
of independent copies of random vectors
$V=\sum_{j=1}^{s}\varepsilon_{j1}z_{j}$ and
$V'=\sum_{j=1}^{s}\varepsilon'_{j1}z'_{j}$, and
$\sup_\Gamma$ is taken over all
$ (z_{1},\ldots,z_{s}), (z'_{1},\ldots,z'_{s}) \in
{\bolds{\Gamma}} (\delta;\mathbb D, \mathcal S )$.
\end{lemma}

Note that this lemma will be proved for
general $\mathcal S$, but in this paper we need $\mathcal S=\mathcal S_o$
only. Moreover, a more careful estimation of
binomial probabilities could allow us to replace $c_\g(s) (p N)^{-\g
}$ in
\eqref{eq71qq}, \eqref{eq71} and \eqref{equ71}
by
$c (s) \exp\{ -cp N \}$; see, for example, \citet{NagChe05}.
However, we do not need to use this improvement.

\begin{pf*}{Proof of Lemma \ref{L73}} Inequality \eqref{equ71} is
an analogue of the statement of Lemma 7.3 from BG (\citeyear
{BenGot97N1}). Its
proof is even simpler than that in BG (\citeyear{BenGot97N1}).
Therefore it is
omitted.

Let us show that
%
%e6.7 #&#
%
\begin{equation}
\int_A^B \bigl| \widehat\Psi_b (t)
\bigr| \frac{ dt }{ | t | } \leq c_\g(s) (p N)^{-\g} \log
\frac{ B }{ A } + \sup_\Gamma\int_A^B
\sqrt{\E\operatorname{e} \bigl\{ t \bigl\langle\mathbb Q
\widetilde W,\widetilde
W' \bigr\rangle/2 \bigr\}} \frac{ dt }{ | t | },\label{eq71q}\hspace*{-35pt}
\end{equation}
where
$W= V_1+\cdots+ V_n$ and $W'=V_1'+\cdots+ V_n' $ are independent
sums of independent (non i.i.d.) vectors $V_k=\sum
_{j=1}^{s}\varepsilon_{jk}z_{jk}$, and
$V'_k=\sum_{j=1}^{s}\varepsilon'_{jk}z'_{jk}$, respectively, while
$\sup_\Gamma$ is taken over all
$ \{(z_{1k},\ldots,z_{sk}), (z'_{1k},\ldots,z'_{sk}) \in
{\bolds{\Gamma}} (\delta;\break \mathbb D, \mathcal S ), k=1,\ldots
,n \}$.

Comparing \eqref{eq71} and \eqref{eq71q}, we see that inequality
\eqref{eq71q} is related to sums of \textit{non i.i.d.} vectors
$\{V_j\}$ and $\{V_j'\}$ while inequality \eqref{eq71} deals with
{i.i.d.} vectors. Nevertheless, we derive \eqref{eq71} from
\eqref{eq71q}.

While proving \eqref{eq71q} we can assume that $p N\geq c_s$
with a sufficiently large constant $c_s$, since otherwise \eqref{eq71q}
is obviously valid.

Let $\varphi_b(t)$ be defined in \eqref{pppp}, where $Y=
\sum_{k=1}^{m}U_{k}$ is a sum of independent (non i.i.d.)
vectors $U_k=\sum_{j=1}^{s}\varepsilon_{jk}z_{jk}$, where $
\{(z_{1k},\ldots,z_{sk}) \subset
{\bolds{\Gamma}} (\delta;\mathbb D, \mathcal S ), k=1,\ldots
,m \}$,
$m = \lfloor
{p N }/ (5 s) \rfloor$.

We shall
apply the symmetrization Lemma \ref{L51}.
Split
$ Y=T+T_1+T_2 $ into
sums of independent sums of independent summands
so that each of the sums $ T$, $T_{1}$ and $ T_{2}$ contains
$n=\lfloor p N /(16 s) \rfloor$
independent summands
$U_j$. Such an $n$ exists since $p N\geq c_s$
with a sufficiently large $c_s$.
Lemma \ref{L51} implies that
%
%e6.8 #&#
%
\begin{equation}
\label{434}2 \varphi_b (t) \leq\E\operatorname{e} \bigl\{ 2 t
\langle\mathbb Q \widetilde T,\widetilde T_{1} \rangle\bigr\} + \E
\operatorname{e} \bigl\{ 2 t \langle\mathbb Q \widetilde T,
\widetilde
T_{2} \rangle\bigr\}.
\end{equation}
Inequality \eqref{eq71q} follows now from \eqref{434} and
Lemma \ref{L63}.

Let now $W= V_1+\cdots+ V_n$ and $W'=V_1'+\cdots+ V_n' $ be
independent sums
of independent vectors $V_k=\sum_{j=1}^{s}\varepsilon_{jk}z_{jk}$, and
$V'_k=\sum_{j=1}^{s}\varepsilon'_{jk}z'_{jk}$, respectively, with $
\{(z_{1k},\ldots,z_{sk})$, $(z'_{1k},\ldots,z'_{sk}) \in
{\bolds{\Gamma}} (\delta;\mathbb D, \mathcal S ), k=1,\ldots
,n \}$.

Using that all random vectors $\widetilde V_k$ are symmetrized and have
nonnegative characteristic functions and applying H\"older's
inequality, we obtain, for each~$t$,
%
%e6.9 #&#
%e6.10 #&#
%e6.11 #&#
%e6.12 #&#
%
\begin{eqnarray}
\label{l1} \E\operatorname{e} \bigl\{ t \bigl\langle\mathbb Q
\widetilde W,
\widetilde W' \bigr\rangle\bigr\} &=& {\mathbf E}_{\widetilde W'}
\Biggl(\prod_{k=1}^n {\mathbf
E}_{\widetilde V_k}\operatorname{e} \bigl\{ t \bigl\langle\mathbb Q
\widetilde
V_k,\widetilde W' \bigr\rangle\bigr\} \Biggr)
\\
& \le& \Biggl(\prod_{k=1}^n {\mathbf
E}_{\widetilde W'} \bigl({\mathbf E}_{\widetilde V_k} \operatorname
{e} \bigl\{ t
\bigl\langle\mathbb Q \widetilde V_k,\widetilde W'
\bigr\rangle\bigr\} \bigr)^n \Biggr)^{1/n}
\\
&=& \Biggl(\prod_{k=1}^n {\mathbf
E}_{\widetilde W'} \bigl({\mathbf E}_{\widetilde T_k}\operatorname{e}
\bigl\{ t
\bigl\langle\mathbb Q \widetilde T_k,\widetilde W'
\bigr\rangle\bigr\} \bigr) \Biggr)^{1/n}
\\
& = & \Biggl(\prod_{k=1}^n \E
\operatorname{e} \bigl\{ t \bigl\langle\mathbb Q \widetilde T_k,
\widetilde W' \bigr\rangle\bigr\} \Biggr)^{1/n},\label{l2}
\end{eqnarray}
where $\widetilde T_k \=\sum_{l=1}^n \widetilde V_{kl}$ denotes a sum
of i.i.d. copies $\widetilde
V_{kl}$ of
$\widetilde V_k$ which are independent
of all other random vectors and variables.

Repeating the steps \eqref{l1}--\eqref{l2} for each factor $\E
\operatorname{e}
\{ t
\langle\mathbb Q \widetilde T_k,\widetilde W' \rangle\}$ instead of
the expectation $ \E
\operatorname{e} \{ t
\langle\mathbb Q \widetilde W,\widetilde W' \rangle\} $ on the
right-hand side
separately, we get (with $\widetilde T'_{i} \=\sum_{l=1}^{n}
\widetilde{V}'_{i
l}$, where $\widetilde{V}'_{i l}$ are i.i.d. copies of $\widetilde{V}'_{i}$
independent of all other random vectors)
%
%e6.13 #&#
%
\begin{equation}
\E\operatorname{e} \bigl\{ t \bigl\langle\mathbb Q \widetilde W
,\widetilde
W' \bigr\rangle\bigr\} \le\Biggl(\prod
_{k=1}^n \prod_{i=1}^{n}
\E\operatorname{e} \bigl\{ t \bigl\langle\mathbb Q \widetilde T_k,
\widetilde T'_i \bigr\rangle\bigr\}
\Biggr)^{1/n^2}.\label{qwe1}
\end{equation}

Thus, using \eqref{qwe1} and the arithmetic-geometric mean inequality,
we have
%
%e6.14 #&#
%
\begin{eqnarray}\label{qwe}
&&\int_A^B \sqrt{\E\operatorname{e} \bigl\{ t
\bigl\langle\mathbb Q \widetilde W,\widetilde W' \bigr\rangle/2
\bigr\}} \frac{ dt }{ | t | } \nonumber\\
&&\qquad\le\int_A^B
\Biggl(\prod_{k=1}^n \prod
_{i=1}^{n} \E\operatorname{e} \bigl\{ t \bigl\langle
\mathbb Q \widetilde T_k,\widetilde T'_i
\bigr\rangle/2\bigr\} \Biggr)^{1/2n^2} \frac{ dt }{ | t | }
\nonumber
\\[-8pt]
\\[-8pt]
\nonumber
&&\qquad\le \frac{ 1 }{ n^2 } \sum_{k=1}^n
\sum_{i=1}^{n} \int_A^B
\bigl(\E\operatorname{e} \bigl\{ t \bigl\langle\mathbb Q
\widetilde
T_k,\widetilde T'_i \bigr\rangle/2
\bigr\} \bigr)^{1/2} \frac{ dt }{ |
t | }
\\
&&\qquad\le \sup_\Gamma\int_A^B
\sqrt{\E\operatorname{e} \bigl\{ t \bigl\langle\mathbb Q
\widetilde T,\widetilde
T' \bigr\rangle/2 \bigr\}} \frac{ dt }{ | t | },\nonumber
\end{eqnarray}
where
$T= U_1+\cdots+ U_n$ and $T'=U_1'+\cdots+ U_n' $ are
independent sums of independent copies of random vectors
$U=\sum_{j=1}^{s}\varepsilon_{j1}z_{1}$ and
$U'=\sum_{j=1}^{s}\varepsilon'_{j1}z'_{1}$, and
$\sup_\Gamma$ is taken over all
$ (z_{1},\ldots,z_{s}), (z'_{1},\ldots,z'_{s}) \in
{\bolds{\Gamma}} (\delta;\mathbb D, \mathcal S )$.
Inequalities \eqref{eq71q}
and~\eqref{qwe} imply now the statement of the lemma.
\end{pf*}

The following Lemma \ref{Le32} provides a Poisson summation
formula.

%le19 #&#
%
\begin{lemma}\label{Le32}
Let $\operatorname{Re} z > 0, a,b \in{\mathbb R}^s$ and
$\mathbb S\dvtx{\mathbb R}^s \rightarrow\mathbb R^s$ be a positive
definite symmetric nondegenerate linear operator. Then
\begin{eqnarray*}
&&\sum_{m \in\mathbb{Z}^s} \exp\bigl\{-z \mathbb S[m+a] +
2 \pi i \langle m,b\rangle\bigr\}
\\
&&\qquad =  \bigl(\det(\mathbb S / \pi) \bigr)^{-1/2} z^{-s/2} \exp
\bigl\{ - 2 \pi i \langle a,b\rangle\bigr\} \\
&&{}\qquad\quad\times\sum
_{l
\in\mathbb{Z}^s} \exp\biggl\{- \frac{ \pi^2 }{ z } \mathbb
S^{-1}[l + b] -2\pi i \langle a, l\rangle\biggr\},
\end{eqnarray*}
where $\mathbb S^{-1}\dvtx{\mathbb R}^s \rightarrow\mathbb R^s$
denotes the inverse positive definite operator for $\mathbb S$.
\end{lemma}

\begin{pf}
See, for example, \citet{Fri82}, {page} 116, or \citet{Mum83},
{page}~189, formula (5.1); and {page} 197, formula (5.9).\vadjust{\goodbreak}
\end{pf}

Let the conditions of Lemma \ref{L73} be satisfied. Introduce
one-dimensional lattice probability distributions $H_n=\mathcal
L(\xi_n)$ with integer valued $\xi_n$ setting
\[
\P\{\xi_n=k \}= A_n n^{-1/2} \exp\bigl
\{-k^2/2n \bigr\} \qquad\mbox{for } k\in\mathbb Z.
\]
It is easy to see that
${A_n\asymp1}$. Moreover, by Lemma \ref{Le32},
%
%e6.15 #&#
%
\begin{equation}
\widehat H_n(t)\ge0 \qquad\mbox{for all } t\in\mathbb R.\label{eq76}
\end{equation}
Introduce the $s$-dimensional random vector $\zeta_n$ having as
coordinates independent copies of $\xi_n$. Then, for
$m=(m_1,\ldots,m_s)\in\mathbb Z^s$, we have
%
%e6.16 #&#
%
\begin{equation}
\label{qm} q(m)\=\P\{\zeta_n=m \}=A_n^s
n^{-s/2} \exp\bigl\{-\Vert m\Vert^2/2n \bigr\}.
\end{equation}

%le20 #&#
%
\begin{lemma}\label{L75}
Let
$W=V_1+\cdots+ V_n $ and $W'=V_1'+\cdots+ V_n' $
denote independent sums of independent copies of random vectors
$V$ and $V'$ such that
\[
V=\varepsilon_{11} z_1+ \cdots+\varepsilon_{s1}
z_s,\qquad V'=\varepsilon'_{11}
z_1'+ \cdots+\varepsilon'_{s1}
z_s',
\]
with some
$z_j,z_j'\in\mathbb R^d $. Introduce the matrix $\mathbb B_t= \{
b_{ij}(t)\dvtx1\leq i,j\leq s\}$ with $b_{ij}(t)= t \langle\mathbb
Qz_i,z_j' \rangle$. Then
\[
\E\operatorname{e} \bigl\{ t \bigl\langle\mathbb Q\widetilde W
,\widetilde
W' \bigr\rangle/4 \bigr\} \ll_s\E\operatorname{e}
\bigl\{ \bigl\langle\mathbb B_t \zeta_n,
\zeta'_n \bigr\rangle\bigr\}+ \exp\{-c n \}\qquad
\mbox{ for all } t\in\mathbb R,
\]
where $\zeta'_n$ are independent copies of $\zeta_n$ and $c$ is a
positive absolute constant.
\end{lemma}

\begin{pf}
Without loss of generality, we assume that $n\ge
c_1$, with a sufficiently large absolute constant $c_1$. Consider
the random vector $Y=(\widetilde\varepsilon_1,\ldots,\break \widetilde
\varepsilon_s)\in\mathbb R^s$
with coordinates which are symmetrizations of i.i.d. Rademacher
random variables. Let $R=(R_1, \ldots,R_s)$ and $T$ denote
independent sums of $n$ independent copies of $Y/2$. Then we can
write
%
%e6.17 #&#
%
\begin{equation}
\label{rav} \E\operatorname{e} \bigl\{ t \bigl\langle\mathbb
Q\widetilde W,
\widetilde W' \bigr\rangle/4 \bigr\} = \E\operatorname{e} \bigl\{
\langle\mathbb B_t R,T \rangle\bigr\}\qquad \mbox{ for all } t\in
\mathbb R.
\end{equation}
Note that the scalar product
$\langle\cdt, \cdt\rangle$ in $\E\operatorname{e} \{
\langle\mathbb B_t R,T \rangle\}$ means the scalar product
of vectors in $\mathbb R^s$. In order to estimate this
expectation, we write it in the form
%
%e6.18 #&#
%
\begin{eqnarray}
\E\operatorname{e} \bigl\{ \langle\mathbb B_t R,T \rangle\bigr
\} &=& \E{\mathbf E}_R \operatorname{e} \bigl\{ \langle\mathbb
B_t R,T \rangle\bigr\}
\nonumber
\\
&=&\sum_{{\ov m}\in\mathbb Z^s}p({\ov m})\sum
_{m\in\mathbb Z^s}p(m) \operatorname{e} \bigl\{ \langle\mathbb
B_t m,{\ov m} \rangle\bigr\}, \label{eq77}
\end{eqnarray}
with summing over $m=(m_1,\ldots,m_s)\in\mathbb Z^s$,
$\ov{m}=(\ov{m}_1,\ldots,\ov{m}_s)\in\mathbb Z^s$ and
%
%e6.19 #&#
%
\begin{equation}
\label{pm} p(m)=\P\{R=m \}=\prod_{j=1}^s
\P\{R_j=m_j \} =\prod_{j=1}^s
2^{-2n}\pmatrix{2n\cr
m_j+n},
\end{equation}
if
$\max_{1\le j\le s}|m_j|\le n$ and $p(m)=0$ otherwise.
Clearly, for fixed $T=\ov{m}$,
%
%e6.20 #&#
%
\begin{equation}
{\mathbf E}_R \operatorname{e} \bigl\{ \langle\mathbb
B_t R,T \rangle\bigr\} =\sum_{m\in\mathbb Z^s}p(m)
\operatorname{e} \bigl\{ \langle\mathbb B_t m,{\ov m} \rangle
\bigr\}\ge0 \label{eq78}
\end{equation}
is a value of the characteristic function of symmetrized
random vector $\mathbb B_t R$. Using Stirling's formula, it is
easy to show that there exist positive absolute constants $c_2$
and $c_3$ such that
%
%e6.21 #&#
%
\begin{equation}
\P\{R_j=m_j \}\ll n^{-1/2} \exp\bigl
\{-m_j^2/2n \bigr\} \qquad\mbox{for } |m_j|\le
c_2n\label{eq79}
\end{equation}
and
%
%e6.22 #&#
%
\begin{equation}
\P\bigl\{|R_j|\ge c_2n \bigr\}\ll\exp\{-c_3n \}.
\label{eq710}
\end{equation}
Using \eqref{eq77}--\eqref{eq710}, we obtain
%
%e6.23 #&#
%
\begin{eqnarray}\label{eq712}
\E\operatorname{e} \bigl\{ \langle\mathbb B_t R,T \rangle
\bigr\} &\ll_s&\sum_{{\ov m}\in\mathbb Z^s}q(\ov m) \sum
_{m\in\mathbb Z^s}p(m) \operatorname{e} \bigl\{ \langle\mathbb
B_t m,{\ov m} \rangle\bigr\}+ \exp\{-c_3n \}
\nonumber
\\
& =&\sum_{m\in\mathbb Z^s}p(m) \sum
_{{\ov m}\in\mathbb Z^s}q(\ov m) \operatorname{e} \bigl\{ \langle
\mathbb
B_t m,{\ov m} \rangle\bigr\}+ \exp\{-c_3n \}
\nonumber
\\[-8pt]
\\[-8pt]
\nonumber
& =&\E{\mathbf E}_{\zeta_n} \operatorname{e} \bigl\{ \langle
\mathbb
B_t R,\zeta_n \rangle\bigr\}+ \exp
\{-c_3n \}
\\
& =&\E\operatorname{e} \bigl\{ \langle\mathbb B_t R,
\zeta_n \rangle\bigr\}+ \exp\{-c_3n \}.
\nonumber
\end{eqnarray}
Now we repeat our previous arguments, noting that
%
%e6.24 #&#
%
\begin{equation}
{\mathbf E}_{\zeta_n} \operatorname{e} \bigl\{ \langle\mathbb
B_t R,\zeta_n \rangle\bigr\} =\sum
_{\ov m\in\mathbb Z^s}q(\ov m) \operatorname{e} \bigl\{ \langle
\mathbb
B_t R,{\ov m} \rangle\bigr\}\ge0 \label{eq713}
\end{equation}
is a value of the
nonnegative characteristic function of the random vector
$\zeta_n$; see~\eqref{eq76}. Using again \eqref{eq79} and
\eqref{eq710}, we obtain
%
%e6.25 #&#
%
\begin{equation}
\E\operatorname{e} \bigl\{ \langle\mathbb B_t R,
\zeta_n \rangle\bigr\} \ll_s\E\operatorname{e} \bigl\{
\bigl\langle\mathbb B_t \zeta_n,
\zeta'_n \bigr\rangle\bigr\}+ \exp\{-c_3
n \}. \label{eq714}
\end{equation}
Relations \eqref{rav}, \eqref{eq712} and
\eqref{eq714} imply the statement of the lemma.
\end{pf}

Let us estimate the expectation $ \E\operatorname{e} \{
\langle\mathbb B_t \zeta_n,\zeta'_n \rangle\}$ under the
conditions of Lemmas~\ref{L73} and~\ref{L75}, assuming that
$s=d$, $\mathbb D=\mathbb C^{-1/2}$, $\delta\leq1/(5 s)$, $n\ge
c_4$, where $c_4$ is a sufficiently large absolute constant, and $
(z_{1},\ldots,z_{s}), (z'_{1},\ldots,z'_{s}) \in
{\bolds{\Gamma}} (\delta;\mathbb D, \mathcal S )$, that is,
%
%e6.26 #&#
%
\begin{equation}
\bigl\|\mathbb C^{-1/2}z_j-e_j\bigr\|\leq\delta,\qquad \bigl\|
\mathbb C^{-1/2}z_j'-e_j\bigr\|\leq
\delta \qquad\mbox{for } 1\leq j\leq s,\label{eq7.6}
\end{equation}
with an orthonormal system $\mathcal
S=\mathcal S_o= \{e_1,\ldots,e_s \} $ involved in the
conditions of Lemma \ref{L73}. We can rewrite $\E\operatorname
{e} \{ \langle\mathbb B_t \zeta_n,\zeta'_n \rangle\}$
as
\[
\E\operatorname{e} \bigl\{ \bigl\langle\mathbb B_t
\zeta_n,\zeta'_n \bigr\rangle\bigr\} =
\sum_{{\ov m}\in\mathbb Z^s}q({\ov m})\sum
_{m\in
\mathbb
Z^s}q(m) \operatorname{e} \bigl\{ \langle\mathbb
B_t \ov m,m \rangle\bigr\}.
\]
Thus, by \eqref{qm},
\[
\E\operatorname{e} \bigl\{ \bigl\langle\mathbb B_t
\zeta_n,\zeta'_n \bigr\rangle\bigr\}
=A_n^{2s} n^{-s} \sum
_{{\ov m}\in\mathbb
Z^s}\sum_{m\in\mathbb Z^s} \exp\bigl\{ i
\langle\mathbb B_t \ov m,{m}\rangle-\Vert m\Vert^2/2n
-\Vert\ov m\Vert^2/2n \bigr\}.
\]

Denote
%
%e6.27 #&#
%
\begin{equation}
\label{defr}r=\sqrt{2 \pi^2n}.
\end{equation}
Applying Lemma \ref{Le32} with $\mathbb S=\mathbb I_s$,
$z=1/2n$, $a=0$, $b=(2\pi)^{-1} \mathbb B_t \ov m $ and
using that ${A_n\asymp1}$, we obtain
%
%e6.28 #&#
%
\begin{eqnarray}\label{koren1}
\label{koren} &&\E\operatorname{e} \bigl\{ \bigl\langle\mathbb B_t
\zeta_n,\zeta'_n \bigr\rangle\bigr\}\nonumber \\
&&\qquad
\ll_s n^{-s/2} \sum_{l,m \in\mathbb{Z}^s} \exp
\bigl\{-2 \pi^2n \bigl\Vert l+(2\pi)^{-1}\mathbb
B_t m\bigr\Vert^2-\Vert m\Vert^2/2n \bigr\}
\\
&&\qquad\ll_s r^{-s}\sum_{m,\ov m \in\mathbb{Z}^{s}} \exp
\bigl\{-r^2 \Vert m-t \mathbb V \ov m\Vert^2-\Vert\ov
m\Vert^2/r^2 \bigr\},\nonumber
\end{eqnarray}
where $\mathbb V\dvtx\mathbb
R^{s}\to\mathbb R^{s}$ is the operator with matrix
%
%e6.29 #&#
%
\begin{equation}
\label{bbbl} \mathbb V=(2\pi)^{-1}\mathbb B_1.
\end{equation}
Note that
the right-hand side of \eqref{koren1} may be considered as a
theta-series.

Denote $y_k=\mathbb C^{-1/2}z_k$, $1\leq k\leq s$. Let $\mathbb Y$
be the $(s\times s)$-matrix with entries $\langle e_j,
y_k\rangle$, where index $j$ is the number of the row, while $k$
is the number of the column. Then the matrix $\mathbb F\=\mathbb
Y^*\mathbb Y$ has entries $\langle y_j, y_k\rangle$. Here
$\mathbb Y^*$ is the transposed matrix for $\mathbb Y$. According
to \eqref{eq7.6}, we have
%
%e6.30 #&#
%
\begin{equation}
\|y_j-e_j\|\leq\delta\qquad \mbox{for } 1\leq j\leq s.
\label{eq76o}
\end{equation}

Let us show that [cf. BG (\citeyear{BenGot97N1}), proof of Lemma 7.4]
%
%e6.31 #&#
%
\begin{equation}
\label{bbl} \Vert\mathbb Y\Vert\le3/2 \quad\mbox{and}\quad \Vert\mathbb Y^{-1}
\Vert\le2.
\end{equation}
Since $\mathcal
S_o= \{e_1,e_2,\ldots,e_s \} $ is an orthonormal system,
inequalities \eqref{eq76o} imply that $\mathbb Y=\mathbb
I_s+\mathbb A$ with some matrix $\mathbb A=\{a_{ij}\}$ such that
$|a_{ij}|\leq\delta$. Thus, we have $\|\mathbb A\| \leq\|\mathbb
A\|_2\leq s \delta$, where $\|\mathbb A\|_2$ denotes the
Hilbert--Schmidt norm of the matrix~$\mathbb A$. Therefore, the
condition $\delta\leq1/(5 s)$ implies $\|\mathbb A\| \leq1/2$ and
inequalities \eqref{bbl}.

The matrix $\mathbb F$ is symmetric and positive definite. Its
determinant is the product of eigenvalues which [by \eqref{bbl}]
are bounded from above and from below by some absolute positive
constants. Moreover,
%
%e6.32 #&#
%
\begin{equation}
\label{dett3} (\det\mathbb Y )^2= \bigl(\det\mathbb Y^*
\bigr)^2=\det\mathbb F \asymp_s1\asymp\Vert\mathbb F\Vert
\asymp\Vert\mathbb Y\Vert.
\end{equation}
Define
the matrices $\ov{\mathbb Y}$ and $\ov{\mathbb F}$, replacing
$z_j$ by $z_j'$ in the definition of ${\mathbb Y}$ and ${\mathbb
F}$.
Similarly to \eqref{dett3}, one can show that
%
%e6.33 #&#
%
\begin{equation}
\label{dett4} (\det\ov{\mathbb Y} )^2= \bigl(\det\ov{\mathbb Y} ^*
\bigr)^2=\det\ov{\mathbb F} \asymp_s1\asymp\Vert\ov{
\mathbb F}\Vert\asymp\Vert\ov{\mathbb Y}\Vert.
\end{equation}
Let $\mathbb G$ and
$\ov{\mathbb G}$ be the $(s\times s)$-matrices with entries
$\langle e_j,\mathbb Qz _k\rangle$ and $\langle e_j,
z'_k\rangle$, respectively. Then, clearly, $\mathbb G=\mathbb
Q\mathbb C^{1/2}\mathbb Y$ and $\ov{\mathbb G}=\mathbb
C^{1/2}\ov{\mathbb Y}$. Therefore,
%
%e6.34 #&#
%
\begin{equation}
\label{dett5} \mathbb B_1=\mathbb G^*\ov{\mathbb G}=\mathbb Y^*
\mathbb C^{1/2}\mathbb Q\mathbb C^{1/2}\ov{\mathbb Y}.
\end{equation}

Moreover, $\mathbb Q^2= \mathbb I_d$ implies that $ |\det
\mathbb Q |= 1$ and $\Vert\mathbb Q\Vert=1$.
Using relations \eqref{bbbl}
and \eqref{dett3}--\eqref{dett5}, we
obtain
%
%e6.35 #&#
%
\begin{equation}
\label{ett7} |\det\mathbb V |\asymp_s |\det\mathbb B_1
|\asymp_s\det\mathbb C
\end{equation}
and
%
%e6.36 #&#
%
\begin{equation}
\label{ett8} \Vert\mathbb V\Vert\ll\Vert\mathbb B_1\Vert\ll\Vert
\mathbb C\Vert\ll\sigma_1^2.
\end{equation}
%

%s7 #&#
\section{Some facts from number theory}
\label{s6}

In Section \ref{s6}, we consider some facts of the geometry of
numbers; see \citet{Dav58} or~\citet{Cas59}. They will help
us to estimate the integrals of the right-hand side of inequality~\eqref{koren1}. See G\"otze and Margulis (\citeyear{GotMar10}) or G\"
otze and
Zaitsev (\citeyear{GotZai10}) for a more detailed version of this section.

Let $e_1, e_2,\ldots, e_d$ be linearly independent vectors in
$\R^d$. The set
%
%e7.1 #&#
%
\begin{equation}
\Lambda= \Biggl\{{ %\tsize
\sum_{j=1}^d}
n_j e_j\dvtx n_j\in\mathbb Z, j=1,2,\ldots,d
\Biggr\}
\end{equation}
is called the lattice with basis $e_1, e_2,\ldots, e_d$. The
determinant $\det(\Lambda)$ of a lattice $\Lambda$ is the modulus
of the determinant of the matrix formed from the vectors $e_1,
e_2,\ldots, e_d$. If $\Lambda=\mathbb A \mathbb Z^d$, where
$\mathbb A$ is a nondegenerate linear operator, then $\det
(\Lambda)=\llvert\det\mathbb A\rrvert$.

Let $F\dvtx{\R}^d \rightarrow[0, \infty)$ denote a norm on
${\R}^d$. The successive minima $M_1 \leq\cdots\leq M_d$ of $F$
with respect to a lattice $\Lambda\subset{\R}^d$ are defined as
follows: $M_j$ is the infimum of $\lambda> 0$ such that the set
$ \{m \in\Lambda\dvtx F(m) < \lambda\}$ contains $j$
linearly independent vectors. The following Lemma \ref{Dav2} is
proved by Davenport [(\citeyear{Dav58}), Lemma 1] for $\Lambda={\Z
}^d$; see also
G\"otze and Margulis (\citeyear{GotMar10}).

%le21 #&#
%
\begin{lemma} \label{Dav2}
Let $M_1 \leq\cdots\leq M_d$
be the successive minima of a norm $F$ with
respect to a lattice $\Lambda\subset{\R}^d$. Denote
$M_{d+1}=\infty$. Suppose that $1\le j\le d$ and $M_j \leq b \leq
M_{j+1}$, for some ${b>0}$.
Then
%
%e7.2 #&#
%
\begin{equation}
\# \bigl\{m=(m_1, \ldots, m_d) \in\mathbb{Z}^d\dvtx F(m) < b \bigr
\} \asymp_d b^j (M_1\cdt
M_2 \cdots M_j)^{-1}.
\end{equation}
\end{lemma}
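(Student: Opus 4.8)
The plan is to follow Davenport (1958). Fix linearly independent $b_1,\dots,b_d\in\mathbb Z^d$ with $F(b_i)=M_i$ (these exist, as noted before the statement), and split the estimate into a lower and an upper bound. For the \textbf{lower bound}: by the triangle inequality and homogeneity of $F$, any $m=\sum_{i=1}^j n_i b_i$ with $n_i\in\mathbb Z$ satisfies $F(m)\le\sum_{i=1}^j|n_i|\,M_i$, hence $F(m)<b$ whenever $|n_i|<b/(j\,M_i)$ for all $i$. Since $M_i\le M_j\le b$ for $i\le j$, the number of such integer tuples is $\ge\prod_{i=1}^j\max\{1,b/(j\,M_i)\}\gg_d b^j/(M_1\cdots M_j)$, and distinct tuples give distinct lattice points by linear independence of the $b_i$. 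This is the required lower bound.

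For the \textbf{upper bound} I would first reduce to the full-dimensional case. If $m\in\mathbb Z^d$ satisfies $F(m)<b$, write $m=\sum_{i=1}^d\lambda_i b_i$ with $\lambda_i\in\mathbb R$ and let $k$ be the largest index with $\lambda_k\neq0$. Then $b_1,\dots,b_{k-1},m$ are linearly independent, so $F(m)\ge M_k$ by the definition of $M_k$; since $M_k\le F(m)<b\le M_{j+1}$ this forces $k\le j$. Hence every such $m$ lies in the rational subspace $V\=\operatorname{span}_{\mathbb R}(b_1,\dots,b_j)$, and it suffices to bound $\#\{m\in\Lambda_V:F(m)<b\}$, where $\Lambda_V\=\mathbb Z^d\cap V$ is a lattice of rank $j$ whose successive minima with respect to $F|_V$ are exactly $M_1\le\cdots\le M_j$ (upper bound: $b_1,\dots,b_i\in\Lambda_V$ realize $M_i$; lower bound: $\Lambda_V\subseteq\mathbb Z^d$), and now $b\ge M_j$, the \emph{largest} minimum of $\Lambda_V$.

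In this full-dimensional case $b$ dominates all minima of $\Lambda_V$, so $\{x\in V:F(x)<b\}$ is not thin in any direction relative to $\Lambda_V$. Minkowski's second theorem, applied in $V$ to the symmetric convex body $\{x\in V:F(x)<1\}$ and the lattice $\Lambda_V$, gives $\operatorname{vol}_V(\{F<1\})\asymp_d\det(\Lambda_V)/(M_1\cdots M_j)$. By reduction theory $\Lambda_V$ has a basis $c_1,\dots,c_j$ with $F(c_i)\ll_d M_i\le M_j\le b$; let $D$ be the corresponding fundamental parallelepiped, so $\operatorname{vol}_V(D)=\det(\Lambda_V)$ and $\tfrac12 D\subseteq\{x:F(x)<C_d\,b\}$ for a dimensional constant $C_d$, while the translates $\{m+\tfrac12 D:m\in\Lambda_V\}$ are pairwise disjoint. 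For each $m\in\Lambda_V$ with $F(m)<b$ we have $m+\tfrac12 D\subseteq\{x:F(x)<(1+C_d)b\}$, and disjointness yields
$$\#\{m\in\Lambda_V:F(m)<b\}\cdot 2^{-j}\det(\Lambda_V)\le\operatorname{vol}_V\!\bigl(\{x:F(x)<(1+C_d)b\}\bigr)\ll_d b^j\,\det(\Lambda_V)/(M_1\cdots M_j).$$
Dividing through gives $\#\{m\in\Lambda_V:F(m)<b\}\ll_d b^j/(M_1\cdots M_j)$, which together with the reduction and the lower bound proves the lemma.

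Everything above is elementary once one grants the two classical inputs from the geometry of numbers, both with constants depending only on the dimension: Minkowski's second theorem, and the existence of a lattice basis whose $F$-lengths are comparable to the successive minima (equivalently, that $\{F<b\}$ with $b\ge M_j$ contains a fundamental domain of $\Lambda_V$ up to dimensional scaling). I expect the main obstacle to be precisely the uniformity of these facts in the dimension — without it the ``$\asymp_d$'' cannot be obtained — while the confinement step (that $F(m)<b$ forces $m$ into $\operatorname{span}(b_1,\dots,b_j)$) is the key maneuver that reduces the general statement to the situation those inputs handle directly.
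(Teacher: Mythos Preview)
Your proof is correct and follows Davenport's original argument, which is precisely what the paper does: it does not give its own proof of this lemma but simply cites Davenport (1958, Lemma~1). Your confinement step (showing that $F(m)<b\le M_{j+1}$ forces $m\in\operatorname{span}(b_1,\dots,b_j)$), the reduction to the full-rank sublattice $\Lambda_V$, and the volume comparison via Minkowski's second theorem together with a reduced basis are exactly the classical ingredients; the dimensional constants in $\asymp_d$ come from those two inputs, as you note.
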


Representing $\Lambda=\mathbb A \mathbb Z^d$, we see that the
lattice $\Lambda=\mathbb Z^d$ may be replaced in Lemma \ref{Dav2}
by any lattice $\Lambda\subset\R^d$.

%le22 #&#
%
\begin{lemma} \label{Dav9}
Let $ F_j (m)$, $j=1,2$, be some norms in ${\R}^d$ and $M_1
\leq\cdots\leq M_d$ and $N_1 \leq\cdots\leq N_d$ be the
successive minima of $F_1$ with respect to a lattice $\Lambda_1$
and of $F_2$ with respect to a lattice $\Lambda_2$, respectively.
Let $C>0$. Assume that $M_k\gg_d C F_2(n_k)$, $k=1,2,\ldots,d$, for some
linearly independent vectors $n_1,n_2,\ldots,n_d\in\Lambda_2$.
Then
%
%e7.3 #&#
%
\begin{equation}
M_k\gg_d C N_k, \qquad k=1,\ldots,d.
\end{equation}
\end{lemma}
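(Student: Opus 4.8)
The plan is to reduce the statement to an elementary monotonicity property of successive minima combined with a one-line pigeonhole argument on indices; no deep input (not even Lemma~\ref{Dav2}) is needed.

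First I would record the following observation, valid for any norm $F_2$ and lattice $\Lambda_2$: if $v_1,\dots,v_d\in\Lambda_2$ are linearly independent and $\pi$ is a permutation of $\{1,\dots,d\}$ ordering them so that $F_2(v_{\pi(1)})\le F_2(v_{\pi(2)})\le\dots\le F_2(v_{\pi(d)})$, then $N_k\le F_2(v_{\pi(k)})$ for every $k$. Indeed, for any $\eps>0$ the set $\{m\in\Lambda_2:\,F_2(m)<F_2(v_{\pi(k)})+\eps\}$ contains the $k$ linearly independent vectors $v_{\pi(1)},\dots,v_{\pi(k)}$, so by definition of the successive minima $N_k\le F_2(v_{\pi(k)})+\eps$; letting $\eps\to 0$ gives the claim. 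This uses only the definition of $N_k$ and not the attainment of the infimum.

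Now apply this with $v_j=n_j$: choose a permutation $\sigma$ with $F_2(n_{\sigma(1)})\le\dots\le F_2(n_{\sigma(d)})$, so that $N_k\le F_2(n_{\sigma(k)})$ for all $k$. Fix $k$ and consider the $d-k+1$ distinct indices $\sigma(k),\sigma(k+1),\dots,\sigma(d)$; they cannot all lie in the $(d-k)$-element set $\{k+1,\dots,d\}$, hence there is an index $i$ with $k\le i\le d$ and $\sigma(i)\le k$. Since $M_1\le\dots\le M_d$ is non-decreasing, $M_k\ge M_{\sigma(i)}$, and the hypothesis (applied with the index $\sigma(i)$) gives $M_{\sigma(i)}\gg_d C\,F_2(n_{\sigma(i)})$. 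Finally $i\ge k$ together with the ordering of $\sigma$ yields $F_2(n_{\sigma(i)})\ge F_2(n_{\sigma(k)})\ge N_k$. Chaining these, $M_k\ge M_{\sigma(i)}\gg_d C\,F_2(n_{\sigma(i)})\ge C\,N_k$, with the same dimensional constant as in the hypothesis, which is the assertion.

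There is essentially no obstacle; the only point requiring a little care is the bookkeeping in the pigeonhole step — one must pick the selection index $i$ so that $i\ge k$ and $\sigma(i)\le k$ hold \emph{simultaneously}, which is why one looks at the $d-k+1$ largest-$F_2$ vectors among the $n_j$ rather than the $k$ smallest. Everything else is immediate from the definition of successive minima and the monotonicity of the $M_j$.
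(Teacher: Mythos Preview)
Your proof is correct. The paper itself omits the proof of this lemma, declaring it ``elementary and therefore omitted,'' so there is no approach to compare against; your pigeonhole argument on the permutation ordering the $F_2(n_j)$ is exactly the kind of elementary justification the authors had in mind, and it goes through cleanly with the same implicit constant as in the hypothesis.
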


%le23 #&#
%
\begin{lemma} \label{Dav5}
Let $\Lambda$ be a lattice in ${\R}^d$ and let $c_j(d)$,
$j=1,2, 3$, be positive quantities depending on $d$ only. Let
$ F (\cdt)$ be a norm in ${\R}^d$ such that $F(\cdt)\asymp_d
\Vert\cdt\Vert$. Then
%
%e7.4 #&#
%
\begin{eqnarray}
\sum_{v \in\Lambda} \exp\bigl\{- c_1(d) \Vert v
\Vert^2 \bigr\} &\asymp_d& \sum
_{v \in\Lambda} \exp\bigl\{- c_2(d) \bigl(F(v)
\bigr)^2 \bigr\}
\nonumber
\\[-8pt]
\\[-8pt]
\nonumber
&\asymp_d&\# \bigl\{v \in\Lambda\dvtx F(v)< c_3(d) \bigr\}.
\end{eqnarray}
\end{lemma}

For a lattice $\Lambda\subset\mathbb R^{d}$ and $1\le l\le d$, we
define its $\alpha_l$-characteristics by
%
%e7.5 #&#
%
\begin{equation}
\label{alp} \alpha_l( \Lambda)\= \sup\bigl\{ \bigl|\det\bigl(
\Lambda'\bigr)\bigr |^{-1}\dvtx\Lambda' \mbox{ is
a $l$-dimensional sublattice of $\Lambda$} \bigr\}.
\end{equation}
Denote
%
%e7.6 #&#
%
\begin{equation}
\label{alp3} \alpha( \Lambda)\= \max_{1\le l\le d}
\alpha_l( \Lambda).
\end{equation}

%le24 #&#
%
\begin{lemma} \label{Dav4}
Let $ F (\cdt)$ be a norm in ${\R}^d$ such that $F(\cdt)\asymp_d
\Vert\cdt\Vert$. Let $c(d)$
be a positive quantity depending on $d$ only.
Let $M_1 \leq\cdots\leq M_d$ be the successive minima of $F$ with
respect to a lattice $\Lambda\subset\R^d$. Then
%
%e7.7 #&#
%
\begin{equation}
\label{LLL1}\alpha_l( \Lambda)\asymp_d (M_1
\cdt M_2 \cdots M_l)^{-1},\qquad  l=1,\ldots,d.
\end{equation}
Moreover,
%
%e7.8 #&#
%
\begin{equation}
\label{LLL2}\alpha( \Lambda)\asymp_d \# \bigl\{v \in\Lambda
\dvtx \Vert v
\Vert< c(d) \bigr\},
\end{equation}
provided that
$M_1\ll_d1$.
\end{lemma}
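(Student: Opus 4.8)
The plan is to establish \eqref{LLL1} first, purely from the geometry of numbers, and then to deduce \eqref{LLL2} from it by combining with Lemma~\ref{Dav2} and Corollary~\ref{Dav5}. For \eqref{LLL1} I would prove the two inequalities separately. For the lower bound on $\alpha_l(\Lambda)$, fix linearly independent $b_1,\dots,b_d\in\Lambda$ with $F(b_j)=M_j$ (these exist, as recalled before Lemma~\ref{Dav2}) and take the $l$-dimensional sublattice $\Lambda_0=\mathbb Z b_1+\dots+\mathbb Z b_l$; by Hadamard's inequality and $F\asymp_d\|\cdot\|$, $\det(\Lambda_0)\le\|b_1\|\cdots\|b_l\|\ll_d M_1\cdots M_l$, and since $\Lambda_0$ is admissible in \eqref{alp} this gives $\alpha_l(\Lambda)\ge\det(\Lambda_0)^{-1}\gg_d(M_1\cdots M_l)^{-1}$. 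For the upper bound, take an \emph{arbitrary} $l$-dimensional sublattice $\Lambda'\subset\Lambda$: its successive minima with respect to the restriction of $F$ to $\operatorname{span}\Lambda'$ are pointwise $\ge M_1,\dots,M_l$ (passing to a sublattice can only raise the minima), while the convex body $\{x\in\operatorname{span}\Lambda':F(x)\le1\}$ has $l$-dimensional volume $\asymp_d1$ since $F\asymp_d\|\cdot\|$ and $l\le d$; Minkowski's second theorem, applied in the $l$-dimensional space $\operatorname{span}\Lambda'$, then yields $\det(\Lambda')\gg_d M_1\cdots M_l$. Taking the supremum over all such $\Lambda'$ gives $\alpha_l(\Lambda)\ll_d(M_1\cdots M_l)^{-1}$, proving \eqref{LLL1}.

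For \eqref{LLL2}, \eqref{alp3} and \eqref{LLL1} give $\alpha(\Lambda)\asymp_d\max_{1\le l\le d}(M_1\cdots M_l)^{-1}$; since the $M_i$ are non-decreasing, the partial product $M_1\cdots M_l$ decreases while the last factor is $\le1$ and increases afterwards, so this maximum is $\asymp_d(M_1\cdots M_j)^{-1}$ with $j$ the number of minima that are $\le1$ (the empty product being $1$). On the other side, by Corollary~\ref{Dav5} the count $\#\{v\in\Lambda:\|v\|<c(d)\}$ is $\asymp_d\#\{v\in\Lambda:F(v)<b\}$ for any fixed $b$ depending only on $d$, and Lemma~\ref{Dav2} — which, by the remark after it, applies to the lattice $\Lambda$ — together with $b\asymp_d1$ and the number $j'$ of minima below $b$ being $\le d$, identifies this as $\asymp_d(M_1\cdots M_{j'})^{-1}$. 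It then remains to compare $M_1\cdots M_j$ with $M_1\cdots M_{j'}$: the indices where they differ are those $i$ with $M_i$ between $\min(1,b)$ and $\max(1,b)$, hence with $M_i\asymp_d1$, and there are at most $d$ of them, so the products agree up to a factor depending only on $d$. Finally, $M_1\ll_d1$ excludes the degenerate branch $j=j'=0$ (all minima large), in which $\alpha_1(\Lambda)=M_1^{-1}$ could be small while the count stays $\ge1$; under this hypothesis one has $M_1\asymp_d1$ there and both sides are $\asymp_d1$. Assembling these comparisons yields \eqref{LLL2}.

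The substantive inputs, Minkowski's second theorem (with the volume comparison for the $F$-unit ball) and Lemma~\ref{Dav2}, are classical; I expect the only delicate point to be the bookkeeping in the second step — matching the counting index $j'$ coming from Lemma~\ref{Dav2} with the minimizing index $j$ of the partial products, checking that each slack factor is $\asymp_d1$, and correctly treating the boundary cases where $j$ or $j'$ vanishes, which is exactly where the hypothesis $M_1\ll_d1$ is indispensable.
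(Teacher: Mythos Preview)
Your argument is correct. For \eqref{LLL1} you take a slightly different route from the paper: the paper invokes Lemma~\ref{LLL} (the Lenstra--Lenstra--Lov\'asz basis result, which already packages the relation $\det(\Lambda')\asymp_l\prod N_j$), applying it to an arbitrary sublattice for the upper bound and to the sublattice generated by vectors realizing $M_1,\dots,M_l$ for the lower bound. You instead unbundle this into its classical ingredients, using Hadamard's inequality for the lower bound on $\alpha_l$ and Minkowski's second theorem in $\operatorname{span}\Lambda'$ for the upper bound. The two approaches are essentially equivalent in content, since Lemma~\ref{LLL} is itself a consequence of Minkowski's second theorem; your version has the advantage of being self-contained and making the $l$-versus-$d$ dependence of constants transparent, while the paper's is shorter once Lemma~\ref{LLL} is on the table.

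For \eqref{LLL2} you follow the same strategy as the paper (it simply says the relation is an easy consequence of \eqref{LLL1}, Lemma~\ref{Dav2} and Corollary~\ref{Dav5}), but you spell out the bookkeeping --- the comparison between the index $j$ minimizing $M_1\cdots M_l$ and the index $j'$ coming from the counting lemma, and the role of the hypothesis $M_1\ll_d1$ in handling the boundary case --- that the paper leaves implicit. Your treatment of this step is more detailed and is correct.
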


Lemma \ref{Dav4} is an easy consequence of the following lemma
formulated in proposition (page 517) and remark (page 518) in Lenstra, Lenstra and Lov\'asz (\citeyear{LenLenLov82}).

%le25 #&#
%
\begin{lemma} \label{LLL}
Let $M_1 \leq\cdots\leq M_d$ be the successive minima of the
standard Euclidean norm with respect to a lattice
$\Lambda\subset\R^d$. Then there exists a basis $e_1, e_2,\ldots,
e_d$ of $\Lambda$ such that
%
%e7.9 #&#
%
\begin{equation}
M_l\asymp_d\Vert e_l\Vert,\qquad  l=1,\ldots,d.
\end{equation}
Moreover,
%
%e7.10 #&#
%
\begin{equation}
\det(\Lambda)\asymp_d\prod_{l=1}^d
\Vert e_l\Vert.
\end{equation}
\end{lemma}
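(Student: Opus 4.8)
The assertion is a standard packaging of reduction theory, and the plan is to derive it from two classical ingredients, both available in the references already cited: Minkowski's second theorem, in the form $M_1M_2\cdots M_k\asymp_k\det(\Lambda')$ for every rank-$k$ lattice $\Lambda'$ with Euclidean successive minima $M_1\le\cdots\le M_k$; and an explicit construction of a basis ``adapted to the flag of successive minima''. (One could instead simply invoke the existence of an LLL-reduced, or Minkowski-reduced, basis and its known comparison with the $M_l$; I sketch the hands-on version.)

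First I would fix linearly independent $v_1,\dots,v_d\in\Lambda$ with $\norm{v_l}=M_l$, set $L_l=\operatorname{span}_{\R}(v_1,\dots,v_l)$ and $\Lambda_l=\Lambda\cap L_l$, a lattice of rank $l$. Since $v_1,\dots,v_l\in\Lambda_l\subseteq\Lambda$, the successive minima of $\Lambda_l$ are exactly $M_1\le\cdots\le M_l$ (the upper bound is witnessed by the $v_j$, the lower bound by $\Lambda_l\subseteq\Lambda$). Then I would build a basis $e_1,\dots,e_d$ of $\Lambda$ by induction on $l$, so that $e_1,\dots,e_l$ is a basis of $\Lambda_l$: having $e_1,\dots,e_{l-1}$, choose $w\in\Lambda_l$ completing it to a basis of $\Lambda_l$; writing $v_l=\sum_{j<l}m_j e_j+k\,w$ with $m_j,k\in\Z$ and $k\ge1$ (after replacing $w$ by $-w$ if necessary, using $v_l\notin L_{l-1}$), and letting $\pi_l$ be orthogonal projection onto $L_{l-1}^{\perp}$, one gets $\norm{\pi_l(w)}=\norm{\pi_l(v_l)}/k\le\norm{v_l}=M_l$. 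Finally size-reduce: subtract from $w$ an integer combination of $e_1,\dots,e_{l-1}$ so that the resulting $e_l$ has $L_{l-1}$-component $\sum_{j<l}c_j e_j$ with $|c_j|\le\tfrac12$; since $\pi_l(e_l)=\pi_l(w)$, the triangle inequality with the inductive bound $\norm{e_j}\le(3/2)^{j-1}M_j\le(3/2)^{j-1}M_l$ gives $\norm{e_l}\le M_l+\tfrac12\sum_{j<l}(3/2)^{j-1}M_l=(3/2)^{l-1}M_l$, hence $\norm{e_l}\ll_d M_l$.

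For the matching lower bound I would use that $\pi_l(e_l)$ generates the rank-one lattice $\pi_l(\Lambda_l)$ (as $\pi_l(e_j)=0$ for $j<l$), so that $\norm{e_l}\ge\norm{\pi_l(e_l)}=\det(\Lambda_l)/\det(\Lambda_{l-1})$; applying Minkowski's second theorem to $\Lambda_l$ and to $\Lambda_{l-1}$, whose successive minima are $M_1,\dots,M_l$ and $M_1,\dots,M_{l-1}$, yields $\det(\Lambda_l)/\det(\Lambda_{l-1})\asymp_d M_l$, hence $\norm{e_l}\gg_d M_l$. Together with the previous step this proves $M_l\asymp_d\norm{e_l}$ for all $l$. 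The ``Moreover'' part then follows at once: $\prod_{l=1}^d\norm{e_l}\ge|\det(e_1,\dots,e_d)|=\det(\Lambda)$ by Hadamard's inequality, while $\prod_{l=1}^d\norm{e_l}\asymp_d\prod_{l=1}^dM_l\asymp_d\det(\Lambda)$ by what was just shown together with Minkowski's second theorem applied to $\Lambda$ itself.

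The main obstacle is the middle step: the vectors $v_1,\dots,v_d$ achieving the successive minima need not form a basis of $\Lambda$ once $d\ge5$, so one must repair them into a genuine basis while controlling norms by a factor depending only on $d$. The size-reduction along the flag $L_1\subset\cdots\subset L_d$ does exactly this, but the bookkeeping — that each projection loses at most a bounded factor and that the recursion $c_l=1+\tfrac12\sum_{j<l}c_j$ (with $c_1=1$, so $c_l=(3/2)^{l-1}$) stays $d$-bounded — is the delicate part; the rest reduces to Hadamard's inequality and Minkowski's second theorem.
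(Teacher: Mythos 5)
Your proof is correct, but it is not the route the paper takes: the paper gives no proof of this lemma at all, deducing it instead by direct citation of the Proposition (p.~517) and Remark (p.~518) of Lenstra, Lenstra and Lov\'asz (1982), where the stated properties are established for an LLL-reduced basis (with explicit constants of the form $2^{(d-1)/2}$ in place of your $(3/2)^{l-1}$). What you do instead is reprove the classical reduction-theoretic fact from scratch: you take vectors $v_1,\dots,v_d$ realizing the minima, form the flag $L_1\subset\cdots\subset L_d$ and the saturated sublattices $\Lambda_l=\Lambda\cap L_l$, extend bases step by step, and size-reduce along the flag; the upper bound $\norm{e_l}\le(3/2)^{l-1}M_l$ follows from the recursion you set up, and the lower bound $\norm{e_l}\gg_d M_l$ from the identity $\norm{\pi_l(e_l)}=\det(\Lambda_l)/\det(\Lambda_{l-1})$ together with Minkowski's second theorem applied to $\Lambda_l$ and $\Lambda_{l-1}$ (whose minima you correctly identify as $M_1,\dots,M_l$ and $M_1,\dots,M_{l-1}$); the determinant statement then follows from Minkowski's second theorem for $\Lambda$ itself. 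All steps check out, including the extension of a basis of the saturated sublattice $\Lambda_{l-1}$ to one of $\Lambda_l$ and the bound $\norm{\pi_l(w)}\le\norm{v_l}$. The trade-off is the usual one: the citation is shorter and gives a basis that is actually computable, while your argument is self-contained, needs only Minkowski's second theorem and elementary lattice geometry, and makes transparent exactly where the dimension-dependent constants come from --- which is all that is used in the proof of Lemma \ref{Dav4}.
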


%s8 #&#
\section{From number theory to probability}
\label{s7}

In Section \ref{s7}, we use number-theoretical results of Section \ref
{s6} to estimate integrals of the right-hand side of~\eqref{koren1}. Recall that we have assumed the conditions of
Lemmas \ref{L73} and \ref{L75}, $s=d$, $\mathbb D=\mathbb
C^{-1/2}$, $ \delta\leq1/(5 s)$, $n\ge c_4$ and \eqref{eq76},
for an orthonormal system $\mathcal S=\mathcal S_o$. The notation
$\operatorname{SL}(d, \R)$ is used below for the set of all $(d\times
d)$-matrices with real entries and determinant 1.

Introduce the matrices
%
%e8.1 #&#
%
\begin{eqnarray}
\label{svo4} \mathbb D_r &\= &\pmatrix{ r \mathbb I_{s} & \mathbb O_{s}
\cr
\mathbb O_{s} & r^{-1} \mathbb I_{s}
}\in \operatorname{SL}(2s,\R),\qquad  r>0,
\\
\label{svon} \mathbb K_t& \= &\pmatrix{ \mathbb I_{s} & -t \mathbb I_{s}
\cr
t \mathbb I_{s} & \mathbb I_{s}},\qquad  t\in\R,
\\
\label{svo5} \mathbb U_t &\=& \pmatrix{ \mathbb I_{s} & -t \mathbb I_{s}
\cr
\mathbb O_{s} & \mathbb I_{s}}\in \operatorname{SL}(2s,\R),\qquad t\in\R,
\end{eqnarray}
and the lattices
%
%e8.4 #&#
%
\begin{eqnarray}
\label{latt} \Lambda&\=&\pmatrix{ \mathbb
I_{s}&\mathbb O_{s}
\cr
\mathbb O_{s} &\mathbb V_0}\mathbb Z^{2s},
\\
\label{jj}\Lambda_{j}&=& \mathbb D_{j} \mathbb
U_{j^{-1}} \Lambda=\pmatrix{ j
\mathbb I_{s}&\mathbb-\mathbb V_0
\cr
\mathbb O_{s} &j^{-1} \mathbb V_0
}\mathbb Z^{2s},\qquad j=1,2,\ldots,
\end{eqnarray}
where
%
%e8.6 #&#
%
\begin{equation}
\label{svo9} \mathbb V_0=\sigma_1^{-2}
\mathbb V
\end{equation}
and
the matrix $\mathbb V$ is defined in \eqref{bbbl}.
Below we use the following simplest properties of
these matrices:
%
%e8.7 #&#
%
\begin{eqnarray}
\label{svo} \mathbb D_a\mathbb D_b=\mathbb
D_{ab},\qquad \mathbb U_a\mathbb U_b=\mathbb
U_{a+b} \quad\mbox{and} \quad\mathbb D_a \mathbb
U_b = \mathbb U_{a^2b} \mathbb D_a
\nonumber
\\[-8pt]
\\[-8pt]
\eqntext{\mbox{for $a,b>0$.}}
\end{eqnarray}

Let $\Vert x\Vert_\infty=\max_{1\le j\le d}|x_j|$, for $x=({x}_1,
\ldots, {x}_d)\in\mathbb R^d$. Let $M_{j,t}$, $j=1,2, \ldots,\break  2s$,
be the
successive minima of the norm $\Vert\cdt\Vert_\infty$ with respect to
the lattice
%
%e8.8 #&#
%
\begin{equation}
\label{latt8} \Xi_t\=\pmatrix{r \mathbb I_{s}&-rt \mathbb V
\cr
\mathbb O_{s} &r^{-1} \mathbb I_{s}
}\mathbb Z^{2s}.
\end{equation}
Moreover,
simultaneously, $M_{j,t}$ are the successive minima of the norm
$F^*(\cdt)$ defined for $(m,\ov m)\in\mathbb R^{2s}$, $m,\ov
m\in\mathbb R^{s}$, by
%
%e8.9 #&#
%
\begin{equation}
\label{latt5} F^*\bigl((m,\ov m)\bigr)\=\max\bigl\{ \Vert m
\Vert_\infty, \sigma_1^{2} \bigl\Vert\mathbb
V^{-1}\ov m\bigr\Vert_\infty\bigr\}
\end{equation}
with respect to the lattice
%
%e8.10 #&#
%
\begin{equation}
\label{latt6} \Omega_t\=\pmatrix{ r \mathbb I_{s}&-rt \mathbb V
\cr
\mathbb O_{s} &\sigma_1^{-2} r^{-1}
\mathbb V
}\mathbb Z^{2s} =\mathbb
D_r\mathbb U_u \Lambda\qquad\mbox{where }u\=
\sigma_1^{2} t.
\end{equation}
Using Lemmas
\ref{Dav9} and \ref{LLL} and the equality $\det(\Xi_t)=1$, it is easy
to show that
%
%e8.11 #&#
%
\begin{equation}
\label{tyy1} M_{1,t}\ll_s 1.
\end{equation}

Let $M_{j,t}^*$ be the successive minima of the Euclidean
norm
with respect to the lattice $\Omega_t$. Note that, according to
\eqref{ett8} and \eqref{latt5},
%
%e8.12 #&#
%
\begin{equation}
\label{att}\Vert\cdt\Vert\ll_{s} F^*(\cdt).
\end{equation}
Using \eqref{att} and Lemma \ref{Dav9},
we obtain
%
%e8.13 #&#
%
\begin{equation}
\label{att88} M_{j, t}^*\ll_s M_{j, t},\qquad  j=1,
\ldots,2s.
\end{equation}
According to Lemma \ref{Dav4},
%
%e8.14 #&#
%
\begin{equation}
\label{att89}\alpha(\Xi_t) \ll_s\alpha(
\Omega_t).
\end{equation}

Let us estimate $\alpha(\Omega_t)$ assuming that $r\ge1$ and (for
a moment) $t=\sigma_1^{-2} r^{-1}$. In this case
%
%e8.15 #&#
%
\begin{equation}
\label{latt36} \Omega_t=\pmatrix{ r \mathbb I_{s}&-\mathbb V_0
\cr
\mathbb O_{s} &r^{-1} \mathbb V_0}\mathbb Z^{2s}.
\end{equation}
By relation \eqref{LLL2} of Lemma \ref{Dav4}, we have
%
%e8.16 #&#
%
\begin{equation}
\alpha(\Omega_t) \asymp_s\#\bigl \{v \in
\Omega_t \dvtx\Vert v\Vert< 1/2 \bigr\}=\#K, \label{att889}
\end{equation}
where
%
%e8.17 #&#
%
\begin{eqnarray}\label{a889}
&&K= \bigl\{v=(m, \ov m)\in\mathbb Z^{2s}\dvtx m, \ov m\in\mathbb
Z^{s},
\nonumber
\\[-8pt]
\\[-8pt]
\nonumber
&&\hspace*{28pt}\Vert rm-\mathbb V_0\ov m\Vert^2+
\bigl\Vert r^{-1}\mathbb V_0\ov m\bigr\Vert^2< 1/4
\bigr\}.
\end{eqnarray}
Let us estimate from above the
right-hand side of \eqref{att889}. If $v=(m, \ov m)\in K$, then
%
%e8.18 #&#
%
\begin{equation}
r \Vert m\Vert\le\Vert rm-\mathbb V_0\ov m\Vert+\Vert\mathbb
V_0\ov m\Vert< \frac{ 1 }{ 2 } + \frac{ r }{ 2 } \le r.
\label{ad889}
\end{equation}
Hence
$m=0$ and $\Vert\mathbb V_0\ov m\Vert\le1/2$. It remains to
estimate the quantity
%
%e8.19 #&#
%
\begin{equation}
R\=\# \bigl\{ \ov m \in\mathbb Z^{s}\dvtx\Vert\mathbb V_0
\ov m\Vert< 1 \bigr\}\ge\# K. \label{ada889}
\end{equation}
Let $N_1\le\cdots\le N_{s}$ be the successive minima of the
Euclidean norm
with respect to the lattice $\mathbb V_0\mathbb Z^{s}$.
Let $e_1,e_2,\ldots, e_{s}$ be the standard orthonormal basis of~$\mathbb Z^{s}$.
By~\eqref{ett8} and \eqref{svo9}, we have
$\Vert\mathbb V_0e_j\Vert\le1$, $j=1,2, \ldots, s$. Therefore,
using Lemma~\ref{Dav9}, we see that $N_1\le\cdots\le N_{s}\le1$.
By \eqref{ett7}, \eqref{svo9}, \eqref{ada889} and by Lemmas
\ref{Dav2}, \ref{Dav9} and \ref{LLL},
%
%e8.20 #&#
%
\begin{equation}
\label{f99} R\asymp_s (N_1\cdt N_2 \cdots
N_s)^{-1}\asymp_s(\det\mathbb
V_0)^{-1}\asymp_s\sigma_1^{2s}
(\det\mathbb C)^{-1}.
\end{equation}
Hence, using \eqref{att889}, \eqref{ada889}
and \eqref{f99}, we conclude that
%
%e8.21 #&#
%
\begin{equation}
\label{adas89} \alpha(\Omega_t) \ll_s
\sigma_1^{2s} (\det\mathbb C)^{-1}\qquad \mbox{for }
r\ge1 \mbox{ and } t=\sigma_1^{-2} r^{-1}.
\end{equation}

Let now $t\in\mathbb R$ be arbitrary. By \eqref{latt8},
\eqref{tyy1}, \eqref{att89} and by
Lemmas \ref{Dav2}, \ref{Dav5} and~\ref{Dav4},
%
%e8.22 #&#
%
\begin{eqnarray}\label{kore}
\label{ken4}&&\sum_{m,\ov m \in\mathbb{Z}^{s}} \exp\bigl\{-r^2
\bigl\Vert m-t \mathbb V \ov m\bigr\Vert^2 - \Vert\ov m
\Vert^2/r^2 \bigr\}=\sum_{v \in\Xi_t}
\exp\bigl\{- \Vert v\Vert^2 \bigr\}
\nonumber
\\
&&\qquad\ll_sR_t\=\# \bigl\{v \in\Xi_t \dvtx\Vert v
\Vert< 1 \bigr\}
\\
&&\qquad \ll_s \alpha(\Xi_t) \ll_s\alpha(
\Omega_t).\nonumber
\end{eqnarray}
Now, by \eqref{koren}, \eqref{latt6} and
\eqref{ken4}, we have
%
%e8.23 #&#
%
\begin{eqnarray}
\label{koren4}\qquad \E\operatorname{e} \bigl\{ \bigl\langle\mathbb B_t
\zeta_n,\zeta'_n \bigr\rangle\bigr\}
\ll_s r^{-s} \alpha(\Omega_t)
=r^{-s} \alpha(\mathbb D_r\mathbb U_u
\Lambda) \qquad\mbox{where }u=\sigma_1^{2} t.
\end{eqnarray}

Let us estimate the quantity $R_t$, $t\in\mathbb R$, defined in
\eqref{kore} assuming that $r\ge1$ and $\llvert rt\rrvert\le
c_s^* \sigma_1^{-2}$, where $c_s^*\ge1$ is an arbitrary quantity
depending on $s$ only. By Lemma \ref{Dav5}, we have
%
%e8.24 #&#
%
\begin{equation}
\label{ad888}R_t\asymp_s\# K_0,
\end{equation}
where
%
%e8.25 #&#
%
\begin{eqnarray}\label{ba889}
&&K_0\= \bigl\{v=(m, \ov m)\in\mathbb Z^{2s}\dvtx m, \ov m\in
\mathbb Z^{s},
\nonumber
\\[-8pt]
\\[-8pt]
\nonumber
&&\hspace*{33pt}\Vert rm-rt \mathbb V\ov m\Vert^2+
\bigl\Vert r^{-1}\ov m\bigr\Vert^2< \bigl(2c_s^*
\bigr)^{-2} \bigr\}.
\end{eqnarray}
If $v=(m, \ov m)\in
K_0$, $r\ge1$ and $\llvert rt\rrvert\le c_s^* \sigma_1^{-2}$,
then, by \eqref{ett8} and \eqref{ba889},
%
%e8.26 #&#
%
\begin{equation}
r \Vert m\Vert\le\Vert rm-rt \mathbb V\ov m\Vert+\llvert rt\rrvert
\Vert
\mathbb V\ov m\Vert< \frac{ 1
}{ 2 } + \frac{ r }{ 2 } \le r.
\label{polk}
\end{equation}
Hence
$m=0$ and $\llvert rt\rrvert\Vert\mathbb V\ov m\Vert\le(2
c_s^*)^{-1}<1$. It remains to estimate the quantity
%
%e8.27 #&#
%
\begin{equation}
S\=\# \bigl\{ \ov m \in\mathbb Z^{s}\dvtx\llvert rt\rrvert\Vert
\mathbb V\ov m\Vert< 1 \bigr\}\ge\# K_0. \label{fdfd}
\end{equation}
Let
$P_1\le\cdots\le P_{s}$ be the successive minima of the Euclidean
norm
with respect to the lattice $\llvert rt\rrvert\mathbb V\mathbb Z^{s}$.
Let $e_1,e_2,\ldots, e_{s}$ be the standard orthonormal basis of
$\mathbb Z^{s}$. By~\eqref{ett8}, we have
$\Vert\llvert rt\rrvert\mathbb Ve_j\Vert\ll_s 1$, $j=1,2,
\ldots,
s$. Therefore, using Lemma \ref{Dav9}, we see that
$P_1\le\cdots\le P_{s}\ll_s 1$. By \eqref{ett7}, \eqref{fdfd}
and Lemmas \ref{Dav2} and \ref{LLL},
%
%e8.28 #&#
%
\begin{equation}
\label{yuyu} S\asymp_s (P_1\cdt P_2 \cdots
P_{s})^{-1}\asymp_s\bigl(\det\bigl(\llvert rt
\rrvert\mathbb V\bigr)\bigr)^{-1}\asymp_s\llvert rt
\rrvert^{-s} (\det\mathbb C)^{-1}.
\end{equation}
Hence, using \eqref{ad888}, \eqref{fdfd} and \eqref{yuyu},
we conclude that
%
%e8.29 #&#
%
\begin{equation}
\label{ghgh} R_t \ll_s\llvert rt\rrvert
^{-s} (\det\mathbb C)^{-1}\qquad\mbox{for } r\ge1 \mbox{ and }
\llvert rt\rrvert\le c_s^* \sigma_1^{-2}.
\end{equation}

Now, by \eqref{koren},
\eqref{ken4} and
\eqref{ghgh}, we have
%
%e8.30 #&#
%
\begin{eqnarray}
\label{koren44} \E\operatorname{e} \bigl\{ \bigl\langle\mathbb B_t
\zeta_n,\zeta'_n \bigr\rangle\bigr\} &
\ll_s& r^{-s} R_t
\nonumber\hspace*{-35pt}
\\[-8pt]
\\[-8pt]
\nonumber
&\ll_s& r^{-2s} \llvert t\rrvert^{-s} (\det
\mathbb C)^{-1}\qquad \mbox{for } r\ge1 \mbox{ and }\llvert rt\rrvert\le
c_s^* \sigma_1^{-2}.\hspace*{-35pt}
\end{eqnarray}
It is easy to
verify that
%
%e8.31 #&#
%
\begin{equation}
\int_{c_s\sigma_1^{-2}r^{-2+4/s}
}^{\sigma_1^{-2} r^{-1}}\sqrt{r^{-2s} \llvert t
\rrvert^{-s} (\det\mathbb C)^{-1}} \frac{ dt }{ t }
\ll_s r^{-2} \sigma_1^{s} (\det
\mathbb C)^{-1/2} \label{eq71av}
\end{equation}
for any $c_s$ depending on
$s$ only. Note that $ \sigma_1^{s} (\det\mathbb C)^{-
1/2}\ge1$. Using \eqref{koren4}, \eqref{eq71av} and Lemmas
\ref{L73} and \ref{L75},
we derive the following lemma.

%le26 #&#
%
\begin{lemma}\label{GZ} Let the conditions of Lemma \textup{\ref{L73}}
be satisfied with $s=d$, $\mathbb D=\mathbb C^{-1/2}$, $\delta\leq
1/(5 s)$ and with an orthonormal system $\mathcal S=\mathcal
S_o=\{ {e}_{1},\ldots, {e}_{s} \}\subset\mathbb R^d$. Let $c_s$ be an
arbitrary quantity
depending on $s$ only. Then, for any $b\in\mathbb R^d$ and $r\ge1$,
%
%e8.32 #&#
%
\begin{eqnarray}\label{eq71a}
&&\int_{c_s\sigma_1^{-2}r^{-2+4/s} }^{\sigma
_1^{-2}} \bigl| \widehat\Psi_b
(t/2) \bigr| \frac{ dt }{ t }
\nonumber\hspace*{-35pt}
\\[-8pt]
\\[-8pt]
\nonumber
&&\qquad \ll_s (p N)^{-1}
\sigma_1^{s} (\det\mathbb C)^{-1/2}+r^{-
s/2}
\sup_\Gamma\int_{r^{-1} }^1\bigl(
\alpha(\mathbb D_r\mathbb U_u \Lambda)
\bigr)^{1/2} \frac{ du }{ u },\hspace*{-35pt}
\end{eqnarray}
where $r$, $\alpha(\cdt)$, $\mathbb D_r$
$\mathbb U_t$ and the lattice $\Lambda$ are defined in relations
\eqref{dfn}, \eqref{defr}, \eqref{bbbl}, \eqref{alp}, \eqref
{alp3}, \eqref{svo4}, \eqref{svo5} and \eqref{latt} and in Lemma
\ref{L75}.
The $\sup_\Gamma$ means here the supremum over all
possible values of $z_j,z_j'\in\mathbb R^d $ (involved in the
definition of matrices $\mathbb B_t$ and $\mathbb V$) such that
%
%e8.33 #&#
%
\begin{equation}
\bigl\|\mathbb C^{-1/2}z_j-e_j\bigr\|\leq\delta,\qquad\bigl \|
\mathbb C^{-1/2}z_j'-e_j\bigr\|\leq
\delta\qquad\mbox{for } 1\leq j\leq s.\label{eq76f}
\end{equation}
Moreover, for any $b\in\mathbb R^d$, $r\ge1$ and
$\g>0$ and any fixed $t\in\mathbb R$ satisfying
$\llvert rt\rrvert\le c_s^* \sigma_1^{-2}$, where $c_s^*\ge1$ is
an arbitrary quantity depending on $s$ only, we have
%
%e8.34 #&#
%
\begin{equation}
\bigl| \widehat\Psi_b (t) \bigr| \ll_{\g, s} (p
N)^{-\g}+ r^{-s} \llvert t\rrvert^{-s/2} (\det\mathbb
C)^{-1/2}.\label{equ71p}
\end{equation}
\end{lemma}

Let $v=(m,\ov m)\in\mathbb R^{2s}$, $m,\ov m\in\mathbb R^{s}$ and
$t\in\mathbb R$. Then
%
%e8.35 #&#
%
\begin{equation}
\label{rho}\ov m+t m=\bigl(1 +t^2\bigr) \ov m+t (m-t \ov m).
\end{equation}
Equality \eqref{rho} implies that
%
%e8.36 #&#
%
\begin{equation}
\label{rho1} \Vert\ov m+t m\Vert\ll_s\Vert\ov m\Vert+\Vert m-t
\ov m\Vert\qquad\mbox{for } |t|\ll_s 1.
\end{equation}
Hence,
%
%e8.37 #&#
%
\begin{eqnarray}
\label{rho3} r \Vert m-t \ov m\Vert+ r^{-1} \Vert\ov m+t m\Vert
\ll_s r \Vert m-t \ov m\Vert+ r^{-1}\Vert\ov m\Vert
\nonumber
\\[-8pt]
\\[-8pt]
\eqntext{\mbox{for } r\gg1, |t|\ll_s 1.}
\end{eqnarray}
According to \eqref{svo4}--\eqref{svo5}, we
have
%
%e8.38 #&#
%
\begin{eqnarray}
\label{rhom} \mathbb D_r\mathbb U_tv&=&\bigl(r(m-t
\ov m), r^{-1}\ov m\bigr)\quad \mbox{and}
\nonumber
\\[-8pt]
\\[-8pt]
\nonumber
 \mathbb D_r
\mathbb K_tv&=&\bigl(r(m-t \ov m), r^{-1}(\ov m+tm )
\bigr).
\end{eqnarray}
It is clear that the operators
$\mathbb D_r\mathbb U_t$ and $\mathbb D_r\mathbb K_t$ are
invertible. Therefore, using \eqref{rho3} and \eqref{rhom} and
applying Lemmas \ref{Dav9} and \ref{Dav4}, we derive the
inequality
%
%e8.39 #&#
%
\begin{equation}
\label{kt6} \alpha(\mathbb D_{r}\mathbb U_t \Omega)
\ll_s\alpha(\mathbb D_{r}\mathbb K_t
\Omega) \qquad\mbox{for } r\gg1, |t|\ll_s 1,
\end{equation}
which is valid for any
lattice $\Omega\subset\mathbb R^{2s}$.

Let $\mathbb T$ be the permutation $(2s\times2s)$-matrix which
permutes the rows of a ${(2s\times2s)}$-matrix $\mathbb A$ so
that the new order (corresponding to the matrix $\mathbb T\mathbb
A$) is
\[
1,s+1,2,s+2,\ldots, s, 2s.
\]
Note that the operator
$\mathbb T$ is isometric and $\mathbb A\na\mathbb A \mathbb T^{-1}$
rearranges the columns of $\mathbb A$ in the order mentioned
above. It is easy to see that
%
%e8.40 #&#
%
\begin{equation}
\label{kt7} \alpha_j(\mathbb T \Omega) =\alpha_j(
\Omega), \qquad j=1,\ldots2s  \mbox{ and } \alpha(\mathbb T \Omega) =\alpha(
\Omega)
\end{equation}
for any lattice
$\Omega\subset\mathbb R^{2s}$.

Note now that
%
%e8.41 #&#
%
\begin{equation}
\label{kt8}\mathbb T\mathbb D_{r}\mathbb K_t
\Lambda_j =\mathbb T\mathbb D_{r}\mathbb K_t
\mathbb T^{-1}\mathbb T\Lambda_j =\mathbb
W_t\Delta_j,
\end{equation}
where
$\Delta_j$ is a lattice defined by
%
%e8.42 #&#
%
\begin{equation}
\label{kt9}\Delta_j=\mathbb T\Lambda_j
\end{equation}
and where $\mathbb W_t$ is $(2s\times2s)$-matrix
%
%e8.43 #&#
%
\begin{equation}
\label{latt34} \mathbb W_t= \pmatrix{\mathbb G_{r,t}&\mathbb O_2&\vdots&\mathbb
O_2
\vspace*{2pt}\cr
\mathbb O_2&\mathbb G_{r,t}&\vdots&\mathbb O_2
\vspace*{2pt}\cr
\cdots&\cdots&\cdots&\cdots
\vspace*{2pt}\cr
\mathbb O_2& \mathbb O_2 &\vdots& \mathbb G_{r,t}}
\end{equation}
constructed of $(2\times2)$-matrices $\mathbb O_2$ (with zero
entries) and
%
%e8.44 #&#
%
\begin{equation}
\mathbb G_{r,t} \= \pmatrix{ r &
-rt
\vspace*{2pt}\cr
r^{-1} t & r^{-1}}.
\end{equation}

Let $|t|\le2$ and
%
%e8.45 #&#
%
\begin{equation}
\label{kth} \theta=\arcsin\bigl(t \bigl(1 +t^2\bigr)^{-1/2}
\bigr)\qquad \mbox{or, equivalently } t=\tan\theta.
\end{equation}
Then we have
%
%e8.46 #&#
%
\begin{eqnarray}
\label{latw} |\theta|\le c^*&\=&\arcsin(2/\sqrt5), \qquad\cos\theta=\bigl(1
+t^2\bigr)^{-1/2},
\nonumber
\\[-8pt]
\\[-8pt]
\nonumber
\sin\theta&=&t \bigl(1 +t^2
\bigr)^{-1/2}.
\end{eqnarray}
It is easy to see that
%
%e8.47 #&#
%
\begin{eqnarray}
\label{latt1}\mathbb G_{r,t} = \bigl(1 +t^2
\bigr)^{1/2} \ov{\mathbb D}_{r} \ov{\mathbb
K}_\theta
\end{eqnarray}
and
%
%e8.48 #&#
%
\begin{equation}
\label{oot}\mathbb W_t=\bigl(1 +t^2
\bigr)^{1/2} \widetilde{\mathbb D}_r \widetilde{\mathbb
K}_\theta,
\end{equation}
where
%
%e8.49 #&#
%
\begin{equation}
\label{latt345}\qquad \widetilde{\mathbb D}_r=\pmatrix{\ov{\mathbb D}_{r}&\mathbb O_2&\vdots&
\mathbb O_2
\vspace*{2pt}\cr
\mathbb O_2&\ov{\mathbb D}_{r}&\vdots&\mathbb
O_2
\vspace*{2pt}\cr
\cdots&\cdots&\cdots&\cdots
\vspace*{2pt}\cr
\mathbb O_2& \mathbb O_2 &\vdots&\ov{\mathbb
D}_{r}}
 \quad\mbox{and}\quad \widetilde{
\mathbb K}_\theta= \pmatrix{\ov{\mathbb
K}_\theta&\mathbb O_2&\vdots&\mathbb O_2
\vspace*{2pt}\cr
\mathbb O_2&\ov{\mathbb K}_\theta&\vdots&\mathbb
O_2
\vspace*{2pt}\cr
\cdots&\cdots&\cdots&\cdots
\vspace*{2pt}\cr
\mathbb O_2& \mathbb O_2 &\vdots&\ov{\mathbb
K}_\theta}
\end{equation}
are $(2s\times2s)$-matrices with
%
%e8.50 #&#
%
\begin{equation}
\label{laz}\quad \ov{\mathbb D}_{r} \= \pmatrix{ r & 0
\vspace*{2pt}\cr
0 & r^{-1}}\quad \mbox{and}\quad \ov{
\mathbb K}_\theta\=\pmatrix{ \cos
\theta& -\sin\theta
\vspace*{2pt}\cr
\sin\theta&  \cos\theta
}\in\operatorname{SL}(2,\mathbb{R}).
\end{equation}
%
%z
Substituting \eqref{oot} into equality \eqref{kt8}, we obtain
%
%e8.51 #&#
%
\begin{equation}
\label{kot8}\mathbb T\mathbb D_{r}\mathbb K_t\Lambda
_j =\bigl(1 +t^2\bigr)^{1/2} \widetilde{\mathbb
D}_r \widetilde{\mathbb K}_\theta\Delta_j.
\end{equation}

Below we also use the following crucial lemma of G\"otze and
Margulis (\citeyear{GotMar10}).

%le27 #&#
%
\begin{lemma}\label{GM} Let $\widetilde{\mathbb K}_\theta$ and
%
%e8.52 #&#
%
\begin{equation}
\label{laty} \widetilde{\mathbb H}= \pmatrix{
\ov{\mathbb H}&\mathbb O_2&\vdots&\mathbb O_2
\vspace*{2pt}\cr
\mathbb O_2&\ov{\mathbb H}&\vdots&\mathbb O_2
\vspace*{2pt}\cr
\cdots&\cdots&\cdots&\cdots
\vspace*{2pt}\cr
\mathbb O_2& \mathbb O_2 &\vdots&\ov{\mathbb H}}
\end{equation}
be $(2d\times2d)$-matrices such that $\ov{\mathbb H}\in
\mathcal G=\operatorname{SL}(2,\mathbb R)$ and $\widetilde{\mathbb
K}_\theta$ is defined in {\eqref{latt345}} and~{\eqref{laz}}. Let $\beta$ be a positive number such that
$\beta d>2$. Then, for any $\ov{\mathbb H}\in\mathcal G$ and
any lattice $\Delta\subset\mathbb R^{2d}$,
%
%e8.53 #&#
%
\begin{equation}
\int_{0}^{2\pi} \bigl(\alpha(\widetilde{\mathbb H}
\widetilde{\mathbb K}_\theta\Delta) \bigr)^\beta\,d\theta
\ll_{\beta,d} \bigl(\alpha(\Delta) \bigr)^\beta\Vert\ov
{\mathbb H}
\Vert^{\beta
d-2}.
\end{equation}
Here $\Vert\ov{\mathbb H}\Vert$ is the standard norm of
the linear operator $\ov{\mathbb H}\dvtx\R^2\to\R^2$.
\end{lemma}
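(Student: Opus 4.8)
The plan is to reduce Lemma~\ref{GM}, by way of the geometry of numbers of Section~\ref{s6}, to an averaged oscillatory estimate for the irreducible constituents of the circle action; the factor $\norm{\ov{\mathbb H}}^{-2}$ by which the assertion improves on the trivial pointwise estimate $\alpha(\wt{\mathbb H}\wt{\mathbb K}_\theta\Delta)\ll_d\norm{\ov{\mathbb H}}^{d}\,\alpha(\Delta)$ is the gain produced by integrating over $\theta$.

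First I would pass to the $\alpha_l$. By definition $\alpha(\mathcal L)=\max_{1\le l\le2d}\alpha_l(\mathcal L)$, and a saturated rank-$l$ sublattice $\Lambda'\subset\mathcal L$ corresponds to a primitive decomposable $l$-vector $\omega$ in the exterior power $\bigwedge^{l}\mathcal L\subset\bigwedge^{l}(\R^{2d})$, with $d(\Lambda')=\norm\omega$. Hence, writing $\mathcal L_\theta=\wt{\mathbb H}\,\wt{\mathbb K}_\theta\,\Delta$ and $\sigma=\norm{\ov{\mathbb H}}\ge1$, it suffices, for each $l$, to estimate $\int_0^{2\pi}\alpha_l(\mathcal L_\theta)^\beta\,d\theta$, where $\alpha_l(\mathcal L_\theta)=\sup_\omega\norm{\bigwedge^{l}(\wt{\mathbb H}\,\wt{\mathbb K}_\theta)\,\omega}^{-1}$, the supremum running over the primitive decomposable $l$-vectors $\omega$ coming from the sublattices of $\Delta$; the case $l=2d$ is trivial, since $\alpha_{2d}(\mathcal L_\theta)=(\det\Delta)^{-1}$ is constant in $\theta$ and bounded by $\alpha(\Delta)\,\sigma^{\beta d-2}$ (as $\beta d\ge2$). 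Next, identify $\R^{2d}=\R^2\otimes\R^d$ so that $\wt{\mathbb H}\,\wt{\mathbb K}_\theta=(\ov{\mathbb H}\,\ov{\mathbb K}_\theta)\otimes\mathbb I_d$: by the Cauchy (plethysm) decomposition, $\bigwedge^{l}(\R^2\otimes\R^d)$ splits, as a representation of the diagonally embedded $\hbox{\rm SL}(2,\R)$, into a direct sum of copies of the symmetric powers $\operatorname{Sym}^{j}(\R^2)$ with $0\le j\le\min(l,2d-l)$, $j\equiv l\pmod2$; and on the $\operatorname{Sym}^{j}$-component the operator $\ov{\mathbb H}\,\ov{\mathbb K}_\theta$ has singular values $\sigma^{j},\sigma^{j-2},\dots,\sigma^{-j}$ (the two rotation factors in the Cartan decomposition $\ov{\mathbb H}\,\ov{\mathbb K}_\theta=\ov{\mathbb K}_{\psi_0}\operatorname{diag}(\sigma,\sigma^{-1})\ov{\mathbb K}_{\theta+\psi_1}$ acting isometrically). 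The point is that $\min(l,2d-l)\le d$, so only the powers $j\le d$ occur — this is the origin of the exponent $\beta d-2$.

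Decomposing each $\omega=\sum_j\omega^{(j)}$ into isotypic components and retaining the largest one (of norm $\gg_d\norm\omega=d(\Lambda')$), one reduces matters to the one-block inequality
$$
\int_0^{2\pi}\bgl\|\operatorname{Sym}^{j}(\ov{\mathbb H})\,\operatorname{Sym}^{j}(\ov{\mathbb K}_\phi)\,\psi\bgr\|^{-\beta}\,d\phi\ \ll_{\beta,d}\ \norm\psi^{-\beta}\,\sigma^{\beta d-2},\qquad 0\le j\le d,\ \ 0\ne\psi\in\operatorname{Sym}^{j}(\R^2).
$$
This I would prove by a direct computation: reducing $\ov{\mathbb H}$ to the diagonal factor $\operatorname{diag}(\sigma,\sigma^{-1})$ of its Cartan decomposition, the squared norm under the integral, along the orbit $\phi\mapsto\operatorname{Sym}^{j}(\ov{\mathbb K}_\phi)\psi$, has a ``well'' (occurring at one or two points of the circle) where it drops to its minimum $\asymp_j\sigma^{-2j}\norm\psi^2$ — located at the $\phi_0$ for which the orbit meets the lowest-weight (Veronese) direction — with profile $\asymp_j\sigma^{-2j}\norm\psi^2\max\{1,(\sigma^2|\phi-\phi_0|)^{2j}\}$ there; integrating $\norm{\cdot}^{-\beta}$ over the well and adding the bounded contribution from the rest of the circle gives $\asymp_{\beta,j}\norm\psi^{-\beta}\max\{\sigma^{\beta j-2},\sigma^{-\beta j}\}$, which is $\ll_{\beta,d}\norm\psi^{-\beta}\sigma^{\beta d-2}$ because $j\le d$, $\sigma\ge1$ and $\beta d>2$. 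The worst case $j=d$ reproduces the elementary planar estimate $\int_0^{2\pi}(\sigma^2\cos^2\phi+\sigma^{-2}\sin^2\phi)^{-\beta/2}\,d\phi\ll_\beta\sigma^{\beta-2}$.

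It remains to pass from a single sublattice to the supremum over the infinitely many sublattices of $\Delta$, and this is the main obstacle. A plain union bound fails: for $\beta=\tfrac12$ — the value used in Lemma~\ref{GZ}, and precisely the reason the paper needs $d\ge5$, i.e.\ $\beta d>2$ — the series $\sum_{\Lambda'}d(\Lambda')^{-\beta}$ diverges. Instead I would use that for each $\theta$ the supremum is, up to bounded $d$-dependent factors, attained at one LLL-reduced sublattice (Lemma~\ref{Dav4}), and that $\theta\mapsto\norm{\bigwedge^{l}(\wt{\mathbb H}\wt{\mathbb K}_\theta)\omega}^2$ is a trigonometric polynomial of degree $\le2d$ confined between $\sigma^{-2d}d(\Lambda')^2$ and $\sigma^{2d}d(\Lambda')^2$; a quantitative non-concentration (``$(C,\tfrac1{2d})$-good'') bound for such polynomials then limits the measure of the set of $\theta$ on which a given $\Lambda'$ can win the supremum by a quantity decreasing in $d(\Lambda')$, so that summing the one-block estimates over these $\theta$-localised competitions — the competing sublattices counted by Lemma~\ref{Dav2} and Corollary~\ref{Dav5}, and $M_1\cdots M_l$ reconverted to $\alpha_l$ by Lemma~\ref{Dav4} — converges exactly under the hypothesis $\beta d>2$ and produces the asserted bound $(\alpha(\Delta))^\beta\norm{\ov{\mathbb H}}^{\beta d-2}$. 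Making this $\theta$-localised union bound rigorous with the sharp exponent is where the $\hbox{\rm SL}(2,\R)$-structure — through the small degree $2d$ of the trigonometric polynomials above — rather than an arbitrary one-parameter subgroup is essential, and it is the technical core of the argument of G\"otze and Margulis (2010).
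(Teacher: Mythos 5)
The paper does not prove Lemma~\ref{GM}: it is quoted as a ``crucial lemma of G\"otze and Margulis (2010)'' and used as a black box, so there is no internal proof to compare yours with line by line. Your first half is nonetheless a correct reconstruction of how the cited proof begins: passing to the $\alpha_l$, encoding saturated sublattices by decomposable vectors in $\bigwedge^{l}(\R^{2d})$, using $\R^{2d}\cong\R^2\otimes\R^d$ and the Cauchy decomposition to see that only $\operatorname{Sym}^{j}(\R^2)$ with $j\le\min(l,2d-l)\le d$ occurs, and the one-block estimate with profile $\sigma^{-2j}\norm\psi^2\max\{1,(\sigma^2|\phi-\phi_0|)^{2j}\}$. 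This correctly locates the origin of both the exponent $\beta d-2$ and the hypothesis $\beta d>2$, and the computations there check out.

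The genuine gap is the last paragraph. The passage from one fixed sublattice to the supremum over all sublattices is the entire content of the lemma, and you do not prove it: you explicitly defer it to ``the technical core of the argument of G\"otze and Margulis (2010).'' Moreover, the mechanism you sketch is not the one that closes the argument. A $\theta$-localised union bound would require, for each sublattice $\Lambda'$, a bound on the measure of the set of $\theta$ on which $\Lambda'$ realises the supremum that decays in $d(\Lambda')$ fast enough to offset the divergence of $\sum_{\Lambda'}d(\Lambda')^{-\beta}$; the $(C,1/2d)$-good property of the trigonometric polynomials $\theta\mapsto\norm{\bigwedge^{l}(\wt{\mathbb H}\wt{\mathbb K}_\theta)\omega}^2$ yields Kleinbock--Margulis type non-divergence (tail bounds for the supremum), but no such summable measure estimate, and you supply none -- the claim that the sum ``converges exactly under $\beta d>2$'' is asserted, not derived. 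What G\"otze--Margulis (following Eskin--Margulis--Mozes) actually do is avoid the union bound entirely: by the submodularity inequality $d(\Lambda_1)\,d(\Lambda_2)\ge d(\Lambda_1\cap\Lambda_2)\,d(\Lambda_1+\Lambda_2)$, for each $\theta$ either a single sublattice essentially attains the supremum defining $\alpha_l$ (so the single-vector estimate applies directly), or $\alpha_l\le 2\max_{j\ge1}(\alpha_{l-j}\,\alpha_{l+j})^{1/2}$; this produces a coupled system of integral inequalities for the quantities $\int_0^{2\pi}\alpha_l^{\beta}\,d\theta$ which is then resolved inductively in $l$. Without that step your argument establishes only the single-sublattice inequality, not the lemma.
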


Consider, under the conditions of Lemma \ref{GZ},
%
%e8.54 #&#
%
\begin{equation}
\label{IJ56} I_0 \= \int_{c_s\sigma_1^{-2}r^{-2+4/s}/2
}^{\sigma_1^{-2}/2}\bigl |
\widehat\Psi_b (t) \bigr| \frac{ dt }{
t } = \int_{c_s\sigma_1^{-2}r^{-2+4/s} }^{\sigma_1^{-2}}
\bigl| \widehat\Psi_b (t/2)\bigr | \frac{ dt }{ t }.
\end{equation}
By Lemma \ref{GZ}, we have
%
%e8.55 #&#
%
\begin{equation}
\label{IJ5} I_0 \ll_s (p N)^{-1}
\sigma_1^{s} (\det\mathbb C)^{-1/2}+
r^{- s/2} \sup_\Gamma J,
\end{equation}
where
%
%e8.56 #&#
%
\begin{equation}
\label{IJ6} J= \int_{r^{-1}}^1 \bigl(\alpha(\mathbb
D_r\mathbb U_t \Lambda) \bigr)^{1/2}
\frac{ dt }{ t } \le\sum_{j=2}^{\rho}I_{j},
\end{equation}
with
%
%e8.57 #&#
%
\begin{equation}
\label{IJ7} I_j\= \int_{j^{-1}}^{(j-1)^{-1}}
\bigl(\alpha(\mathbb D_r\mathbb U_t \Lambda)
\bigr)^{1/2} \frac{ dt }{ t },\qquad j=2,3,\ldots,\rho\=\lfloor r
\rfloor+1.
\end{equation}

Changing variable $t=vj^{-2}$ and $v=w+j$ in $I_j$ and using the
properties of matrices $\mathbb D_r$ and $\mathbb U_t$, we have
%
%e8.58 #&#
%
\begin{eqnarray}\label{IJ}
I_j&=&\int_{j}^{j^{2}(j-1)^{-1}} \bigl(\alpha(
\mathbb D_r\mathbb U_{vj^{-2}} \Lambda)
\bigr)^{1/2} \frac{ dv }{ v }
\nonumber
\\
& \le& \int_{j}^{j+2} \bigl(\alpha(\mathbb
D_r\mathbb U_{vj^{-2}} \Lambda) \bigr)^{1/2}
\frac{ dv }{ v }
\\
& =& \int_{0}^{2} \bigl(\alpha(\mathbb
D_r\mathbb U_{wj^{-2}}\mathbb U_{j^{-1}} \Lambda)
\bigr)^{1/2} \frac{ dw }{ w+j }.\nonumber
\end{eqnarray}
By \eqref{svo},
%
%e8.59 #&#
%
\begin{equation}
\label{IJ2} \mathbb D_r\mathbb U_{wj^{-2}} =\mathbb
D_{rj^{-1}}\mathbb D_{j}\mathbb U_{wj^{-2}} = \mathbb
D_{rj^{-1}} \mathbb U_{w} \mathbb D_{j}.
\end{equation}

According to \eqref{IJ} and \eqref{IJ2},
%
%e8.60 #&#
%
\begin{equation}
\label{IJ22} I_j\ll\frac{ 1 }{ j } \int_{0}^{2}
\bigl(\alpha(\mathbb D_{rj^{-1}}\mathbb U_t
\Lambda_{j}) \bigr)^{1/2} \,{dt},
\end{equation}
where the lattices
$\Lambda_j$ are defined in \eqref{jj}; see also \eqref{svo4},
\eqref{svo5} and \eqref{latt}. Using~\eqref{jj}, \eqref{latt36}
and \eqref{adas89}, we see that
%
%e8.61 #&#
%
\begin{equation}
\label{kth14} \alpha(\Lambda_{j}) \ll_s
\sigma_1^{2s} (\det\mathbb C)^{-1}.
\end{equation}

By \eqref{kt6}, \eqref{kt7} and \eqref{kot8}, we have
%
%e8.62 #&#
%
\begin{eqnarray}
\label{qkt} \alpha(\mathbb D_{rj^{-1}}\mathbb U_t
\Lambda_{j}) &\ll_s&\alpha(\mathbb D_{rj^{-1}}
\mathbb K_t \Lambda_{j}) =\alpha(\mathbb T\mathbb
D_{rj^{-1}}\mathbb K_t \Lambda_{j})
\nonumber
\\[-8pt]
\\[-8pt]
\nonumber
&\ll_s&\alpha(\widetilde{\mathbb D}_{rj^{-1}} \widetilde{
\mathbb K}_\theta\Delta_j)
\end{eqnarray}
for $|t|\ll_s1$, $r\ge1$,
$j=2,3,\ldots,\rho$, where $\Delta_j$ and $\theta$ are defined in
\eqref{kt9} and~\eqref{kth}, respectively. Using \eqref{kth},
\eqref{latw}, \eqref{latt345}, \eqref{qkt} and Lemma \ref{GM}
(with $d=s$), we obtain
%
%e8.63 #&#
%
\begin{eqnarray}
\label{kth23}\int_{0}^{2} \bigl(\alpha(\mathbb
D_{rj^{-1}}\mathbb U_t \Lambda_{j})
\bigr)^{1/2} {\,dt}&\ll_s&\int_{0}^{c^*}
\bigl(\alpha(\widetilde{\mathbb D}_{rj^{-1}} \widetilde{\mathbb
K}_\theta\Delta_j) \bigr)^{1/2} \frac{ d\theta}{ \cos^2\theta}
\nonumber
\\
&\ll&\int_{0}^{2\pi} \bigl(\alpha(\widetilde{
\mathbb D}_{rj^{-1}} \widetilde{\mathbb K}_\theta
\Delta_j) \bigr)^{1/2} {\,d\theta}
\\
&\ll_s& \Vert\ov{\mathbb D}_{rj^{-1}}\Vert^{s/2-2}
\bigl(\alpha(\Delta_j) \bigr)^{1/2},\nonumber
\end{eqnarray}
if $s\ge5$. It is clear
that $\Vert\ov{\mathbb D}_{rj^{-1}}\Vert= rj^{-1}$. Therefore,
according to \eqref{kt7}, \eqref{kt9}, \eqref{IJ22} and
\eqref{kth23},
%
%e8.64 #&#
%
\begin{equation}
\label{kth13}I_j \ll_s \frac{ 1 }{ j }
\bigl(rj^{-1}\bigr)^{s/2-2} \bigl(\alpha(\Lambda_{j})
\bigr)^{1/2}.
\end{equation}

By \eqref{IJ6}, \eqref{kth14} and \eqref{kth13}, we obtain,
for $s\ge5$,
%
%e8.65 #&#
%
\begin{equation}
\label{kth15} J \ll_s\sigma_1^{s} (\det
\mathbb C)^{-1/2}\sum_{j=2}^{\rho}
\frac
{ 1 }{ j } \bigl(rj^{-1}\bigr)^{s/2-2}
\ll_s r^{s/2-2} \sigma_1^{s} (\det
\mathbb C)^{-1/2}.
\end{equation}
By \eqref{dfn}, \eqref{defr}, \eqref{IJ5} and \eqref{kth15}, we
have $r\asymp_s (Np)^{1/2}$ and
%
%e8.66 #&#
%
\begin{eqnarray}
\label{kth16} I_0 \ll_s r^{-2}
\sigma_1^{s} (\det\mathbb C)^{-1/2}
\ll_s (Np)^{-1} \sigma_1^{s} (\det
\mathbb C)^{-1/2}.
\end{eqnarray}
It is clear that in a similar way we can establish that
%
%e8.67 #&#
%
\begin{eqnarray}
\label{k16} \int_{\sigma_1^{-2} }^{c(s)\sigma_1^{-2}} \bigl| \widehat
\Psi_b (t/2) \bigr| \frac{ dt }{ t } \ll_s r^{-2}
\sigma_1^{s} (\det\mathbb C)^{-1/2}
\ll_s (Np)^{-
1} \sigma_1^{s} (
\det\mathbb C)^{-1/2}\hspace*{-35pt}
\end{eqnarray}
for any quantity $c(s)$ depending on $s$ only. The proof
will be easier due to the fact that $t$ cannot be small in this
integral.

Thus, we have proved the following lemma.

%le28 #&#
%
\begin{lemma}\label{GZ2} Let the conditions of Lemma {\ref{L73}}
be satisfied with $s=d\ge5$, $\mathbb D=\mathbb C^{-1/2}$, $\delta
\leq
1/(5 s)$ and with an orthonormal system $\mathcal S=\mathcal
S_o=\{ {e}_{1},\ldots, {e}_{s} \}\subset\mathbb R^d$. Let $c_1(s)$
and $c_2(s)$ be some
quantities depending on $s$ only. Then there exists a $c_s$ such
that
%
%e8.68 #&#
%
\begin{equation}
\int_{c_1(s)\sigma_1^{- 2}r^{-2+4/s}
}^{c_2(s)\sigma_1^{-2}} \bigl| \widehat\Psi_b (t)
\bigr| \frac{
dt }{ t } \ll_s (Np)^{-1}
\sigma_1^{s} (\det\mathbb C)^{-1/2}, \label{eq71u}
\end{equation}
if $Np\gg_s c_s$, where $r$ is defined in \eqref{dfn} and
\eqref{defr}.
\end{lemma}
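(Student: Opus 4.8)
The plan is to establish Lemma~\ref{GZ2} by assembling the pieces that Sections~\ref{s5}--\ref{s7} have already built, with the main task being a careful bookkeeping of the dyadic decomposition of the $t$-integral and the repeated application of the equidistribution estimate in Lemma~\ref{GM}. First I would recall that, under the stated hypotheses, Lemma~\ref{L7.3} combined with Lemma~\ref{L7.5} reduces $\bgl|\wh\Psi_b(t/2)\bgr|$ (up to an additive error $c_\g(s)(p\4N)^{-\g}$, negligible once $N\4p\gg_s c_s$) to the theta-series $r^{-s}\4\sup_\Gamma\E\operatorname{e}\bgl\{\langle\mathbb B_t\4\zeta_n,\zeta_n'\rangle\bgr\}$, and that the chain \eqref{koren}--\eqref{koren1} rewrites this as $r^{-s}\4\alpha(\mathbb D_r\4\mathbb U_u\,\Lambda)$ with $u=\sigma_1^2\4t$. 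The role of the hypothesis $N\4p\gg_s c_s$ is exactly to guarantee $n\ge c_4$ and $r=\sqrt{2\pi^2 n}\gg_s 1$, so all the lattice estimates in Section~\ref{s7} apply, and to make $r\asymp_s(N\4p)^{1/2}$.

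Next I would split the integral over $c_1(s)\sigma_1^{-2}r^{-2+4/s}\le t\le c_2(s)\sigma_1^{-2}$ into the two ranges handled separately in the excerpt: the range $t\le\sigma_1^{-2}$, which is $I_0$ of \eqref{IJ56}, and the ``large $t$'' range $\sigma_1^{-2}\le t\le c_2(s)\sigma_1^{-2}$, handled by \eqref{k16}. For the first range, I would reproduce the dyadic decomposition $J=\int_{r^{-1}}^1(\alpha(\mathbb D_r\mathbb U_t\Lambda))^{1/2}\,dt/t\le\sum_{j=2}^{\rho}I_j$, $\rho=\lceil r\rceil+1$, then, for each $j$, perform the substitution $t=vj^{-2}$, $v=w+j$ and use the semigroup identities \eqref{svo} to factor $\mathbb D_r\mathbb U_{wj^{-2}}=\mathbb D_{rj^{-1}}\mathbb U_w\mathbb D_j$ and recognize the lattice $\Lambda_j=\mathbb D_j\mathbb U_{j^{-1}}\Lambda$ of \eqref{jj}, reducing $I_j$ to $\ll j^{-1}\int_0^2(\alpha(\mathbb D_{rj^{-1}}\mathbb U_t\Lambda_j))^{1/2}\,dt$. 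Then I would pass from $\mathbb U_t$ to $\mathbb K_t$ via \eqref{kt6}, apply the permutation $\mathbb T$ to block-diagonalize (\eqref{kt7}, \eqref{kot8}), rewrite $\mathbb W_t=(1+t^2)^{1/2}\wt{\mathbb D}_r\wt{\mathbb K}_\theta$ with $t=\tan\theta$, and invoke Lemma~\ref{GM} with $\beta=1/2$ and $d=s\ge5$ (so $\beta\4s=s/2>2$) to get $\int_0^2(\alpha(\mathbb D_{rj^{-1}}\mathbb U_t\Lambda_j))^{1/2}\,dt\ll_s(rj^{-1})^{s/2-2}(\alpha(\Lambda_j))^{1/2}$. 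Combined with the uniform bound $\alpha(\Lambda_j)\ll_s\sigma_1^{2s}(\det\mathbb C)^{-1}$ from \eqref{kth14}, summing $\sum_{j=2}^{\rho}j^{-1}(rj^{-1})^{s/2-2}\ll_s r^{s/2-2}$ (geometric-type series, convergent since $s/2-2\ge1/2>0$) yields $J\ll_s r^{s/2-2}\sigma_1^s(\det\mathbb C)^{-1/2}$, hence $r^{-s/2}J\ll_s r^{-2}\sigma_1^s(\det\mathbb C)^{-1/2}$, and adding the $(p\4N)^{-1}\sigma_1^s(\det\mathbb C)^{-1/2}$ term gives the bound for $I_0$ since $r^{-2}\asymp_s(N\4p)^{-1}$.

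For the large-$t$ range $\sigma_1^{-2}\le t\le c_2(s)\sigma_1^{-2}$, I would argue as in \eqref{ghgh}--\eqref{koren44}: here $|r\4t|$ is bounded below away from zero and above by a constant depending on $s$, so the lattice $\Xi_t$ (equivalently $\Omega_t$ via \eqref{att88}--\eqref{att89}) has its shortest vector controlled, forcing the $m$-component to vanish in the relevant counting set, and one gets directly $\E\operatorname{e}\{\langle\mathbb B_t\zeta_n,\zeta_n'\rangle\}\ll_s r^{-2s}|t|^{-s}(\det\mathbb C)^{-1}$. Integrating $\sqrt{\,\cdot\,}$ over a bounded range of $t$ against $dt/t$ gives $\ll_s r^{-2}\sigma_1^s(\det\mathbb C)^{-1/2}\ll_s(N\4p)^{-1}\sigma_1^s(\det\mathbb C)^{-1/2}$; this range is genuinely easier since $t$ is bounded below. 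Adding the two contributions and recalling $\sigma_1^s(\det\mathbb C)^{-1/2}\ge1$ completes the proof.

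The main obstacle I anticipate is getting the exponent arithmetic in the dyadic sum exactly right and confirming that the operator-norm factor $\norm{\ov{\mathbb D}_{rj^{-1}}}^{s/2-2}=(rj^{-1})^{s/2-2}$ produced by Lemma~\ref{GM} combines with the $1/j$ from the Jacobian of the substitution and with the lower endpoint $r^{-s/2}$ to yield precisely $r^{-2}$ and not merely $r^{-2+\varepsilon}$; this is where the dimension threshold $s\ge5$ is essential (it makes $s/2-2\ge1/2$, keeping $\sum_j j^{-1}(r/j)^{s/2-2}$ dominated by its $j\asymp 1$ term up to constants, rather than its $j\asymp r$ term). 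A secondary point requiring care is checking that the substitution $t=vj^{-2}$, $v=w+j$ really maps the interval $[j^{-1},(j-1)^{-1}]$ into $[j,j+2]$ in the $v$-variable for all $j\ge2$, and that the $\mathbb U_t\to\mathbb K_t$ comparison \eqref{kt6} and the passage to the block-diagonal form are valid uniformly for $|t|\le 2$, i.e. $|\theta|\le c^*=\arcsin(2/\sqrt5)<\pi/2$, so that $\cos\theta$ is bounded away from zero and the change of variables $t=\tan\theta$ introduces only an $s$-dependent constant.
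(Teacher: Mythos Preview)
Your overall approach matches the paper's exactly: the reduction via Lemmas~\ref{L7.3} and~\ref{L7.5} to the theta-series, the rewriting as $r^{-s}\alpha(\mathbb D_r\mathbb U_u\Lambda)$ via \eqref{koren4}, the dyadic decomposition of the $u$-integral over $[r^{-1},1]$ into blocks $I_j$, the substitution $t=vj^{-2}$, $v=w+j$, the passage $\mathbb U_t\to\mathbb K_t\to\wt{\mathbb K}_\theta$, and the application of Lemma~\ref{GM} with $\beta=1/2$ --- all of this is precisely what Section~\ref{s7} does, and your exponent bookkeeping (including the convergence of $\sum_j j^{-(s/2-1)}$ for $s\ge5$) is correct.

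There is, however, one genuine error in your handling of the range $\sigma_1^{-2}\le t\le c_2(s)\sigma_1^{-2}$. You invoke \eqref{ghgh}--\eqref{koren44}, claiming that ``$|r\4t|$ is bounded \ldots\ above by a constant depending on~$s$''. That is false: for $t\ge\sigma_1^{-2}$ one has $|r\4t|\ge r\sigma_1^{-2}$, and since $r\asymp_s(N\4p)^{1/2}$ is large, the hypothesis $|r\4t|\le c_s^*\sigma_1^{-2}$ of \eqref{ghgh} fails. The argument in \eqref{polk}, which forces $m=0$ by bounding $|r\4t|\,\|\mathbb V\ov m\|$ in terms of $r^{-1}\|\ov m\|$, does not go through here.

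The paper's ``in a similar way'' for \eqref{k16} means the following: after the change $u=\sigma_1^2\4t$, this range becomes $1\le u\le c_2(s)$, and one applies the \emph{same} $\mathbb U_u\to\mathbb K_u\to\wt{\mathbb K}_\theta$ passage (valid since $|u|\ll_s1$, cf.~\eqref{kt6}) and Lemma~\ref{GM} directly to $\mathbb D_r\mathbb U_u\Lambda$. No dyadic step is needed, since $u$ already lies in a bounded interval away from zero; this is essentially the ``$j=1$'' case of your decomposition. The required input is $\alpha(\Lambda)\ll_s\sigma_1^{2s}(\det\mathbb C)^{-1}$, which follows from $\|\mathbb V_0\|\ll_s1$ (so all successive minima of $\Lambda$ are $\ll_s1$) and $\det\Lambda=|\det\mathbb V_0|\asymp_s\sigma_1^{-2s}\det\mathbb C$ via Lemmas~\ref{Dav4} and~\ref{LLL}. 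This yields the same bound $r^{-s/2}\cdot r^{s/2-2}\sigma_1^s(\det\mathbb C)^{-1/2}=r^{-2}\sigma_1^s(\det\mathbb C)^{-1/2}$ as for~$I_0$.
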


\section*{Acknowledgments} We would like to thank V.V. Ulyanov
for helpful discussions,
and two anonymous referees for useful suggestions which allowed us to
improve the presentation.

% imsref loaded by akundreckaite, 2013-07-17 15:47:09
%
% imsref loaded by akundreckaite, 2013-07-18 08:00:57
%
% imsref loaded by akundreckaite, 2013-07-23 12:53:41

% zodis "Acknowledgments" paliekamas pagal autoriu

%suskaldyti doi

\printaddresses

\end{document}